\newcommand{\apref}[3]{\hyperref[#2]{#1\ref*{#2}#3}}
\theoremstyle{plain}
\newtheorem{prop}{Proposition}[section]
\newtheorem{lemma}[prop]{Lemma}
\newtheorem{thm}[prop]{Theorem}
\newtheorem{cor}[prop]{Corollary}
\theoremstyle{definition}
\newtheorem{conj}[prop]{Conjecture}
\theoremstyle{remark}
\DeclareMathOperator{\PSO}{PSO}
\DeclareMathOperator{\Det}{det}
\DeclareMathOperator{\sd}{sd}
\DeclareMathOperator{\Primlength}{PL}
\DeclareMathOperator{\weight}{wt}
\DeclareMathOperator{\base}{base}
\DeclareMathOperator{\PSL}{PSL}
\DeclareMathOperator{\PGL}{PGL}
\DeclareMathOperator{\diag}{diag}
\DeclareMathOperator{\Tr}{Tr}
\DeclareMathOperator{\tr}{tr}
\DeclareMathOperator{\Ima}{Im}
\DeclareMathOperator{\Rea}{Re}
\DeclareMathOperator{\Stab}{Stab}
\DeclareMathOperator{\bd}{bd}
\newcommand{\st}{\text{st}}
\newcommand\N{\mathbb{N}}
\newcommand\Q{\mathbb{Q}}
\newcommand\R{\mathbb{R}}
\newcommand\Z{\mathbb{Z}}
\newcommand\C{\mathbb{C}}
\newcommand{\h}{\mathbb{H}}
\newcommand{\mc}[1]{\mathcal #1}
\newcommand{\wt}{\widetilde}
\newcommand{\wh}{\widehat}
\DeclareMathOperator{\dvol}{dvol}
\DeclareMathOperator{\id}{id}
\DeclareMathOperator{\Fct}{Fct}
\newcommand{\sceq}{\mathrel{\mathop:}=}
\newcommand{\bmat}[4]{\begin{bmatrix} #1&#2\\#3&#4\end{bmatrix}}
\newcommand{\textbmat}[4]{\left[\begin{smallmatrix} #1&#2 \\ #3&#4
\end{smallmatrix}\right]}
\begin{document}

\title[Symbolic dynamics and the automorphic Laplacian]{Symbolic dynamics, automorphic functions, and Selberg zeta functions with unitary representations}
\author[A.\@ Pohl]{Anke D.\@ Pohl}
\address{Mathematisches Institut, Georg-August-Universit\"at G\"ottingen,  Bunsenstr. 3-5, 37073 G\"ottingen}
\curraddr{Max Planck Institute for Mathematics, Vivatsgasse 7,  53119 Bonn, Germany}
\email{pohl@mpim-bonn.mpg.de}
\subjclass[2010]{Primary: 37C30, 11M36, 11F12; Secondary: 37B10, 37D35, 37D40}
\keywords{automorphic functions, Selberg zeta function, unitary representation, geodesic flow, transfer operator, symbolic dynamics}
\begin{abstract} 
Using Hecke triangle surfaces of finite and infinite area as examples, we present techniques for thermodynamic formalism approaches to Selberg zeta functions with unitary finite-dimensional representations $(V,\chi)$ for hyperbolic surfaces (orbifolds) $\Gamma\backslash\h$ as well as transfer operator techniques to develop period-like functions for $(\Gamma,\chi)$-automorphic cusp forms. This leads to several natural conjectures. We further show how to extend these results to the billiard flow on the underlying triangle surfaces, and study the convergence of transfer operators along sequences of Hecke triangle groups.
\end{abstract}

\maketitle

\section{Introduction}

The intimate relation between the geometric and spectral properties of Riemannian locally symmetric spaces is of utmost interest in various areas, including dynamical systems, spectral theory, harmonic analysis, representation theory, number theory and quantum chaos, and contributes to their cross-fertilization. It is one reason for the increasing number of competing or complementary approaches via dynamical systems and ergodic theory on the one hand, and, e.g., harmonic analysis and analytic number theory on the other hand (see, for example,  \cite{Duke, ELMV} or \cite{Lindenstrauss_aque,Soundararajan,Watson}). Up to date, the full extent of this relation, its consequences and the properties of several entities on the geometric and the spectral side remain an active area of research. 

Over the last three decades, mostly for hyperbolic surfaces, an approach complementary to the classical analytic and number-theoretic methods emerged within the framework of the thermodynamic formalism of statistical mechanics: transfer operator techniques. These techniques focus on the dynamics of the geodesic flows rather than on the geometry of the surfaces, which allowed to establish results, a few mentioned below, hitherto unattained by any other method, or to provide alternative proofs to known results.  

One such class of examples are classical dynamical approaches to Laplace eigenfunctions, most notably Maass cusp forms, as well as representations of Selberg zeta functions as Fredholm determinants of transfer operator families, which allow us to show their meromorphic continuations. The modular surface 
\[
\PSL_2(\Z)\backslash\h = \PSL_2(\Z)\backslash\PSL_2(\R)/\PSO(2)
\]
had been the first instance for which such transfer operator approaches could be established. For this, Mayer \cite{Mayer_thermo, Mayer_thermoPSL} considered the Gauss map 
\[
 K\colon [0,1]\setminus\Q \to [0,1]\setminus\Q,\quad x \mapsto \frac1x \mod 1,
\]
which is well-known to derive from a symbolic dynamics for the geodesic flow on the modular surface \cite{Artin,Series}. He investigated its associated transfer operator $\mc L_{K,s}$ with parameter $s\in\C$, hence the operator
\[
 \mc L_{K,s}f(x) = \sum_{n\in\N} (x+n)^{-2s} f\left(\frac{1}{x+n}\right),
\]
and found a Banach space $\mc B$ on which, for $\Rea s>1/2$, the operator $\mc L_{K,s}$ acts, is nuclear of order $0$, and the map $s\mapsto \mc L_{K,s}$ has a meromorphic extension to all of $\C$ (which we also denote by $\mc L_{K,s}$). The Selberg zeta function $Z$ for the modular surface is then given as the product of the Fredholm determinants
\begin{equation}\label{Mayer_selberg}
Z(s) = \det(1-\mc L_{K,s})\det(1+\mc L_{K,s}).
\end{equation}
This provides an alternative proof of the meromorphic extension of the Selberg zeta function. Even more, it shows that the zeros of $Z$ are determined by the eigenfunctions with eigenvalue $\pm 1$ of $\mc L_{K,s}$ in $\mc B$. The natural question whether these eigenfunctions, for $\Rea s = 1/2$, actually characterize the (even/odd) Maass cusp forms for $\PSL_2(\Z)$ could be answered affirmately. Both Lewis--Zagier \cite{Lewis_Zagier} and Chang--Mayer \cite{Chang_Mayer_transop} showed that these eigenfunctions are bijective to highly regular solutions of the functional equation
\begin{equation}\label{lewiseq}
 f(x) = f(x+1) + (x+1)^{-2s} f\Big(\frac{x}{x+1}\Big), \quad x\in\R_{>0}
\end{equation}
and satisfy in addition a certain symmetry. Without using any Selberg theory, Lewis--Zagier \cite{Lewis_Zagier} continued to show that these solutions are in bijection with the even/odd Maass cusp forms, which justifies to call them period functions (in analogy to the period polynomials in Eichler--Shimura theory). In total, this transfer operator/thermodynamic formalism approach complements Selberg theory in the following sense: it provides a characterization of Maass cusp forms as eigenfunctions of transfer operators which are constructed using only the geodesic flow on the modular surface. Thus it gives a classical dynamical characterization of the Maass cusp forms themselves, not only of their spectral parameters. Moreover, by taking Fredholm determinants, it recovers the interpretation of (some of) the zeros of the Selberg zeta function.

Such kinds of results are quite sensitive to the choice of the discretization and symbolic dynamics for the geodesic flow on the considered hyperbolic surface $\Gamma\backslash\h$. For subgroups $\Gamma$ of $\PSL_2(\Z)$ of finite index they could be shown by ``twisting'' the $\PSL_2(\Z)$-setup with the unitary representation of $\PSL_2(\Z)$ induced from the trivial one-dimensional representation of $\Gamma$ \cite{Chang_Mayer_eigen, Chang_Mayer_extension, Deitmar_Hilgert}. In this way, the symbolic dynamics, the transfer operators and the functional equation get pushed from $\PSL_2(\Z)$ to $\Gamma$, and the representation essentially serves as a bookkeeping device for the cosets of $\Gamma\backslash\PSL_2(\Z)$. An alternative symbolic dynamics for $\PSL_2(\Z)$ was used in \cite{Bruggeman_Muehlenbruch}. All other used discretizations and symbolic dynamics only led (yet) to a representation of the Selberg zeta function as a Fredholm determinant \cite{Pollicott, Morita_transfer, Fried_triangle, 
Mayer_Muehlenbruch_Stroemberg}.

In \cite{Pohl_diss, Pohl_Symdyn2d} (see also \cite{Hilgert_Pohl}) we constructed discretizations for the geodesic flow on hyperbolic spaces $\Gamma\backslash\h$ for Fuchsian groups $\Gamma$ with at least one cusp and satisfying an additional (weak) geometric condition. These discretizations/symbolic dynamics are tailor-made for the requirements of transfer operator approaches to Maass cusp forms and Selberg zeta functions. They allowed us to develop the following redefinition of the structure of these approaches: We first construct a certain discrete dynamical system on the geodesic boundary of $\h$ and a certain symbolic dynamics on a \textit{finite} alphabet (we call these systems ``slow''). Hence the transfer operators associated to these systems have only finitely many terms, and their eigenfunctions are therefore obviously characterized by finite-term functional equations. If $\Gamma$ is a lattice, then the highly regular eigenfunctions with eigenvalue $1$ (period functions) are in bijection with the 
Maass cusp forms for $\Gamma$ \cite{Moeller_Pohl, Pohl_mcf_Gamma0p, Pohl_mcf_general}. 

Thus, as in the seminal approach for $\PSL_2(\Z)$, we provide classical dynamical characterizations of the Maass cusp forms. But in contrast we do not need to make a detour to the Selberg zeta function and then hope for a finite-term functional equation to pop out of an infinite-term transfer operator. Our transfer operators provide us in an immediate and natural way with the necessary functional equations. For $\PSL_2(\Z)$ our transfer operators are
\[
 \mc L_s f(x) = f(x+1) + (x+1)^{-2s} f\Big( \frac{x}{x+1} \Big).
\]
They obviously reproduce the Lewis--Zagier functional equation \eqref{lewiseq}.

These slow systems are not uniformly expanding for which reason the finite-term transfer operators are not nuclear and cannot represent the Selberg zeta function as a Fredholm determinant. To overcome this obstacle, we apply an acceleration/induction procedure to provide a uniformly expanding discrete dynamical system and symbolic dynamics, which necessarily uses an \textit{infinite} alphabet. We call these systems ``fast''. The associated infinite-term transfer operators then represent the Selberg zeta function. Also here we recover Mayer's transfer operator. In \cite{Moeller_Pohl} this induction procedure is performed for cofinite Hecke triangle groups, in this article for $\Gamma_0(2)$ respectively the Theta group, and in \cite{Pohl_hecke_infinite} for non-cofinite Hecke triangle groups. In a forthcoming article it will be shown for all admissible $\Gamma$.

In \cite{Pohl_hecke_spectral} we used the idea of these parallel, but closely related, fast/slow systems for the billiard flow on the triangle surface underlying the Hecke triangle surfaces. We showed how specific weights of these systems allow us to accommodate Dirichlet resp.\@ Neumann boundary value conditions, and in turn to geometrically separate odd and even Maass cusp forms. These results illuminate the factorization of the Selberg zeta function in \eqref{Mayer_selberg} and explain the additional symmetries needed for the solutions of the functional equation \eqref{lewiseq}.

Another class of examples of crucial input from transfer operators are studies of resonances for hyperbolic surfaces of infinite area. For Schottky surfaces (convex cocompact surfaces with no cusps and no elliptic points) the standard Markov symbolic dynamics gives rise to transfer operators which represent the Selberg zeta function. This representation is indispensable for proving the results on the distribution of resonances in \cite{Guillope_Lin_Zworski,Naud_resonancefree} and the (numerical) investigations of their fine structure in \cite{Borthwick_numerical, Weich}. Our construction of fast/slow systems also applies to non-cofinite Hecke triangle groups \cite{Pohl_hecke_infinite} (forthcoming work will extend it to other Fuchsian groups as well). We expect that the arising transfer operators now allow us to extend these results on resonances to infinite-area surfaces with cusps. Moreover, the transfer operators arising from the slow systems lead to natural conjectures on the residues at the resonances.

In this article we show how to generalize these parallel slow/fast transfer operator techniques to accommodate an arbitrary finite-dimensional unitary representation $(V,\chi)$ of the Fuchsian group $\Gamma$ under consideration. This provides us with thermodynamic formalism approaches to the Selberg zeta functions for the automorphic Laplacian with respect to $\chi$ (Section~\ref{fast}), and leads to several conjectures on period functions for $(\Gamma,\chi)$-automorphic cusp forms as well as, for infinite-area situations, to conjectures on the residues at the resonances (Section~\ref{slow}). 

We restrict ourselves here to cofinite and non-cofinite Hecke triangle groups. However, it is obvious that these techniques apply (in the same way) to other Fuchsian groups as well. The restriction to this class of Fuchsian groups has two reasons. Besides the necessity to keep this article to a reasonable length, Hecke triangle groups form a family containing cofinite and non-cofinite Fuchsian groups as well as arithmetic and non-arithmetic lattices which, in a certain sense, deform into each other. Further, the Phillips--Sarnak conjecture \cite{ Phillips_Sarnak_cuspforms, Phillips_Sarnak_weyl, Judge, Hillairet_Judge} states that even Maass cusp forms should not exist for generic cofinite Hecke triangle groups, whereas odd Maass cusp forms are known to exist in abundance. Both, the conjecture and the results in \cite{Judge, Hillairet_Judge}, are based on deformation theory. In Section~\ref{sequences} we therefore discuss the convergence of transfer operators along sequences of Hecke triangle groups.

It is well-known that any decomposition of the representation yields a corresponding factorization of the Selberg zeta functions. In Section~\ref{symmetryreduction}, we use our results to show that this decomposition already happens at the level of transfer operators and that the factorization of the zeta functions is merely a shadow of it.

All Hecke triangle groups commute with the matrix element 
\[
\bmat{-1}{0}{0}{1} \in \PGL_2(\R).
\]
Our constructions respect this symmetry in the sense that it is inherited to the discretizations, discrete dynamical systems and transfer operators. In Section~\ref{billiard} we briefly survey how to extend our results to transfer operator approaches for the billiard flow on the triangle surfaces underlying the Hecke triangle surfaces, how it can be used to distinguish the odd and even eigenfunctions and to provide separate dynamical zeta functions for different boundary conditions.

\section{Preliminaries}

\subsection{Hyperbolic geometry}

Throughout we use the upper half plane
\[
 \h \sceq \{ z\in\C \mid \Ima z > 0\}
\]
endowed with the well-known hyperbolic Riemannian metric given by the line element
\[
 ds^2 = \frac{dz d\overline{z}}{(\Ima z)^2} 
\]
as model for the hyperbolic plane. We identify its geodesic boundary with $P^1(\R) \cong \R \cup \{\infty\}$. The group of Riemannian isometries on $\h$ is isomorphic to $\PGL_2(\R)$, whose action on $\h$ extends continuously to $P^1(\R)$. Its subgroup of orientation-preserving Riemannian isometries is then $\PSL_2(\R)$, which acts by fractional linear transformations. Hence, for $\textbmat{a}{b}{c}{d}\in \PSL_2(\R)$ and $z\in\h\cup\R$ we have
\[
\bmat{a}{b}{c}{d}.z = 
\begin{cases}
\frac{az+b}{cz+d} & \text{for $cz+d\not=0$}
\\
\infty & \text{for $cz+d=0$} 
\end{cases}
\quad\text{and}\quad
\bmat{a}{b}{c}{d}.\infty = 
\begin{cases}
\frac{a}{c} & \text{for $c\not=0$}
\\
\infty & \text{for $c=0$.}
\end{cases}
\]
Let 
\begin{equation}\label{defQ}
 Q \sceq \bmat{0}{1}{1}{0} \in \PGL_2(\R).
\end{equation}
Then 
\[
 Q.z = \frac{1}{\overline z},\qquad Q.\infty = 0,
\]
and 
\[
 \PGL_2(\R) = \PSL_2(\R) \cup Q.\PSL_2(\R).
\]
Clearly, the action of $\PGL_2(\R)$ on $\h$ induces an action on the unit tangent bundle $S\h$ of $\h$.

\subsection{Hecke triangle groups}

Let $\lambda>0$. The subgroup of $\PSL_2(\R)$ generated by the two elements
\begin{equation}\label{generators}
 S\sceq \bmat{0}{1}{-1}{0}\quad\text{and}\quad  T_\lambda \sceq \bmat{1}{\lambda}{0}{1}
\end{equation}
is called the \textit{Hecke triangle group} $\Gamma_\lambda$ with parameter $\lambda$. It is Fuchsian (i.e., discrete) if and only if $\lambda \geq  2$ or $\lambda = 2\cos \frac{\pi}{q}$ with $q\in\N_{\geq 3}$. In the following we only consider Fuchsian Hecke triangle groups. A fundamental domain for $\Gamma_\lambda$ is given by (see Figure~\ref{funddoms})
\[
 \mc F_\lambda \sceq \{ z\in\h \mid |z|>1,\ |\Rea z|<\lambda/2 \}.
\]
\begin{figure}[h]
\begin{center}
\includegraphics{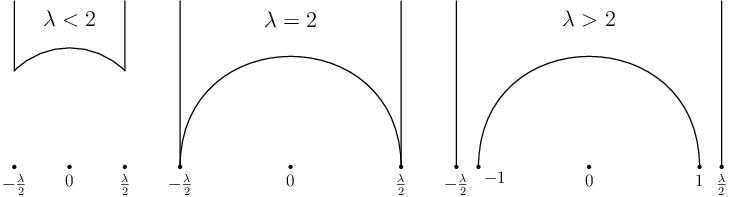}
\end{center}
\caption{Fundamental domain for $\Gamma_\lambda$.}
\label{funddoms}
\end{figure}

The side-pairings are provided by the generators \eqref{generators}: the vertical sides $\{ \Rea z = -\lambda/2\}$ and $\{ \Rea z = \lambda/2\}$ are identified via $T_\lambda$, and the bottom sides $\{ |z|=1,\, \Rea z\leq 0\}$ and $\{|z|=1,\, \Rea z\geq 0\}$ via $S$. The associated orbifold
\[
 X_\lambda \sceq \Gamma_\lambda\backslash\h
\]
is called a \textit{Hecke triangle surface}.

Among the Fuchsian Hecke triangle groups precisely those with parameter $\lambda\leq 2$ are lattices. For $\lambda = \lambda(q) =  2\cos\frac{\pi}{q}$ with $q\in\N_{\geq 3}$, $X_\lambda$ has one cusp (represented by $\infty$) and two elliptic points. In the special case $q=3$, thus $\lambda(q)=1$,  the Hecke triangle group $\Gamma_1$ is just the modular group $\PSL_2(\Z)$. The Hecke triangle group $\Gamma_2$ is called the \textit{Theta group}. It is conjugate to the projective version of $\Gamma_0(2)$. Hence $X_2$ has two cusps (represented by $\infty$ and $\lambda/2$) and one elliptic point. The groups $\Gamma_{\lambda}$ for $\lambda>2$ are non-cofinite, the orbifold $X_\lambda$ has one funnel (represented by the subset $[-\lambda/2,-1] \cup [1,\lambda/2]$ of $\R$), one cusp (represented by $\infty$) and one elliptic point.

For each Hecke triangle group $\Gamma_\lambda$ and each cuspidal point $c$ of $\Gamma_\lambda$, we let $\Stab(c,\Gamma_\lambda)$ denote the stabilizer group of $c$ in $\Gamma_\lambda$. As it is well-known, these are generated by a single (parabolic) element, say by $P_c$. We may choose $P_\infty=T_\lambda$, and for $\lambda=2$, $P_{-1} = \textbmat{2}{1}{-1}{0}$.

\subsection{Representations}

Let $\chi$ be a unitary representation of $\Gamma_\lambda$ on a finite-dimen\-si\-on\-al complex inner product space $V$.  Let $p\in\Gamma_\lambda$ be parabolic. We call $\chi$ \textit{singular in the cusp represented by} $p$ if 
\[
 \dim \ker(\chi(p) - 1) > 0,
\]
where $1$ is the identity operator on $V$. If $\chi$ is singular in at least one cusp, then we call  $\chi$ \textit{singular}. Otherwise we say $\chi$ is \textit{regular}. Further, we call
\[
 \sd(\chi) \sceq \max\{ \dim \ker(\chi(p) - 1) \mid \text{ $p\in\Gamma_\lambda$ parabolic}\}
\]
the \textit{degree of singularity} of $\chi$. We note that for $\lambda\not=2$, the degree of singularity is just
\[
 \sd(\chi) = \dim\ker(\chi(T_\lambda)-1),
\]
whereas for $\lambda=2$ it is
\[
 \sd(\chi) = \max\big\{ \dim\ker(\chi(T_2)-1), \dim\ker(\chi(P_{-1})-1) \big\}.
\]

\subsection{Automorphic functions, cusp forms, and resonances}
A function $f\colon \h \to V$ is called \textit{$(\Gamma_\lambda,\chi)$-automorphic} if 
\[
f(\gamma.z) = \chi(\gamma) f(z)
\]
for all $z\in\h$, $\gamma\in\Gamma_\lambda$. Let $C^\infty(X_\lambda;V;\chi)$ be the space of smooth ($C^\infty$) $(\Gamma_\lambda,\chi)$-automorphic functions which are bounded on the fundamental domain $\mc F_\lambda$, and let $C^\infty_c(X_\lambda;V;\chi)$ be its subspace of functions which are compactly supported on $\mc F_\lambda$. We endow $C^\infty_c(X_\lambda;V;\chi)$ with the inner product
\begin{equation}\label{innerproduct}
 (f_1,f_2) \sceq \int_{\mc F_\lambda} \langle f_1(z), f_2(z)\rangle \dvol(z)\quad \big(f_1,f_2\in C^\infty_c(X_\lambda;V;\chi)\big),
\end{equation}
where $\langle\cdot,\cdot\rangle$ is the inner product on $V$, and $\dvol$ is the hyperbolic volume form. Note that $\chi$ being unitary yields that the definitions of $C^\infty(X_\lambda;V;\chi)$ and $C^\infty_c(X_\lambda;V;\chi)$ as well as the inner product \eqref{innerproduct} do not depend on the specific choice of the fundamental domain $\mc F_\lambda$ for $X_\lambda$. As usual, let 
\[
\mc H \sceq L^2(X_\lambda; V; \chi)
\]
denote the completion of $C^\infty_c(X_\lambda;V;\chi)$ with respect to the inner product $(\cdot,\cdot)$ defined in \eqref{innerproduct}. The Laplace-Beltrami operator
\[
 \Delta = - y^2 (\partial_x^2 + \partial_y^2)
\]
extends uniquely (Friedrich's extension) from
\[
 \{ f\in C^\infty(X_\lambda;V;\chi) \mid \text{ $f$ and $\Delta f$ are bounded on $\mc F_\lambda$} \}
\]
to a self-adjoint nonnegative definite operator on $\mc H$, which we also denote by $\Delta = \Delta(\Gamma_\lambda;\chi)$. If $f\in \mc H$ is an eigenfunction of $\Delta$, say $\Delta f = \mu f$, we branch its eigenvalue as $\mu=s(1-s)$ and call $s$ its \textit{spectral parameter}.

This branching is also useful when considering the resolvent of $\Delta$, hence the map
\[
R(s) \sceq R(s;\Gamma_\lambda;\chi) \sceq \big( \Delta - s(1-s) \big)^{-1}, \quad \Rea s > 1.
\]
Here, $s(1-s)$ is understood as the operator $s(1-s)\id_V$. This map, as a function of $s$, admits a meromorphic continuation to all of $\C$. Its poles are called \textit{resonances}.

For cofinite Hecke triangle groups $\Gamma_\lambda$, $\lambda \leq 2$, the \textit{cusp (vector) forms} in $\mc H$ are of particular importance, i.e., those $L^2$-eigenfunctions that decay rapidly towards any cusp of $X_\lambda$. More precisely, let 
\[
 V_\infty\sceq \{ v\in V \mid \chi(T_\lambda)v=v\}
\]
be the subspace of $V$ which consists of the vectors fixed by the stabilizer group $\Stab(\infty,\Gamma_\lambda)$ of the cusp $\infty$, and, for $\lambda=2$, let 
\[
 V_{-1}\sceq \{ v\in V \mid \chi(P_{-1})v=v\}
\]
be the subspace of $V$ which consists of the vectors fixed by the stabilizer group $\Stab(-1,\Gamma_\lambda)$ of the second cusp $-1$. Then an element $f\in \mc H$ is called a $(\Gamma_\lambda,\chi)$-\textit{cusp form} if $f$ is an eigenfunction of $\Delta$ and 
\[
 \int_0^\lambda \langle f(x+iy), v\rangle dx = 0
\]
for all $y>0$ and all $v\in V_\infty$, and, for $\lambda=2$, also
\[
 \int_0^{\frac{1}{2}} \left\langle f\left( \textbmat{1}{1}{-1}{0}.(x+iy)\right), v\right\rangle dx = 0
\]
for all $y>0$ and all $v\in V_{-1}$. We note that these conditions are void if $\chi$ is regular. We further note that for the trivial one-dimensional representation $(\C,\id)$, the $(\Gamma_\lambda,\id)$-cusp forms are the classical Maass cusp forms. 

\subsection{Selberg zeta functions}
Let $\Lambda(\Gamma_\lambda)$ denote the limit set of $\Gamma_\lambda$ and $\delta \sceq \dim\Lambda(\Gamma_\lambda)$ its Hausdorff dimension. The classical dynamical Selberg zeta function for $\Gamma_\lambda$ is defined by
\[
 Z(s) \sceq \prod_{\ell\in\Primlength} \prod_{k=0}^\infty \left( 1 - e^{-(s+k)\ell}\right),\quad \Rea  s > \delta,
\]
where $\Primlength$ denotes the primitive geodesic length spectrum of $X_\lambda$ with multiplicities. The Selberg zeta function for the automorphic Laplacian with nontrivial representation is best stated in its algebraic form, taking advantage of the bijection between periodic geodesics on $X_\lambda$ and hyperbolic conjugacy classes in $\Gamma_\lambda$. 

Let $g\in\Gamma_\lambda$. We denote its conjugacy class in $\Gamma_\lambda$ by $[g]$. Let $[\Gamma_\lambda]_h$ denote the set of all conjugacy classes of hyperbolic elements in $\Gamma_\lambda$. Further, if $g$ is not the identity element, then there is a maximal $n(g)\in\N$ such that $g= h^{n(g)}$ for some $h\in\Gamma_\lambda$.We call $g$ \textit{primitive} if $n(g) = 1$. We let $[\Gamma_\lambda]_p$ denote the set of all conjugacy classes of primitive hyperbolic elements. If $g$ is hyperbolic, then its \textit{norm} $N(g)$ is defined to be the square of its eigenvalue with larger absolute value. 

The Selberg zeta function for $(\Gamma_\lambda,\chi)$ is now defined by
\[
 Z(s,\chi) = \prod_{[g]\in [\Gamma_\lambda]_p} \prod_{k=0}^\infty \det\left( 1 - \chi(g) N(g)^{-(s+k)}\right), \quad\Rea s > \delta.
\]
Through analytic and number theoretic methods, it is known that $Z$ admits a meromorphic continuation to all of $\C$. We will provide an alternative proof in Section~\ref{fast} below. Moreover, it is known that the resonances and spectral parameters of $\Gamma_\lambda$ are contained in the zeros of $Z$.

\subsection{Special properties of Hecke triangle groups}

The Hecke triangle groups form a family that satisfies some special properties. They consist not only of a mixture of lattices and non-lattices which, in a certain sense, converge to each other, but also they mix arithmetic and non-arithmetic lattices.
Among the cofinite Hecke triangle groups $\Gamma_\lambda$ only those for $\lambda = 2\cos\frac{\pi}{q}$ with $q\in\{3,4,6\}$ and $q=\infty$, thus $\lambda = 2$, are arithmetic. 

The Phillips--Sarnak conjecture \cite{Phillips_Sarnak_cuspforms, Phillips_Sarnak_weyl} states that for generic lattices one expects the space of Maass cusp forms to be finite-dimensional. Hecke triangle groups clearly commute with the element $Q$ from \eqref{defQ} or, more obviously from Figure~\ref{funddoms}, with
\[
 J = \bmat{-1}{0}{0}{1}\colon z \mapsto -\overline z.
\]
This allows us to separate the odd and the even spectrum. Recall that a Maass cusp form $f$ is called \textit{odd} if $f(J.z) = -f(z)$, and \textit{even} if $f(J.z) = f(z)$. It is well-known that for any cofinite Hecke triangle groups, a Weyl law holds for the odd Maass cusp forms. In stark contrast, the results in \cite{Judge, Hillairet_Judge} in combination with the Phillips--Sarnak conjecture suggest  that even Maass cusp forms generically should not exist.

\section{Weighted discretizations and transfer operators}

\subsection{Cross sections}
We call a subset $\wh C$ of $SX_\lambda = \Gamma_\lambda\backslash S\h$ a \textit{cross section} for the geodesic flow on $X_\lambda$ if and only if the intersection between any geodesic and $\wh C$ is discrete in space and time, and each \textit{periodic} geodesic intersects $\wh C$ (infinitely often).

The Selberg zeta function only involves the length spectrum of the periodic geodesics, and heuristically, the Laplace eigenfunctions are determined by the (statistics of the) periodic geodesics only. Therefore, for our applications it is sufficient and even crucial to use this relaxed notion of cross section.

A \textit{set of representatives} for $\wh C$ is a subset $C'$ of $S\h$ such that the canonical quotient map $\pi\colon S\h \to SX_\lambda$ induces a bijection $C'\to \wh C$. For any $\wh v\in SX_\lambda$ let $\wh\gamma_{\wh v}$ denote the geodesic on $X_\lambda$ determined by 
\begin{equation}\label{determine}
 \wh\gamma'_{\wh v}(0) = \wh v.
\end{equation}
The \textit{first return map} of a cross section $\wh C$ is given by 
\[
 R\colon \wh C \to \wh C,\quad \wh v \mapsto \wh\gamma'_{\wh v}(t(\wh v)),
\]
whenever the \textit{first return time}
\[
 t(\wh v) \sceq \min\{ t>0 \mid \wh\gamma'_{\wh v}(t) \in \wh C\}
\]
exists. 

To characterize the set of unit tangent vectors for which the existence of the first return time is potentially problematic, let $\wh{\mc V}$ denote the set of geodesics on $X_\lambda$ which converge in the backward or forward time direction to a cusp or a funnel of $X_\lambda$. This means that the geodesic $\wh\gamma$ belongs to $\wh{\mc V}$ if and only if $\wh\gamma(\infty)$ or $\wh\gamma(-\infty)$ is ``in'' a cusp or a funnel. Let $\wh{\mc V}_f$ be the subset of geodesics on $X_\lambda$ which converge to a cusp or funnel in the forward time direction. Let $T^1\wh{\mc V}_f$ denote the set of unit tangent vectors which determine the geodesics in $\wh{\mc V}_f$, and let $T^1\mc V_f \sceq \pi^{-1}(T^1\wh{\mc V}_f)$. For $v\in S\h$ let $\gamma_v$ denote the geodesic on $\h$ determined by $\gamma'_v(0) = v$, and let $\base(v)\in\h$ denote the base point of $v$.

Let 
\[
 \bd \sceq \{ \gamma_v(\infty) \mid v\in T^1\mc V_f\} \quad \subseteq \R \cup \{\infty\}
\]
be the ``boundary part'' of the geodesic boundary of $\h$, that is the set of forward time endpoints of the geodesics determined by the elements in $T^1\mc V_f$. The set
\[
 \big(P^1(\R) \times \bd\big) \cup \big(\bd \times P^1(\R)\big)
\]
coincides with the set of endpoints $(\gamma(\infty),\gamma(-\infty))$ of the geodesics $\gamma$ in $\pi^{-1}(\wh{\mc V})$. Note that if the geodesic $\wh\gamma$ is contained in $\wh{\mc V}$ but not in $\wh{\mc V}_f$, then the reversely oriented geodesic is contained in $\wh{\mc V}_f$.

For any subset $I$ of $\R$ we set
\[
 I_\st \sceq I \setminus \bd.
\]

The cross sections $\wh C$ we constructed in \cite{Pohl_Symdyn2d} satisfy several special properties. They are not intersected at all by the geodesics in $\wh{\mc V}$ but by each other geodesic infinitely often both in backward and forward time direction. Therefore its first return map is defined everywhere. They have a set of representatives $C'$ which decomposes into finitely many sets $C'_\alpha$, $\alpha\in A$, each one consisting of a certain ``fractal-like'' set of unit tangent vectors whose base points form a vertical geodesic arc on $\h$ and all of who point into the same half space determined by this geodesic arc. More precisely, each $C'_\alpha$ is of the form
\[
 C'_\alpha = \{ v\in S\h \mid \Rea \base(v) = x_\alpha,\ \gamma_v(\infty) \in I_{\alpha,\st},\ \gamma_v(-\infty)\in \R_\st\setminus I_{\alpha,\st} \},
\]
where $I_{\alpha} = (x_\alpha,\infty)$ or $I_{\alpha} = (-\infty,x_\alpha)$ for some point $x_\alpha\in\R$ which corresponds to a cusp or funnel endpoint of $\Gamma_\lambda$.

Via the map $\tau\colon \wh C \to \R\times A$,
\[
 \tau(\wh v) \sceq (\gamma_v(\infty), \alpha) \qquad \text{for $v=\pi^{-1}(\wh v) \cap C'_\alpha$},
\]
the first return map $R\colon \wh C \to \wh C$ induces a discrete dynamical system $(D, F)$ on 
\[
 D = \bigcup_{\alpha\in A} I_{\alpha, \st}\times \{\alpha\}.
\]
The special structure of the sets $C'_\alpha$ implies that $F$ decomposes into finitely many \textit{submaps} (bijections) of the form 
\[
 \big( I_{\alpha,\st} \cap g^{-1}_{\alpha,\beta}. I_{\beta,\st}\big) \times \{\alpha\} \to I_{\beta,\st} \times \{\beta\},\quad (x,\alpha) \mapsto (g_{\alpha,\beta}.x, \beta),
\]
where $\alpha,\beta\in A$ and $g_{\alpha,\beta}$ is a specific element in $\Gamma_\lambda$. For any $v\in C'_\alpha$ there is a first future intersection between $\gamma_v(\R_{>0})$ and $\Gamma.C'$, say on $g_{\alpha,\beta}.C'_\beta$, which completely determines these submaps.

\subsection{Weights and weighted transfer operators}\label{sec:TO}

We use the representation $(V,\chi)$ of $\Gamma_\lambda$ to endow the discrete dynamical system $(D,F)$ with weights by adding to each submap a (local) weight:
\[
 \big( I_{\alpha,\st} \cap g^{-1}_{\alpha,\beta}. I_{\beta,\st}\big) \times \{\alpha\} \to I_{\beta,\st} \times \{\beta\},\quad (x,\alpha) \mapsto (g_{\alpha,\beta}.x, \beta), \quad \weight: \chi(g_{\alpha,\beta}^{-1})
\]
for all $\alpha,\beta\in A$. If $x\in ( I_{\alpha,\st} \cap g^{-1}_{\alpha,\beta}. I_{\beta,\st}) \times \{\alpha\}$, then we say that $x$ has the weight $\weight(x) = \chi(g_{\alpha,\beta}^{-1})$.

Given such a weighted discrete dynamical system $(D,F,\chi)$, the associated weighted transfer operator $\mc L_{s}$ with parameter $s\in\C$ is (formally) given by
\[
 \mc L_{s} f(x) \sceq \sum_{y\in F^{-1}(x)} \weight(y) |F'(y)|^{-s} f(y),
\]
acting on an appropriate space of functions $f\colon D\to V$ (to be adapted to the system and applications under consideration).

Due to the special form of our weighted systems, we can deduce a more explicit form for the transfer operators. To that end 
we set
\[
 j_s(g,x) \sceq \big( |\det g|\cdot (cx+d)^{-2} \big)^s
\]
for $s\in\C$, $g=\textbmat{a}{b}{c}{d}\in \PGL_2(\R)$ and $x\in\R$. Moreover, for a function $f\colon U\to V$ on some subset of $U$ of $P^1(\R)$, we define
\[
 \tau_s(g^{-1})f(x) \sceq \tau^V_s(g^{-1})f(x) \sceq j_s(g,x) f(g.x),
\]
whenever this makes sense. If $f$ is a function defined on sets of the form $(I_{\alpha,\st} \cap g^{-1}_{\alpha,\beta}. I_{\beta,\st}) \times \{\alpha\}$, then the action only takes place on $I_{\alpha,\st} \cap g^{-1}_{\alpha,\beta}. I_{\beta,\st}$, and the bit $\alpha$ changes according to its change in the submap. Further, we extend this definition to involve the representation by
\[
 \alpha_s(g) \sceq \chi(g^{-1})\tau_s^V(g),
\]
hence
\[
 \alpha_s(g)f(x) = j_s(g^{-1},x) \chi(g^{-1})\left(f(g^{-1}.x)\right).
\]
Then the (formal) transfer operator $\mc L_{s}$ becomes
\[
 \mc L_{s} f = \sum_{\alpha,\beta\in A} 1_{F(D_{\alpha,\beta})} \cdot \alpha_s(g_{\alpha,\beta})\big(f\cdot 1_{D_{\alpha,\beta}}\big),
\]
where $1_E$ denotes the characteristic function of the set $E$, and $D_{\alpha,\beta} \sceq (I_{\alpha,\st} \cap g^{-1}_{\alpha,\beta}. I_{\beta,\st}) \times \{\alpha\}$.

If $\Gamma_\lambda = \PSL_2(\Z)$ and $(V,\chi)$ is the representation that is induced from the trivial one-dimensional representation of some finite-index subgroup $\Lambda$ of $\PSL_2(\Z)$, then our weights reproduce the usage of $(V,\chi)$ in  \cite{Chang_Mayer_eigen, Chang_Mayer_extension,Deitmar_Hilgert}. There this specific representation was used to provide a clean bookkeeping tool of the cosets $\Lambda\backslash\PSL_2(\Z)$ to push the discretizations, transfer operators and period functions from $\PSL_2(\Z)$ to $\Lambda$. Our usage as weights directly in the submaps now allows us to accommodate more general representations.

\section{The slow systems and automorphic forms}\label{slow}

In the following we recall the cross sections from \cite{Pohl_Symdyn2d} and state the associated weighted discrete dynamical systems. Each of the submaps 
\[
 I_\st \times \{\alpha\} \to g.I_\st \times \{\beta\},\ (x,\alpha) \mapsto (g.x,\beta),\ \weight: \chi(g^{-1})
\]
of these systems can be continued to an analytic map on the ``analytic hull'' of $I_\st$, that is the minimal interval $I$ in $\R$ such that $I\setminus\bd$ coincides with $I_\st$.  The associated weighted transfer operators are here considered to act on functions defined on the analytic hulls.

If $\Gamma_\lambda$ is cofinite and $(V,\chi)$ is the trivial one-dimensional representation of $\Gamma_\lambda$, then we proved in \cite{Moeller_Pohl, Pohl_mcf_Gamma0p, Pohl_mcf_general} that the highly regular eigenfunctions with eigenvalue $1$ of the transfer operators are isomorphic to the Maass cusp forms. We expect this to hold for more general automorphic cusp forms. Moreover, for the non-cofinite Hecke triangle groups, we expect an intimate relation between the transfer operators and residues at resonances.

We divide our considerations into three classes. The first class consists of all cofinite Hecke triangle groups with parameter $\lambda < 2$. These are, in terms of complexity, the easiest ones. The cross sections for all these lattices consist of only one component, and they are almost identical. The only difference derives from the different sets of cuspidal points $\Gamma_\lambda.\infty$. The arising discrete dynamical systems are easy stated uniformly, even though they decompose into a different number of submaps, depending on the order of the elliptic point.

The second class consists of the Theta group $\Gamma_2$. The presence of two non-equivalent cusps and an elliptic point yields that the set of representatives for the cross section decomposes into three components. One immediately sees that only two components would be sufficient for a cross section. However, all three components are needed for a clean statement of the period functions  (i.e., the eigenfunctions of the transfer operator) for Maass cusp forms and in particular their regularity conditions.  

After surveying this result and its existing and expected generalizations to more general automorphic cusp forms, we investigate the reduced cross section for $\Gamma_2$, the relation between the arising transfer operators from the two cross sections, and provide the characterization of Maass cusp forms in this second system. More importantly, we will see that this reduced cross section is very similar to the cross sections for the non-cofinite Hecke triangle groups, our third class. As we will see, this similarity allows us to prove a convergence of transfer operators along a sequence of Hecke triangle groups.

Throughout we omit some dependencies from the notation, and we will use $(D,F)$ to denote any discrete dynamical system.

\subsection{Hecke triangle groups with parameter $\lambda<2$}\label{cross1}

Let $\Gamma = \Gamma_\lambda$ be a cofinite Hecke triangle group with parameter $\lambda=\lambda(q) = 2\cos\frac{\pi}{q}$, $q\in\N_{\geq 3}$. The set of representatives $C'$ in \cite{Pohl_Symdyn2d} for the cross section $\wh C$ of the geodesic flow on $\Gamma\backslash\h$ is 
\[
 C'\sceq \{ v\in S\h \mid \Rea\base(v) = 0,\ \gamma_v(-\infty) \in (-\infty,0)_\st,\ \gamma_v(\infty) \in (0,\infty)_\st \}.
\]
For $k\in\Z$ let
\[
 g_k \sceq  \big((TS)^k S\big)^{-1}.
\]
The induced discrete dynamical system $(D,F)$ is defined on $D=(0,\infty)_\st$ and decomposes, as it can be read off from Figure~\ref{Hecke_cof}, 
\begin{figure}[h]
\begin{center}
\includegraphics{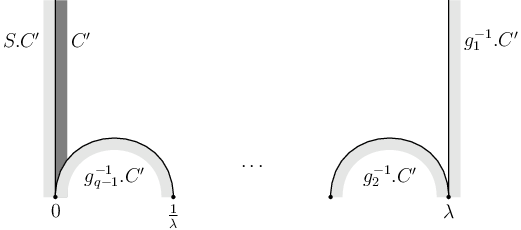}
\end{center}
\caption{Cross section and future intersections for $\Gamma_\lambda$, $\lambda<2$.}
\label{Hecke_cof}
\end{figure}
into the $q-1$ weighted submaps 
\[
 (g_k^{-1}.0, g_k^{-1}.\infty)_\st \to (0,\infty)_\st,\ \ x\mapsto g_k.x,\ \ \weight: \chi(g_k^{-1}),
\]
where $k=1,\ldots, q-1$.
The associated transfer operator $\mc L_{s,\chi}$ acts on $\Fct(\R_{>0};V)$ via
\[
 \mc L_{s,\chi} =  \sum_{k=1}^{q-1} \alpha_s(g_k).
\]

\begin{thm}[\cite{Moeller_Pohl}]\label{MP_slow}
Let $(V,\chi) = (\C,\id)$ be the trivial one-dimensional representation of $\Gamma_\lambda$ and let $s\in\C$, $\Rea s \in (0,1)$. Then the space of Maass cusp forms for $\Gamma_\lambda$ with spectral parameter $s$ is isomorphic to the space of eigenfunctions $f\in C^\omega(\R_{>0};\C)$ of $\mc L_{s,\chi}$ with eigenvalue $1$ for which the map
\begin{equation}\label{extends}
 \begin{cases}
  f & \text{on $\R_{>0}$}
\\
-\tau_s(S)f & \text{on $\R_{<0}$}
 \end{cases}
\end{equation}
extends smoothly to all of $\R$. If $u$ is a Maass cusp forms with spectral parameter $s$, then the associated eigenfunction $f\in C^\omega(\R_{>0};\C)$ is given by
\[
 f(t) = \int_0^{i\infty} [u, R(t,\cdot)^s],
\]
where the integration is along any path in $\h$ from $0$ to $i\infty$, and $R(t,z) \sceq \Ima\left(\frac{1}{t-z}\right)$, and
\[
 [u,v] = \frac{\partial u}{\partial z} \cdot v dz + u\cdot \frac{\partial v}{\partial\overline z} d\overline z. 
\]
\end{thm}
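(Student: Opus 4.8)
The plan is to establish the isomorphism in two directions, constructing explicit maps between Maass cusp forms and period functions, and then checking they are mutually inverse. The construction of the map from Maass cusp forms to eigenfunctions is dictated by the statement itself: given a Maass cusp form $u$ with spectral parameter $s$, one sets $f(t) = \int_0^{i\infty}[u,R(t,\cdot)^s]$. The first task is to verify that this integral is well-defined, i.e.\ that the closed one-form $[u,R(t,\cdot)^s]$ is indeed closed (this is where the eigenvalue equation $\Delta u = s(1-s)u$ enters, via a direct computation showing $d[u,R(t,\cdot)^s]=0$ using that $R(t,z)^s$ is, up to normalization, the Poisson kernel to the power $s$ and hence also a $\Delta$-eigenfunction with the same eigenvalue), and that the integral converges and is path-independent — the latter following from closedness together with the rapid decay of $u$ towards the cusps, which handles the behavior near $0$ and $i\infty$. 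This integral representation is exactly the one introduced by Lewis--Zagier and generalized in \cite{Moeller_Pohl}; I would cite their computations for the analyticity of $t\mapsto f(t)$ on $\R_{>0}$.

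Next I would verify that $f$ satisfies $\mc L_{s,\chi}f = f$ and the extension/regularity condition \eqref{extends}. The transfer operator equation should follow from the cocycle property of $j_s$ together with the geometry encoded in the submaps: the sum over $k=1,\dots,q-1$ of the pieces $\alpha_s(g_k)f$ corresponds, under the integral representation, to breaking the path from $0$ to $i\infty$ according to the action of the elements $g_k=((TS)^kS)^{-1}$, using the $(\Gamma_\lambda,\id)$-automorphy of $u$ (which here is just $\Gamma_\lambda$-invariance) to re-express $\int_{g_k^{-1}.0}^{g_k^{-1}.\infty}$ as a contribution to $f(t)$. The key algebraic identity is that the intervals $(g_k^{-1}.0, g_k^{-1}.\infty)$ for $k=1,\dots,q-1$ tile $(0,\infty)$, which mirrors the decomposition of the cross section in Figure~\ref{Hecke_cof}. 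The smoothness of the function in \eqref{extends} across $0$ — i.e.\ that $f$ on $\R_{>0}$ and $-\tau_s(S)f$ on $\R_{<0}$ glue to a $C^\infty$ (indeed real-analytic away from $0$, and smooth at $0$) function on $\R$ — should come from the fact that $\int_0^{i\infty}[u,R(t,\cdot)^s]$ actually makes sense for $t$ in a neighborhood of any point of $\R\setminus\{0,\infty\}$ after suitably deforming the path and using $\Gamma_\lambda$-invariance (in particular $S$-invariance, since $S.0 = \infty$), so that a single analytic object restricts to both branches.

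For the reverse direction, given an eigenfunction $f\in C^\omega(\R_{>0};\C)$ with $\mc L_{s,\chi}f = f$ and the regularity of \eqref{extends}, I would reconstruct a Maass cusp form. Following \cite{Moeller_Pohl} (and the $\PSL_2(\Z)$-prototype of Lewis--Zagier), the idea is to build a $\Gamma_\lambda$-invariant eigenfunction of $\Delta$ on $\h$ by an integral transform of $f$ against $R(t,z)^s$, patching over a fundamental domain using the functional equation to guarantee well-definedness on overlaps, and then to check the cusp-form decay condition; the regularity hypothesis on \eqref{extends} is precisely what is needed to control the behavior near the cusp and to ensure the resulting automorphic function is genuinely $L^2$ and rapidly decaying rather than merely an eigenfunction of the invariant Laplacian in a formal sense. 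Finally I would check that the two constructions are inverse to each other, which is essentially a Fubini-type interchange of integrations combined with an explicit evaluation of $\int R(t,z)^s$-type kernels.

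The main obstacle I expect is the reverse direction — showing that an abstract highly-regular eigenfunction of $\mc L_{s,\chi}$ genuinely comes from a cusp form, and in particular establishing the decay/regularity of the reconstructed automorphic function near the cusp $\infty$. This is the step where the precise meaning of ``highly regular'' (the smoothness of \eqref{extends} at $0$, equivalently at the cusp after applying $S$) has to be leveraged carefully; in the $\PSL_2(\Z)$ case this is the heart of the Lewis--Zagier analysis, and for cofinite Hecke triangle groups it is the content of \cite{Moeller_Pohl}. Since this theorem is attributed to \cite{Moeller_Pohl}, the honest statement of the plan is that the forward direction is a guided computation, while the reverse direction is where one must invoke (or reprove) the substantial regularity analysis of that paper.
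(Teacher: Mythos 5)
The paper itself contains no proof of this theorem: it is imported verbatim from \cite{Moeller_Pohl}, and the only thing the surrounding text reveals about the proof is that it ``makes crucial use of the characterization of Maass cusp forms in parabolic cohomology by Bruggeman, Lewis and Zagier \cite{BLZ_part2}.'' That one sentence already shows that your plan and the cited proof diverge on the decisive step. Your forward direction (closedness of $[u,R(t,\cdot)^s]$ from the eigenvalue equation, convergence and path-independence from cuspidal decay, the functional equation $\mc L_{s,\chi}f=f$ from cutting the path at the points $g_k^{-1}.0$, $g_k^{-1}.\infty$ and using $\Gamma_\lambda$-invariance, the extension across $0$ from path deformation and $S.0=\infty$) is the standard computation and is fine. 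But the reverse direction in \cite{Moeller_Pohl}/\cite{Pohl_mcf_general} does \emph{not} reconstruct $u$ by an integral transform of $f$ against $R(t,z)^s$ patched over a fundamental domain. Instead, the highly regular $1$-eigenfunctions of the finite-term transfer operator are identified with parabolic $1$-cocycles with values in a principal-series model, and the passage back to Maass cusp forms is then the content of the Bruggeman--Lewis--Zagier isomorphism between cusp forms and smooth parabolic cohomology; the smoothness condition \eqref{extends} at $0$ is exactly the parabolicity/smoothness condition on the cocycle at the cusp.

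This matters because the route you propose --- the direct Lewis--Zagier reconstruction, ``Fubini-type interchange of integrations combined with an explicit evaluation of $\int R(t,z)^s$-type kernels'' --- is only known to work for $\PSL_2(\Z)$, where it exploits arithmetic-specific manipulations (explicit Hurwitz/Lerch zeta identities, term-by-term analysis of the three-term equation). For $\lambda=2\cos(\pi/q)$ with $q\neq 3$ these tools are not available, and no direct analytic inversion of the period-function transform is known; that is precisely why the cohomological detour is taken. You do hedge by saying one must ``invoke (or reprove) the substantial regularity analysis of that paper,'' which is honest, but as written the black box you invoke is not the analysis that the cited proof actually performs: what must be invoked is the BLZ parabolic-cohomology theorem plus the transfer-operator-to-cocycle dictionary, not a bootstrapped regularity argument for a directly reconstructed automorphic function. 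So the proposal is a correct and complete plan for the forward map, but the reverse direction as you describe it would not go through for general $\lambda$, and the genuinely different (cohomological) mechanism that replaces it is the substance of the theorem.
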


In number theoretical terms, the space of eigenfunctions of $\mc L_{s,\chi}$ in Theorem~\ref{MP_slow} constitutes period functions for the Maass cusp forms. Let $u$ be a Maass cusp form with corresponding period function $f$. The condition that $f$ is a $1$-eigenfunction of $\mc L_{s,\chi}$ corresponds to $u$ being an eigenfunction of the Laplace--Beltrami operator, and the regularity requirement $f\in C^\omega(\R_{>0};\C)$ corresponds to $u$ being real-analytic. To model the property that $u$ is rapidly decaying towards the cusp on the side of period functions, we first need to extend the period functions to almost all of $\R$ (actually of $P^1(\R)$). In Figure~\ref{Hecke_cof} we see that $S.C'$ is ``opposite'' to $C'$ in the sense that $S.C'\cup C'$ is disjoint and almost coincides with the complete unit tangent spaces at $i\R_{>0}$. Further, $S.C'$ is conversely oriented to $C'$. Therefore, $f$ is extended by $-\tau_s(S)f$ in \eqref{extends}. Now the cusp is represented by $0$, for which 
reason we need 
to request 
that \eqref{extends} is smooth at $0$. Since $\infty$ represents the same cusp, there is no additional requirement at $\infty$.

The proof of Theorem~\ref{MP_slow}, for which we refer to \cite{Moeller_Pohl} or \cite{Pohl_mcf_general}, makes crucial use of the characterization of Maass cusp forms in parabolic cohomology by Bruggeman, Lewis and Zagier \cite{BLZ_part2}. Such a characterization is not (yet) available for $(\Gamma_\lambda,\chi)$-automorphic cusp forms with nontrivial representation. However, for representations of $\PSL_2(\Z)$ that are induced from the trivial one-dimensional representation of a finite index subgroup, Deitmar and Hilgert \cite{Deitmar_Hilgert} proved an analogue of Theorem~\ref{MP_slow} using hyperfunction theory. We expect that results analogous to Theorem~\ref{MP_slow} are true in much more generality.

\begin{conj}
Let $(V,\chi)$ be any unitary finite-dimensional representation of $\Gamma_\lambda$, and $\Rea s \in (0,1)$. Then there is a bijection between the $(\Gamma_\lambda,\chi)$-automorphic cusp forms with spectral parameter $s$ and the $1$-eigenfunctions of the transfer operator $\mc L_{s,\chi}$ of the same regularity as in Theorem~\ref{MP_slow}.
\end{conj}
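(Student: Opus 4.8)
The plan is to prove the two directions of the asserted bijection separately, following the architecture of Theorem~\ref{MP_slow} but systematically replacing scalar-valued objects by $V$-valued ones and inserting the cocycle $\chi$ at every step. For the direction from automorphic forms to period functions, given a $(\Gamma_\lambda,\chi)$-automorphic cusp form $u\colon\h\to V$ with $\Delta u = s(1-s)u$, I would set $f(t)=\int_0^{i\infty}[u,R(t,\cdot)^s]$ exactly as in Theorem~\ref{MP_slow}, now with $[u,v]=\frac{\partial u}{\partial z}\,v\,dz+u\,\frac{\partial v}{\partial\overline z}\,d\overline z$ the $V$-valued closed $1$-form (closedness being equivalent to $u$ being a $\Delta$-eigenfunction), so that path-independence yields $f\in C^\omega(\R_{>0};V)$. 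The $\chi$-equivariance $u(\gamma.z)=\chi(\gamma)u(z)$, combined with the decomposition of the boundary ray $(0,\infty)$ into the images $g_k^{-1}.(0,\infty)$ under $g_k=((TS)^kS)^{-1}$, turns the additivity of the integral along concatenated $\Gamma_\lambda$-translated paths into the identity $\mc L_{s,\chi}f=\sum_{k=1}^{q-1}\alpha_s(g_k)f=f$; here one uses $\alpha_s(g)f(x)=j_s(g^{-1},x)\chi(g)f(g^{-1}.x)$ and that $j_s$ is precisely the automorphy factor produced when $R(t,\cdot)^s$ is transported by $g$. Conceptually this says that $u$ maps to the class of the $V$-valued cocycle $\gamma\mapsto\int_{z_0}^{\gamma.z_0}[u,R(\cdot,z)^s]$ in a mixed parabolic cohomology group $H^1_{\parab}(\Gamma_\lambda;\mathcal D_s^{\omega}\otimes V)$, and that period functions are the restriction of such cocycles to the generators $g_k$.

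Next I would treat the regularity/smoothness condition. Using unitarity of $\chi$, split $V=V_\infty\oplus V_\infty^\perp$ with $V_\infty=\ker(\chi(T_\lambda)-1)$ (and, for $\lambda=2$, a further orthogonal splitting at the second cusp governed by $\chi(P_{-1})$). On $V_\infty^\perp$ the Fourier expansion of $u$ at the cusp $\infty$ has no constant term, so $u$ decays exponentially there automatically, which forces the corresponding component of the extension \eqref{extends} — $f$ on $\R_{>0}$, $-\tau_s(S)f$ on $\R_{<0}$ — to be real-analytic across $0$ and $\infty$ with no further condition. On $V_\infty$ the argument is verbatim the untwisted one of \cite{Moeller_Pohl}: the cusp-form condition $\int_0^\lambda\langle u(x+iy),v\rangle\,dx=0$ for $v\in V_\infty$ is exactly what upgrades ``extends continuously'' to ``extends smoothly'' at $0$, while $\infty$ representing the same $\Gamma_\lambda$-orbit imposes nothing new. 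Injectivity of $u\mapsto f$ would then follow from a Lewis–Zagier-type inverse integral transform recovering $u$ from $f$ (equivalently, from injectivity of the cohomology map on cusp forms), so $f\equiv 0$ forces $u\equiv 0$.

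The \emph{main obstacle} is surjectivity: producing, from a $1$-eigenfunction $f\in C^\omega(\R_{>0};V)$ whose associated extension \eqref{extends} is smooth on all of $\R$, an actual $(\Gamma_\lambda,\chi)$-cusp form $u$ with $\Delta u=s(1-s)u$. In the untwisted case this is precisely where Theorem~\ref{MP_slow} invokes the Bruggeman–Lewis–Zagier description of Maass cusp forms by parabolic cohomology \cite{BLZ_part2}, and no such description is presently available for general unitary $(V,\chi)$. So the plan forces a preliminary theorem — a $\chi$-twisted analogue of \cite{BLZ_part2} — identifying the space of $(\Gamma_\lambda,\chi)$-cusp forms with spectral parameter $s$ with a mixed/parabolic cohomology group of $\Gamma_\lambda$ with coefficients in the principal-series module twisted by $(V,\chi)$, with the correct functional-analytic completions ($C^\omega$ versus smooth boundary values) built in at the cusps so that parabolicity of cocycles matches the cusp-form condition on $V_\infty$. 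Granting this, surjectivity is essentially formal: the eigenfunction equation $\mc L_{s,\chi}f=f$ is, via the $g_k$, equivalent to the cocycle relation on a generating set of $\Gamma_\lambda$ (the elliptic relations of order $q$ are handled exactly as for trivial $\chi$, since a unitary twist does not change the combinatorics), the prescribed regularity is equivalent to parabolicity, and transporting the class back through the integral pairing yields $u$. An alternative, in the spirit of Deitmar–Hilgert \cite{Deitmar_Hilgert}, is to bypass cohomology and realize $u$ directly via a $(\Gamma_\lambda,\chi)$-equivariant $V$-valued hyperfunction on $P^1(\R)$, reading $f$ as its boundary data; this substitutes a hyperfunction-theoretic input for the cohomological one but leaves the same essential difficulty — controlling the cuspidal behaviour in the twisted setting — as the crux.
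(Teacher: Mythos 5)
The statement you are asked to prove is stated in the paper as a \emph{conjecture}, and the paper gives no proof of it; it explicitly explains why: the proof of Theorem~\ref{MP_slow} rests on the characterization of Maass cusp forms by parabolic cohomology due to Bruggeman--Lewis--Zagier \cite{BLZ_part2}, and ``such a characterization is not (yet) available for $(\Gamma_\lambda,\chi)$-automorphic cusp forms with nontrivial representation.'' Your proposal reproduces this situation rather than resolving it. The forward direction (cusp form $\mapsto$ period function) is plausibly within reach by the arguments you sketch: closedness of the $V$-valued Green's form, $\chi$-equivariance producing the factor $\chi(g_k)$ in $\alpha_s(g_k)$, and the splitting $V = V_\infty \oplus V_\infty^\perp$ to handle the regularity of \eqref{extends} at the cusp. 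But for the converse you write ``the plan forces a preliminary theorem --- a $\chi$-twisted analogue of \cite{BLZ_part2}'' and then declare surjectivity ``essentially formal'' granting it. That preliminary theorem is precisely the open problem; assuming it is assuming the hard part of the conjecture. The Deitmar--Hilgert route you mention as an alternative is only known for representations induced from the trivial representation of a finite-index subgroup of $\PSL_2(\Z)$, not for arbitrary unitary $(V,\chi)$ of arbitrary cofinite $\Gamma_\lambda$, so it does not close the gap either.

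Two further points are glossed over. First, your injectivity argument invokes ``a Lewis--Zagier-type inverse integral transform recovering $u$ from $f$''; no such transform is established in the twisted setting, and in the untwisted proof injectivity is extracted from the same cohomological isomorphism you are missing, not from an independent inversion formula. Second, the assertion that ``the elliptic relations of order $q$ are handled exactly as for trivial $\chi$, since a unitary twist does not change the combinatorics'' is too quick: the relations $S^2=\id$ and $(T_\lambda S)^q=\id$ translate into identities among the $\alpha_s(g_k)$ that now involve $\chi(S)$ and $\chi(T_\lambda S)$, and one must verify that the passage from the eigenfunction equation $\mc L_{s,\chi}f=f$ to a genuine $V$-valued cocycle on $\Gamma_\lambda$ is compatible with these twisted relations. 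In short: your proposal is a reasonable research program and correctly diagnoses where the difficulty sits, but as a proof it is conditional on an unproven twisted cohomological (or hyperfunction-theoretic) input, which is exactly why the paper leaves the statement as a conjecture.
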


\subsection{The Theta group}\label{cross2}

Let $\Gamma\sceq \Gamma_2$ be the Theta group. 

\subsubsection{The original system}\label{crosstheta1}
The cross section $C'$ in \cite{Pohl_Symdyn2d} for $\Gamma$ is given by
\[
 C' \sceq C'_a \cup C'_b \cup C'_c,
\]
where
\begin{align*}
C'_a & \sceq \{ v\in S\h \mid \Rea\base(v) = -1,\ \gamma_v(-\infty)\in (-\infty,-1)_\st,\ \gamma_v(\infty)\in (-1,\infty)_\st\},
\\
C'_b & \sceq \{ v\in S\h \mid \Rea\base(v) = 1,\ \gamma_v(-\infty)\in (1,\infty)_\st,\ \gamma_v(\infty)\in (-\infty,1)_\st\}, \text{ and}
\\
C'_c & \sceq \{ v\in S\h \mid \Rea\base(v) = 0,\ \gamma_v(-\infty)\in (-\infty,0)_\st,\ \gamma_v(\infty)\in (0,\infty)_\st\}.
\end{align*}
Let 
\[
 k_1 \sceq T,\quad k_2 \sceq T^{-1}S = \bmat{2}{1}{-1}{0},\quad k_3\sceq TS = \bmat{2}{-1}{1}{0},\quad\text{and}\quad k_4\sceq S.
\]
\begin{figure}[h]
\begin{center}
\includegraphics{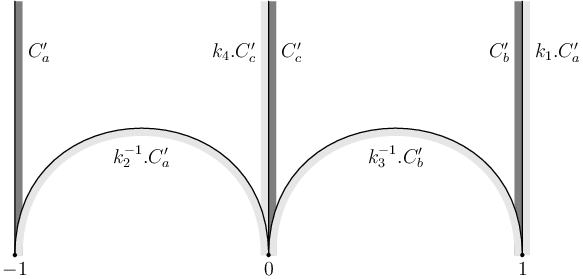}
\end{center}
\caption{Cross section and future intersections for $\Gamma_2$.}
\label{Theta1}
\end{figure}
From Figure~\ref{Theta1} it can be read off that the induced discrete dynamical system $(D,F)$ is defined on
\[
 D \sceq \big( (-1,\infty)_\st \times \{a\} \big) \cup \big( (-\infty,1)_\st \times \{b\} \big) \cup \big( (0,\infty)_\st \times \{c\} \big)
\]
and decomposes into the submaps
\begin{align*}
(-1,0)_\st \times \{a\} & \to (-1,\infty)_\st\times \{a\}, & (x,a) & \mapsto (k_2.x, a), && \weight: \chi(k_2^{-1}),
\\
(0,\infty)_\st \times \{a\} & \to (0,\infty)_\st\times \{c\}, & (x,a) & \mapsto (x,c), && \weight: \chi(1),
\\
(-\infty, 0)_\st\times \{b\} & \to (0,\infty)_\st\times \{c\}, & (x,b) & \mapsto (k_4.x, c), && \weight: \chi(k_4^{-1}),
\\
(0,1)_\st \times \{b\} & \to (-\infty, 1)_\st \times \{b\}, & (x,b) & \mapsto (k_3.x, b), && \weight: \chi(k_3^{-1}),
\\
(0,1)_\st \times \{c\} & \to (-\infty, 1)_\st \times \{b\}, & (x,c) & \mapsto (k_3.x, b), && \weight: \chi(k_3^{-1}),
\\
(1,\infty)_\st\times \{c\} & \to (-1,\infty)_\st\times \{a\}, & (x,c) & \mapsto (k_1^{-1}.x, a), && \weight: \chi(k_1^{-1}).
\end{align*}
For each $f\in \Fct(D;V)$ we let
\[
 f_a \sceq f\cdot 1_{(-1,\infty)_\st\times\{a\}},\quad f_b\sceq f\cdot 1_{(-\infty, 1)_\st\times\{b\}} \quad\text{and}\quad f_c\sceq f\cdot 1_{(0,\infty)_\st\times\{c\}}.
\]
We identify $f$ with the vector 
\[
 \begin{pmatrix} f_a \\ f_b \\ f_c \end{pmatrix},
\]
and then $(-1,\infty)_\st\times\{a\}$ with $(-1,\infty)_\st$, $(-\infty,1)_\st\times\{b\}$ with $(-\infty,1)_\st$, and $(0,\infty)_\st\times\{c\}$ with $(0,\infty)_\st$. The associated transfer operator $\mc L_{s,\chi}$ with parameter $s\in\C$ and weight $\chi$ is then represented by
\[
 \mc L_{s,\chi} = 
\begin{pmatrix}
\alpha_s(k_2) & 0 & \alpha_s(k_1^{-1})
\\
0 & \alpha_s(k_3) & \alpha_s(k_3)
\\
\alpha_s(1) & \alpha_s(k_4) & 0
\end{pmatrix}.
\]

The following theorem is a special case of the very general results in \cite{Pohl_mcf_general}.

\begin{thm}\label{Theta_slow}
Let $(V,\chi) = (\C,\id)$ be the trivial one-dimensional representation of $\Gamma$, and let $s\in\C$, $1>\Rea s > 0$. Then the Maass cusp forms $u$ for $\Gamma$ with spectral parameter $s$ are isomorphic to the function vectors $f = (f_a, f_b, f_c)^\top$ such that 
\[
f_a\in C^\omega\big((-1,\infty);\C\big), \quad f_b\in C^\omega\big( (-\infty,1); \C\big),\quad \text{and}\quad f_c\in C^\omega\big( (0,\infty);\C \big),
\]
and $f= \mc L_{s,\chi}f$, and the map
\[
 \begin{cases}
  f_c & \text{on $(0,\infty)$}
\\
-\tau_s(S)f_c & \text{on $(-\infty,0)$}
 \end{cases}
\]
extends smoothly to $\R$, and the map
\[
 \begin{cases}
  f_a & \text{on $(-1,\infty)$}
\\
-\tau_s(T^{-1})f_b & \text{on $(-\infty, -1)$}
 \end{cases}
\]
extends smoothly to $P^1(\R)$. The isomorphism from $u$ to $f$ is given by (see Theorem~\ref{MP_slow} for notation)
\begin{align*}
 f_a(t) & = \int_{-1}^{i\infty} [u, R(t,\cdot)^s],\quad f_b(t) = -\int_{1}^{i\infty} [u, R(t,\cdot)^s],
\intertext{and}
 f_c(t) & = \int_{0}^{i\infty} [u, R(t,\cdot)^s].
\end{align*}
\end{thm}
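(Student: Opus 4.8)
The plan is to follow the two-step strategy by which Theorem~\ref{MP_slow} is established in \cite{Moeller_Pohl, Pohl_mcf_general}: first realize the space of function vectors $f=(f_a,f_b,f_c)^\top$ described in the statement as a space of mixed-parabolic $1$-cocycles for $\Gamma=\Gamma_2$ valued in the analytic principal-series module $\mc D^\omega_s$, and then invoke the Bruggeman--Lewis--Zagier isomorphism \cite{BLZ_part2} between that cohomology and the space of Maass cusp forms for $\Gamma_2$ with spectral parameter $s$.

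For the direction from a cusp form $u$ to a function vector $f$, one first checks that, for each fixed $t$, the $1$-form $[u,R(t,\cdot)^s]$ on $\h$ is closed; this is a short computation using only $\Delta u=s(1-s)u$ and the fact that $z\mapsto R(t,z)^s$ satisfies the same eigenequation. Hence the three period integrals are independent of the path of integration in $\h$, and they converge at the endpoint $i\infty$ precisely because $u$ decays rapidly towards the cusp $[\infty]$; at the finite base points $0,\pm1$ one integrates over a compact initial arc only, so no further convergence issue arises. Differentiating under the integral sign gives $f_a\in C^\omega((-1,\infty))$, $f_b\in C^\omega((-\infty,1))$ and $f_c\in C^\omega((0,\infty))$. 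Each of the six submaps of $(D,F)$ read off from Figure~\ref{Theta1} corresponds to splitting one of these integrals at the image of its base point under the relevant $k_i$ (or the identity) and pulling the resulting tail back by $u\circ k_i=u$; carried out for $k_1,\dots,k_4$ this reproduces exactly the matrix identity $f=\mc L_{s,\chi}f$. This is the step that uses the particular three-component cross section with base points $-1$, $1$, $0$: the tails of the three integrals close up among themselves only because the third component is present, the would-be surplus direction being killed by the submap $(0,1)_\st\times\{c\}\to(-\infty,1)_\st\times\{b\}$, which ties $f_c$ to $f_b$. Finally, the rapid decay of $u$ at the two cusps $[\infty]$ and $[1]$ translates into the two extension conditions: gluing $f_c$ with $-\tau_s(S)f_c$ across $0$ encodes the cusp-form condition at $[\infty]$ (here $S.\infty=0$ and $S.C'_c$ is the oppositely oriented mirror of $C'_c$), and gluing $f_a$ with $\tau_s(T^{-1})f_b$ smoothly across $-1$ and at $\infty$ encodes it at $[1]$ (here $S.1=-1$ and $T^{-1}.C'_b$ sits opposite $C'_a$).

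For the converse, and for injectivity of the map $u\mapsto f$, the plan is to attach to any $f$ satisfying the hypotheses a $1$-cocycle $c\colon\Gamma_2\to\mc D^\omega_s$ by prescribing its values on $S$ and $T$ (equivalently on the $k_i$) as suitable combinations of the components $f_\bullet$ under the operators $\alpha_s(\cdot)$. The cross-section geometry shows that, once expanded over the six submaps, the relation $f=\mc L_{s,\chi}f$ is exactly the cocycle relation $c_{gh}=c_g+\alpha_s(g)c_h$ evaluated on the defining data of $\Gamma_2$ (generated by $S$ and $T$ subject only to $S^2=1$). The $C^\omega$-regularity of the $f_\bullet$ places $c$ in the analytic module $\mc D^\omega_s$, and the two extension conditions are equivalent to $c$ being parabolic, i.e. to $c_P$ lying, for each of the two parabolic generators $P$, in the image of $\alpha_s(P)-1$ on the larger semi-analytic module used in \cite{BLZ_part2}. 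Granting this dictionary, the Maass-cusp-form/cohomology isomorphism of \cite{BLZ_part2} identifies these cocycles with the desired Maass cusp forms, and since the BLZ cocycle attached to $u$ is itself assembled from the $1$-form $[u,R(t,\cdot)^s]$, the isomorphism produced is the one given by the stated integral formulae. (One could instead transport the statement through the conjugacy of $\Gamma_2$ with $\overline{\Gamma_0(2)}$ and a parabolic-cohomology argument for a finite-index subgroup of $\PSL_2(\Z)$, but the direct route via \cite{BLZ_part2} is the one taken in \cite{Pohl_mcf_general}.)

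The main obstacle is the last part of this dictionary: matching the two \emph{mixed} extension conditions --- $f_a$ glued with $\tau_s(T^{-1})f_b$ across the interior cuspidal point $-1$, and $f_c$ glued with $-\tau_s(S)f_c$ across $0$ --- with parabolicity of $c$ at the two cusps, while keeping precise track of the regularity classes involved (real-analytic on the open intervals, merely smooth across the gluing points) so that $c$ lands in BLZ's module and not in a strictly larger or smaller one. One must also verify, at the level of germs of the period function, that the path manipulations in the first direction lose no contribution at the cuspidal points, and that the apparent extra dimension coming from three components rather than two is exactly cancelled. The remaining ingredients --- closedness of the $1$-form, convergence at $i\infty$, and analyticity via differentiation under the integral sign --- are routine.
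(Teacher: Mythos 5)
Your proposal follows essentially the same route the paper relies on: the paper gives no proof of Theorem~\ref{Theta_slow} beyond declaring it a special case of the general results of \cite{Pohl_mcf_general}, whose method is exactly the period-integral/parabolic-cohomology dictionary with \cite{BLZ_part2} that you outline (closedness of $[u,R(t,\cdot)^s]$, path-splitting along the six submaps to obtain $f=\mc L_{s,\chi}f$, the extension conditions encoding cuspidality at the two cusps, and the converse via mixed-parabolic cocycles). The one technical step you flag as the main obstacle --- matching the two gluing conditions with parabolicity of the cocycle in the correct regularity class --- is precisely the content deferred to the cited reference, so your sketch is consistent with the paper at the level of detail the paper itself provides.
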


As in Section~\ref{cross1} we again expect that Theorem~\ref{Theta_slow} can be generalized to arbitrary $(\Gamma,\chi)$-automorphic cusp forms.

\subsubsection{The reduced system}\label{crosstheta2}

Figure~\ref{Theta1} shows that already $\pi(C'_a \cup C'_b)$ constitutes a cross section. The induced discrete dynamical system on
\[
 D \sceq \big( (-1,\infty)_\st\times\{a\} \big) \cup \big( (-\infty,1)_\st\times\{b\} \big)
\]
is given by the submaps
\begin{align*}
(-1,0)_\st\times \{a\} & \to (-1,\infty)_\st\times \{a\}, & (x,a) &\mapsto (k_2.x, a), && \weight: \chi(k_2^{-1})
\\
(0,1)_\st\times \{a\} & \to (-\infty, 1)_\st\times \{b\}, & (x,a) & \mapsto (k_3.x,b), && \weight: \chi(k_3^{-1})
\\
(1,\infty)_\st\times \{a\} & \to (-1,\infty)_\st\times \{a\}, & (x,a) & \mapsto (k_1^{-1}.x, a), && \weight: \chi(k_1)
\\
(-\infty, -1)_\st\times \{b\} & \to (-\infty, 1)_\st\times \{b\}, & (x,b) & \mapsto (k_1.x, b), && \weight: \chi(k_1^{-1})
\\
(-1,0)_\st\times \{b\} & \to (-1,\infty)_\st\times \{a\}, & (x,b) & \mapsto (k_2.x,a), && \weight: \chi(k_2^{-1})
\\
(0,1)_\st\times \{b\} & \to (-\infty,1)_\st\times\{b\}, & (x,b) & \mapsto (k_3.x,b), && \weight: \chi(k_3^{-1}).
\end{align*}
If we represent $f\in \Fct(D;V)$ by the function vector 
\[
 \begin{pmatrix} f_a \\ f_b \end{pmatrix}
\]
then the associated transfer operator is represented by
\[
 \mc L^{(r)}_{s,\chi} = 
\begin{pmatrix} 
\alpha_s(k_1^{-1}) + \alpha_s(k_2) & \alpha_s(k_2)
\\
\alpha_s(k_3) & \alpha_s(k_1) + \alpha_s(k_3) 
\end{pmatrix}.
\]

Even though the transfer operator $\mc L_{s,\chi}^{(r)}$ derives from a smaller cross section than $\mc L_{s,\chi}$, its highly regular $1$-eigenfunction vectors characterize the Maass cusp forms for $\Gamma$ in the case $(V,\chi) = (\C,\id)$ as we will show in the following.

\begin{lemma}\label{Theta_iso}
The map
\[
 \begin{pmatrix} f_a \\ f_b \end{pmatrix} \longleftrightarrow \begin{pmatrix} f_a \\ f_b \\ f_c \end{pmatrix}
\]
is an isomorphism between the $1$-eigenfunction vectors of $\mc L_{s,\chi}^{(r)}$ and the $1$-eigenfunction vectors of $\mc L_{s,\chi}$.
\end{lemma}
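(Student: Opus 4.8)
The plan is to read off the claim directly from the two transfer operators. The reduced system has, written out in components, the eigenvalue equations
\[
f_a = \alpha_s(k_1^{-1})f_a + \alpha_s(k_2)f_a + \alpha_s(k_2)f_b, \qquad
f_b = \alpha_s(k_3)f_a + \alpha_s(k_1)f_b + \alpha_s(k_3)f_b,
\]
while the original system gives
\[
f_a = \alpha_s(k_2)f_a + \alpha_s(k_1^{-1})f_c,\qquad
f_b = \alpha_s(k_3)f_b + \alpha_s(k_3)f_c,\qquad
f_c = \alpha_s(1)f_a + \alpha_s(k_4)f_b .
\]
The natural candidate for the map in the statement is: given a $1$-eigenfunction vector $(f_a,f_b)$ of $\mc L_{s,\chi}^{(r)}$, set $f_c \sceq \alpha_s(1)f_a + \alpha_s(k_4)f_b$, i.e.\ declare the third equation of the original system to hold by \emph{definition}. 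The third equation then holds automatically, and the task reduces to checking, first, that the triple $(f_a,f_b,f_c)$ so defined satisfies the first two equations of the original system, and, second, that the construction is invertible and preserves the regularity conditions. Invertibility in the other direction is immediate: given a $1$-eigenfunction vector $(f_a,f_b,f_c)$ of $\mc L_{s,\chi}$, simply forget $f_c$; that $f_c$ is forced to equal $\alpha_s(1)f_a+\alpha_s(k_4)f_b$ is exactly the third equation, so the two maps are mutually inverse once the forward direction is established.

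The key algebraic step is the substitution of $f_c = \alpha_s(1)f_a + \alpha_s(k_4)f_b$ into the first two equations of the original system and matching them against those of the reduced system. For the $f_a$-equation this reads
\[
f_a = \alpha_s(k_2)f_a + \alpha_s(k_1^{-1})\bigl(\alpha_s(1)f_a + \alpha_s(k_4)f_b\bigr),
\]
so one needs the cocycle-type identities $\alpha_s(k_1^{-1})\alpha_s(1) = \alpha_s(k_1^{-1})$ and $\alpha_s(k_1^{-1})\alpha_s(k_4) = \alpha_s(k_2)$, the latter coming from $k_1^{-1}k_4 = T^{-1}S = k_2$ together with the fact that $g\mapsto \alpha_s(g)$ is (anti)multiplicative on the relevant domains; note $\alpha_s(1) = \id$ since $\chi(1)=1$ and $j_s(1,x)=1$. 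Similarly the $f_b$-equation needs $\alpha_s(k_3)\alpha_s(1) = \alpha_s(k_3)$ and $\alpha_s(k_3)\alpha_s(k_4) = \alpha_s(k_3)\alpha_s(k_4)$, and here one must recognize that the reduced $f_b$-equation's term $\alpha_s(k_3)f_b$ together with the contribution of $f_c$ must reproduce $\alpha_s(k_3)f_b + \alpha_s(k_3)f_c = \alpha_s(k_3)f_b + \alpha_s(k_3)\alpha_s(1)f_a + \alpha_s(k_3)\alpha_s(k_4)f_b$; comparing with $\alpha_s(k_3)f_a + \alpha_s(k_1)f_b + \alpha_s(k_3)f_b$ forces $\alpha_s(k_3)\alpha_s(k_4) = \alpha_s(k_1)$ on the relevant subinterval, which is $k_3 k_4 = TS\cdot S = T = k_1$. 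Thus all the required identities are pure $\PSL_2(\Z)$-word identities among $S,T$, lifted through $\alpha_s$; the care needed is only to track on which subintervals of $\R$ each composition is defined, using the explicit submap domains listed above, and to observe that the characteristic-function bookkeeping in $\mc L_{s,\chi}f = \sum 1_{F(D_{\alpha,\beta})}\alpha_s(g_{\alpha,\beta})(f\cdot 1_{D_{\alpha,\beta}})$ matches the partition of the intervals $(-1,\infty)$ and $(-\infty,1)$ used in the reduced system.

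Finally one must verify that the regularity conditions correspond. The conditions ``$f_a\in C^\omega((-1,\infty))$, $f_b\in C^\omega((-\infty,1))$'' are identical in both settings, so nothing is lost there; the only point is that $f_c$, defined as $\alpha_s(1)f_a + \alpha_s(k_4)f_b = f_a|_{(0,\infty)} + \tau_s(S)f_b|_{(0,\infty)}$ — using that $k_4 = S$ maps $(0,\infty)$ appropriately and $\chi = \id$ — is automatically real-analytic on $(0,\infty)$ because it is a sum of real-analytic functions (note $\tau_s(S)f_b$ involves $f_b(S.x) = f_b(-1/x)$, analytic on $(0,\infty)$ since $-1/x\in(-\infty,0)\subset(-\infty,1)$). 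The smoothness-at-$0$ and smoothness-at-$P^1(\R)$ extension conditions of Theorem~\ref{Theta_slow} then need to be shown equivalent to conditions on $(f_a,f_b)$ alone; since $f_c$ is an explicit expression in $f_a,f_b$, each extension condition becomes a condition purely on $f_a$ and $f_b$, and one checks it is implied by (indeed, given the eigenfunction equations, equivalent to) the analyticity of $f_a,f_b$ on their stated intervals. I expect this last bookkeeping — confirming that no regularity is silently added or dropped when passing between the two systems, in particular that the $C^\omega$ hypotheses on $f_a,f_b$ already force the requisite behavior of $f_c$ — to be the main obstacle, though it is conceptual rather than computational; the algebraic identities in the middle step are routine $2\times 2$ matrix/word manipulations.
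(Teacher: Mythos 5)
Your core argument is correct and is exactly the paper's (one-line) proof, fleshed out: the paper simply observes that the third equation of the original system forces $f_c = \alpha_s(1)f_a + \alpha_s(k_4)f_b$, and leaves to the reader the substitution into the first two equations, which, as you verify, comes down to the word identities $k_1^{-1}k_4 = T^{-1}S = k_2$ and $k_3k_4 = TS^2 = k_1$ together with the multiplicativity $\alpha_s(g)\alpha_s(h)=\alpha_s(gh)$. One caveat on your final paragraph: the lemma concerns only the eigenfunction equations, and your claim that the smooth-extension conditions at $0$ and at $\infty$ are implied by (or equivalent to) analyticity of $f_a,f_b$ on their open intervals is not correct --- these are genuinely additional conditions (they encode cuspidality) and therefore reappear as separate hypotheses, rewritten in terms of $f_a$ and $f_b$ alone, in Corollary~\ref{Theta_rslow}; this does not affect the lemma itself.
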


\begin{proof}
If $f=(f_a, f_b, f_c)^\top$ is a $1$-eigenfunction of $\mc L_{s,\chi}$, then 
\[
f_c = \alpha_s(1)f_a + \alpha_s(k_4)f_b.
\] 
\end{proof}

The combination of Theorem~\ref{Theta_slow} with Lemma~\ref{Theta_iso} yields the following Corollary.

\begin{cor}\label{Theta_rslow}
Let $(V,\chi) = (\C,\id)$ and $\Rea s \in (0,1)$. Then the Maass cusp forms for $\Gamma$ with spectral parameter $s$ are isomorphic to the $1$-eigenfunction vectors $f=(f_a,f_b)^\top$ of $\mc L_{s,\chi}^{(r)}$ which satisfy
\[
 f_a \in C^\omega\big( (-1,\infty); \C\big) \quad\text{and}\quad f_b \in C^\omega\big( (-\infty,1);\C\big),
\]
and for which the map
\[
 \begin{cases}
  f_a + \tau_s(S) f_b & \text{on $(0,\infty)$}
\\
 -\tau_s(S)f_a - f_b & \text{on $(-\infty,0)$}
 \end{cases}
\]
extends smoothly to $\R$, and the map
\[
 \begin{cases}
  f_a & \text{on $(-1,\infty)$}
\\
-\tau_s(T^{-1})f_b & \text{on $(-\infty,-1)$}
 \end{cases}
\]
extends smoothly to $P^1(\R)$. 
\end{cor}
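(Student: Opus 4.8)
The plan is to derive the statement from Theorem~\ref{Theta_slow} by transporting its conclusion through the isomorphism of Lemma~\ref{Theta_iso}. Under that lemma a pair $(f_a,f_b)$ corresponds to the triple $(f_a,f_b,f_c)$ with $f_c = \alpha_s(1)f_a + \alpha_s(k_4)f_b$; since $(V,\chi)=(\C,\id)$ the weights are trivial, so $\alpha_s(g)=\tau_s(g)$, and as $k_4=S$ this reads $f_c = f_a + \tau_s(S)f_b$. First I would record that this formula is compatible with the analyticity requirements: $(0,\infty)\subset(-1,\infty)$, $S.(0,\infty)=(-\infty,0)\subset(-\infty,1)$, and $j_s(S,x)=x^{-2s}$ is real-analytic on $(0,\infty)$, so $f_a+\tau_s(S)f_b\in C^\omega\big((0,\infty);\C\big)$ whenever $f_a\in C^\omega\big((-1,\infty);\C\big)$ and $f_b\in C^\omega\big((-\infty,1);\C\big)$. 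Hence the regularity hypotheses of Theorem~\ref{Theta_slow} on $(f_a,f_b,f_c)$ are equivalent to the ones imposed in the Corollary on $(f_a,f_b)$.

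Next I would translate the two smoothness (extension) conditions of Theorem~\ref{Theta_slow} into conditions on $f_a$ and $f_b$ alone. The second condition there already involves only $f_a$ and $f_b$ and is copied verbatim into the Corollary. For the first one, on $(0,\infty)$ one simply substitutes $f_c=f_a+\tau_s(S)f_b$, while on $(-\infty,0)$ one must rewrite $-\tau_s(S)f_c=-\tau_s(S)f_a-\tau_s(S)^2 f_b$. The key elementary point is the operator identity $\tau_s(S)^2=\id$: this follows from $S^2=\id$ in $\PSL_2(\R)$ together with the cocycle relation $j_s(gh,x)=j_s(g,h.x)\,j_s(h,x)$ (valid here because all the Jacobian factors $(cx+d)^{-2}$ are positive on $\R$), which gives $j_s(S,x)\,j_s(S,S.x)=j_s(S^2,x)=1$. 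Therefore $-\tau_s(S)f_c=-\tau_s(S)f_a-f_b$ on $(-\infty,0)$, which is exactly the piecewise map appearing in the Corollary. So a triple $(f_a,f_b,f_c)$ with $f_c=f_a+\tau_s(S)f_b$ satisfies the hypotheses of Theorem~\ref{Theta_slow} if and only if $(f_a,f_b)$ satisfies the hypotheses of the Corollary.

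It then remains to assemble the bijections. Given a Maass cusp form $u$ with spectral parameter $s$, Theorem~\ref{Theta_slow} produces a $1$-eigenvector $(f_a,f_b,f_c)$ of $\mc L_{s,\chi}$ with the stated integral representations; Lemma~\ref{Theta_iso} shows that $f_c$ is determined by $(f_a,f_b)$ and that $(f_a,f_b)$ is a $1$-eigenvector of $\mc L^{(r)}_{s,\chi}$, and the previous paragraphs show it satisfies the two extension conditions of the Corollary. Conversely, from an admissible $(f_a,f_b)$ I would set $f_c\sceq f_a+\tau_s(S)f_b$, invoke Lemma~\ref{Theta_iso} to obtain a $1$-eigenvector $(f_a,f_b,f_c)$ of $\mc L_{s,\chi}$, check the hypotheses of Theorem~\ref{Theta_slow} by reversing the above substitutions, and conclude that Theorem~\ref{Theta_slow} returns a Maass cusp form. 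Since both passages are compositions of the (inverse) isomorphisms of Theorem~\ref{Theta_slow} and Lemma~\ref{Theta_iso}, they are mutually inverse linear bijections, and $u\mapsto(f_a,f_b)$ is the one inherited from Theorem~\ref{Theta_slow}. The only genuinely delicate bookkeeping is the cocycle computation underlying $\tau_s(S)^2=\id$ (in particular getting the branch of the $s$-th power right); everything else is transporting conditions through an explicit isomorphism.
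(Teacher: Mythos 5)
Your proposal is correct and follows exactly the route the paper intends: the paper derives the corollary by combining Theorem~\ref{Theta_slow} with Lemma~\ref{Theta_iso}, i.e.\@ by substituting $f_c = f_a + \tau_s(S)f_b$ into the extension conditions and using $\tau_s(S)^2=\id$ to rewrite $-\tau_s(S)f_c$ as $-\tau_s(S)f_a - f_b$. Your write-up merely makes explicit the cocycle computation and the compatibility of the regularity conditions, which the paper leaves to the reader.
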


The isomorphism in Corollary~\ref{Theta_rslow} is given as in Theorem~\ref{Theta_slow}.

\subsection{Non-cofinite Hecke triangle groups}\label{sec:slow_infinite}

Let $\Gamma$ be a non-cofinite Hecke triangle group, thus $\Gamma \sceq \Gamma_\lambda$ for some $\lambda>2$. The set of representatives $C'$ for the cross section $\wh C$ from \cite{Pohl_hecke_infinite} decomposes into two components
\[
 C' \sceq C'_a \cup C'_b
\]
which are
\begin{align*}
C'_a & \sceq \{ v\in S\h \mid \Rea\base(v)=-1,\ \gamma_v(-\infty)\in \R_\st,\ \gamma_v(\infty)\in (-1,\infty)_\st\}, \text{ and}
\\
C'_b & \sceq \{ v\in S\h \mid \Rea\base(v)=1,\ \gamma_v(-\infty)\in \R_\st,\ \gamma_v(\infty)\in (-\infty,1)_\st\}.
\end{align*}
The induced discrete dynamical system $(D,F)$ can be read off from Figure~\ref{Hecke_inf}, 
\begin{figure}[h]
\begin{center}
\includegraphics{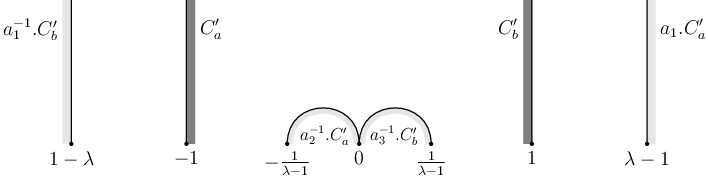}
\end{center}
\caption{Cross section and future intersections for $\Gamma_\lambda$, $\lambda>2$.}
\label{Hecke_inf}
\end{figure}
where  
\[
 a_1 \sceq T,\ a_2 \sceq T^{-1}S = \bmat{\lambda}{1}{-1}{0}\ \text{and}\ a_3\sceq TS = \bmat{\lambda}{-1}{1}{0}.
\]
Thus, it is defined on
\[
 D \sceq \big( (-1,\infty)_\st \times \{a\} \big) \cup \big( (-\infty, 1)_\st \times\{b\} \big)
\]
and given by the weighted submaps
\begin{align*}
(-1,0)_\st\times \{a\} & \to (-1,\infty)_\st\times\{a\}, &(x,a) & \mapsto (a_2.x,a), && \weight: \chi(a_2^{-1}),
\\
(0,1)_\st\times\{a\} & \to (-\infty,1)_\st\times\{b\}, &(x,a) & \mapsto (a_3.x,b), && \weight: \chi(a_3^{-1}),
\\
(-1+\lambda,\infty)_\st\times\{a\} & \to (-1,\infty)_\st\times\{a\}, &(x,a) & \mapsto (a_1^{-1}.x,a), && \weight: \chi(a_1),
\\
(-\infty, 1-\lambda)_\st\times\{b\} & \to (-\infty,1)_\st\times\{b\}, &(x,b) & \mapsto (a_1.x,b), && \weight: \chi(a_1^{-1}),
\\
(-1,0)_\st\times\{b\} & \to (-1,\infty)_\st\times\{a\}, &(x,b) & \mapsto (a_2.x,a), && \weight: \chi(a_2^{-1}),
\\
(0,1)_\st\times\{b\} & \to (-\infty,1)_\st\times\{b\}, &(x,b) & \mapsto (a_3.x,b), && \weight: \chi(a_3^{-1}).
\end{align*}
For $f\in \Fct(D;V)$ we set
\[
 f_1\sceq f\cdot 1_{(-1,\infty)_\st\times\{a\}} \quad\text{and}\quad f_2 \sceq f\cdot 1_{(-\infty,1)_\st\times\{b\}},
\]
and identify $f$ with
\[
\begin{pmatrix}
f_1 
\\
f_2  
\end{pmatrix}
\]
as well as $(-1,\infty)_\st\times\{a\}$ with $(-1,\infty)_\st$, and $(-\infty, 1)_\st\times\{b\}$ with $(-\infty,1)_\st$. The associated transfer operator $\mc L_{s,\chi}$ with parameter $s\in\C$ and weight $\chi$ is then represented by 
\[
 \mc L_{s,\chi} = 
\begin{pmatrix}
\alpha_s(a_2) + \alpha_s(a_1^{-1}) & \alpha_2(a_2) \\ \alpha_s(a_3) & \alpha_s(a_1) + \alpha_s(a_3)
\end{pmatrix}.
\]

In view of Theorems~\ref{MP_slow} and \ref{Theta_slow} we expect the following significance of the $1$-eigenfunction vectors of $\mc L_{s,\chi}$. This conjecture is also supported by Theorem~\ref{infinite_fast} below.

\begin{conj}
The (sufficiently regular) $1$-eigenfunction vectors of $\mc L_{s,\chi}$ determine the residues at the resonance $s$.
\end{conj}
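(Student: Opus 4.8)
The plan is to follow the template of the proofs of Theorems~\ref{MP_slow} and \ref{Theta_slow}, systematically replacing ``Maass cusp form'' by ``resonant state''. Fix a resonance $s$ of $\Delta(\Gamma;\chi)$, i.e.\ a pole of the meromorphically continued resolvent $R(s)$, and let $u$ run over the resonant states at $s$ --- the $V$-valued $(\Gamma,\chi)$-automorphic solutions of $\Delta u = s(1-s)u$ whose lift to $\h$ has the asymptotics towards the funnel endpoints in $\bd$ and towards the cusp $\infty$ that cut out the range of the residue of $R(\cdot)$ at $s$. First I would attach to each such $u$ a candidate eigenfunction vector by the Eichler-type integral transform that already appears in Theorems~\ref{MP_slow} and \ref{Theta_slow}: put
\[
 f_1(t) = \int_{-1}^{i\infty}[u,R(t,\cdot)^s], \qquad f_2(t) = -\int_{1}^{i\infty}[u,R(t,\cdot)^s],
\]
with $R(t,z)=\Ima\!\big(\tfrac{1}{t-z}\big)$ and $[u,v]$ the closed $V$-valued one-form of Theorem~\ref{MP_slow}, and then verify --- using the $\Gamma_\lambda$-equivariance of $u$, the cocycle property of the automorphy factor $j_s$, and the explicit submaps listed in Section~\ref{sec:slow_infinite} --- that $f=(f_1,f_2)^\top$ solves $f=\mc L_{s,\chi}f$ and has the regularity demanded in the conjecture. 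This part is essentially formal once the transform is known to converge; convergence near $t=-1,1$ is where the funnel behaviour of $u$ enters, and convergence of the path integral into $\infty$ is where its behaviour at the cusp does.

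For the converse --- reconstructing a resonant state from a sufficiently regular $1$-eigenfunction vector of $\mc L_{s,\chi}$ --- I would pass through the ``fast'' system. One first shows that the highly regular $1$-eigenfunctions of the finite-term operator $\mc L_{s,\chi}$ are in bijection with the $1$-eigenfunctions of the accelerated infinite-term operator in its nuclear space, i.e.\ the acceleration-versus-induction dictionary which for $\PSL_2(\Z)$ relates the Lewis--Zagier functional equation to Mayer's operator and which for cofinite Hecke triangle groups is carried out in \cite{Moeller_Pohl}; combined with the Fredholm determinant representation of $Z(\cdot,\chi)$ provided by Theorem~\ref{infinite_fast}, this ties the dimension of the $1$-eigenspace to the order of vanishing of $Z(\cdot,\chi)$ at $s$. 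It then remains to match this with the rank of the residue of $R(\cdot)$ at $s$ and, more to the point, to realize each $1$-eigenfunction as the transform of a genuine resonant state. A structured way to do so is cohomological: build a $(\Gamma,\chi)$-twisted analogue of the Bruggeman--Lewis--Zagier parabolic (and mixed-parabolic) cohomology with coefficients in suitable modules of boundary germs, show that the $1$-eigenfunctions of $\mc L_{s,\chi}$ represent classes in it via the period pairing, and identify that cohomology with the space of resonant states.

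I expect this last identification to be the crux. In the cofinite case the analogous step rests on the characterisation of Maass cusp forms in parabolic cohomology due to Bruggeman--Lewis--Zagier \cite{BLZ_part2}, which drives \cite{Moeller_Pohl, Pohl_mcf_general}; for infinite-area surfaces --- a fortiori with a nontrivial unitary twist $(V,\chi)$ and with a cusp present alongside the funnel --- no comparable cohomological description of resonant states is currently on the books. One therefore has to simultaneously set up the function spaces encoding the funnel asymptotics (the role of $\bd$ and of the fractal boundary of the cross section), establish the requisite exactness and vanishing statements for the twisted cohomology, and control the meromorphic continuation of $R(\cdot)$ and its residues finely enough --- e.g.\ via the scattering theory employed in \cite{Guillope_Lin_Zworski} --- to read them off. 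A secondary issue is representation-theoretic: when $\chi$ is singular in a cusp the transform must be adjusted (as the spaces $V_\infty$ and $V_{-1}$ already signal in the cofinite story), and the clause ``sufficiently regular'' in the conjecture has to be pinned down in tandem with that adjustment. Finally, one might hope to shortcut part of this by deducing the non-cofinite statement from the cofinite one through the convergence of transfer operators along $\lambda_n\uparrow\lambda$ studied in Section~\ref{sequences}; but since resonances need not depend continuously on $\lambda$ near $\lambda=2$, I would expect such an argument to give only partial information rather than a complete proof.
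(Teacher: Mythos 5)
The statement you are asked to prove is labelled a conjecture in the paper: the author offers no proof, only the remark that it is ``supported by Theorem~\ref{infinite_fast}''. So there is no argument of the paper's to compare yours against, and your text has to be judged as a standalone proof attempt. As such it is not a proof but a research programme, and you say so yourself. The outline is sensible and faithfully mirrors the cofinite strategy of Theorems~\ref{MP_slow} and \ref{Theta_slow} (Eichler-type integral transform in one direction, a cohomological characterisation plus the slow/fast dictionary in the other), but every step that is genuinely new in the non-cofinite, twisted setting is left as a declared open problem rather than carried out.

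Concretely, the gaps are these. (1) In the forward direction you must first say what a ``resonant state'' is and prove that the integral $\int[u,R(t,\cdot)^s]$ converges and is real-analytic up to the funnel boundary $\bd$; this requires quantitative asymptotics of $u$ at the funnel and at the cusp that you only gesture at, and it is exactly here that the clause ``sufficiently regular'' in the conjecture would have to be given content. (2) In the converse direction, the identification of a $(\Gamma,\chi)$-twisted parabolic/mixed cohomology with boundary-germ coefficients with the space of resonant states is the entire mathematical substance of the conjecture, and you correctly note that no such result exists; invoking it is therefore circular as a proof step. (3) The Fredholm-determinant route only relates the $1$-eigenspace of the \emph{fast} operator to zeros of $\det(1-\mc L_{s,\chi})$; passing from there to the rank of the residue of the resolvent needs a Patterson--Perry-type divisor correspondence (with multiplicities) for infinite-area surfaces with both a cusp and a funnel and a unitary twist, which is likewise not available, and the slow-to-fast eigenfunction bijection you invoke is itself only known for $\PSL_2(\Z)$ and expected, not proved, for general Hecke triangle groups (the paper says as much at the end of Section~\ref{fast}). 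Your closing caveat about the $\lambda\to 2$ limit is well taken: continuity of transfer operators does not give continuity of resonances, so that route cannot close the argument either. In short: the proposal is an honest and reasonable map of the terrain, but the conjecture remains a conjecture.
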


\subsection{Convergence along sequences of Hecke triangle groups}\label{sequences}

The Phillips--Sarnak conjecture on the (non-)existence of Maass cusp forms is motivated by considerations using deformation theory along families of Fuchsian lattices. Also the results in \cite{Judge, Hillairet_Judge} are based on such a deformation theory. With this in mind, it is natural to ask how transfer operators behave along families of Fuchsian groups.

We recall from Section~\ref{sec:slow_infinite} that the elements in the discrete dynamical system for $\Gamma_\lambda$ with $\lambda>2$ are
\[
 a_1(\lambda) = \bmat{1}{\lambda}{0}{1},\quad a_2(\lambda) = \bmat{\lambda}{1}{-1}{0},\quad a_3(\lambda) = \bmat{\lambda}{-1}{1}{0}
\]
and the transfer operator is given by
\[
 \mc L_{s,\chi}^{(\lambda)} =
\begin{pmatrix}
\alpha_s(a_2(\lambda)) + \alpha_s(a_1(\lambda)^{-1}) & \alpha_s(a_2(\lambda))
\\
\alpha_s(a_3(\lambda)) & \alpha_s(a_1(\lambda)) + \alpha_s(a_3(\lambda))
\end{pmatrix}.
\]
In Section~\ref{crosstheta2} we see that for $\lambda=2$ we have $a_1(2)=k_1, a_2(2)=k_2, a_3(2) = k_3$ and that the transfer operator for the reduced system for the Theta group $\Gamma_2$ is just $\mc L_{s,\chi}^{(2)}$.

\begin{thm}\label{convergence}
We have the convergence
\[
 \mc L_{s,\chi}^{(\lambda)} \to \mc L_{s,\chi}^{(2)} \quad\text{as $\lambda \searrow 2$}
\]
in operator norm for any choice of norm on $\Fct((-1,\infty);V)\times \Fct((-\infty,1);V)$.
\end{thm}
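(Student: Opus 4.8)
The plan is to strip the $\lambda$-dependence off the matrix $\mc L_{s,\chi}^{(\lambda)}$ one mechanism at a time, treat each of its finitely many summands $\alpha_s(a_i(\lambda))$ separately, and recombine at the end via the triangle inequality for operator norms. The first, somewhat hidden, point is that the weights carry \emph{no} $\lambda$-dependence. For every $\lambda\ge 2$ the group $\Gamma_\lambda$ — including $\Gamma_2$ — is the free product $\langle S\rangle\ast\langle T_\lambda\rangle\cong\Z/2\Z\ast\Z$, so a finite-dimensional unitary representation is the same datum as a pair of unitaries $\chi(S),\chi(T_\lambda)$ on $V$ with $\chi(S)^2=\id$, and this datum is what is held fixed as $\lambda$ varies. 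Since $a_1(\lambda)=T_\lambda$, $a_2(\lambda)=T_\lambda^{-1}S$, $a_3(\lambda)=T_\lambda S$, the weights $\chi(a_1(\lambda))$, $\chi(a_1(\lambda)^{-1})$, $\chi(a_2(\lambda))$, $\chi(a_3(\lambda))$ are literally constant in $\lambda$. Hence it suffices to prove that each of $\alpha_s(a_1(\lambda))$, $\alpha_s(a_1(\lambda)^{-1})$, $\alpha_s(a_2(\lambda))$, $\alpha_s(a_3(\lambda))$ converges, as $\lambda\searrow 2$, to its value at $\lambda=2$ (that is, to $\alpha_s(k_1)$, $\alpha_s(k_1^{-1})$, $\alpha_s(k_2)$, $\alpha_s(k_3)$), in the operator norm of the Banach space $\mc B$ on which the transfer operators are taken to act — a space of $V$-valued functions on the fixed intervals $(-1,\infty)$ and $(-\infty,1)$ that extend holomorphically and boundedly to a fixed complex neighbourhood, for which the assertion should be robust under the choice of neighbourhood and of an equivalent norm.

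For a single such operator I would use the factorization $\alpha_s(a_i(\lambda))=M_{m_{i,\lambda}}\circ\chi(a_i)\circ C_{\varphi_{i,\lambda}}$, where $\varphi_{i,\lambda}=a_i(\lambda)^{-1}.(\cdot)$ is a Möbius map, $C_\varphi f=f\circ\varphi$, $m_{i,\lambda}=j_s(a_i(\lambda)^{-1},\cdot)=(c_i(\lambda)z+d_i(\lambda))^{-2s}$, and $M_m$ is multiplication by $m$. The coefficients of $a_i(\lambda)$ are polynomials in $\lambda$, so $\varphi_{i,\lambda}\to\varphi_{i,2}$ and $m_{i,\lambda}\to m_{i,2}$ uniformly on (the relatively compact part of) the neighbourhood, with $c_i(\lambda)z+d_i(\lambda)$ staying inside a fixed cut plane; this already gives $\|M_{m_{i,\lambda}}-M_{m_{i,2}}\|\to 0$. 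The middle factor is constant in $\lambda$ and of norm $1$ ($\chi$ unitary, $V$ finite-dimensional). So everything reduces to: (a) the operator-norm continuity at $\lambda=2$ of $\lambda\mapsto C_{\varphi_{i,\lambda}}$, and (b) the verification that, for $\lambda$ near $2$, all the operators $\alpha_s(a_i(\lambda))$ act on one fixed $\mc B$ with uniformly bounded norms — for (b) one uses that at $\lambda=2$ the maps $\varphi_{i,2}$ preserve the complex neighbourhood (this is built into the construction of $\mc L_{s,\chi}^{(2)}$) and keep the multiplier bases off a fixed branch cut, both being open conditions in $\lambda$.

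For (a) the mechanism is holomorphy of $f\in\mc B$: from $f(\varphi_{i,\lambda}(z))-f(\varphi_{i,2}(z))=\int_0^1 f'\bigl((1-t)\varphi_{i,2}(z)+t\varphi_{i,\lambda}(z)\bigr)\bigl(\varphi_{i,\lambda}(z)-\varphi_{i,2}(z)\bigr)\,dt$ one estimates $|f'|$ by Cauchy's inequality in terms of $\|f\|$ and the distance to the domain boundary, thereby turning the uniform smallness of $\varphi_{i,\lambda}-\varphi_{i,2}$ into uniform (over $\|f\|\le 1$) smallness of $C_{\varphi_{i,\lambda}}f-C_{\varphi_{i,2}}f$. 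Summing over the finitely many $i$ and over the four matrix slots then yields $\|\mc L_{s,\chi}^{(\lambda)}-\mc L_{s,\chi}^{(2)}\|\to 0$.

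The step I expect to carry the real weight is (a), and the delicate point is that $a_2(\lambda)$ and $a_3(\lambda)$ are \emph{hyperbolic} for $\lambda>2$ but \emph{degenerate to parabolic} at $\lambda=2$ (their two fixed points, lying on the boundary of the respective intervals, collide). The Cauchy estimate above must be made uniform while this degeneration happens: the contraction gap of the relevant maps on the fixed complex neighbourhood shrinks to $0$ as $\lambda\searrow 2$, so controlling $C_{\varphi_{i,\lambda}}f-C_{\varphi_{i,2}}f$ for arguments near the colliding boundary fixed points is exactly where the specific geometry of the domain underlying $\mc B$ — hence the precise construction of these transfer operators in \cite{Pohl_hecke_infinite, Pohl_Symdyn2d} — has to be exploited. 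The remaining work, the bookkeeping in (b), is routine.
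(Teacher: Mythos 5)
The paper states Theorem~\ref{convergence} without proof (the remark following it defers the details to a forthcoming Master thesis), so there is no argument of the author's to measure yours against; I can only assess the proposal on its own terms. Your skeleton is certainly the right starting point and is consistent with the author's remark that everything hinges on using the \emph{reduced} cross section for $\Gamma_2$: the weights $\chi(a_i(\lambda))$ are indeed constant in $\lambda$ once one observes that every $\Gamma_\lambda$ with $\lambda\ge 2$ is the free product $\langle S\rangle\ast\langle T_\lambda\rangle$ and that "fixing $\chi$" means fixing the pair $(\chi(S),\chi(T_\lambda))$; the six submaps of the $\lambda>2$ system match one-to-one those of the reduced Theta system; and the matrices $a_i(\lambda)$ converge entrywise to $k_i$, so the M\"obius maps and the factors $j_s$ converge locally uniformly.

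Nevertheless the proposal does not prove the statement as written, and the step you yourself identify as carrying "the real weight" is left open. First, the theorem asserts operator-norm convergence \emph{for any choice of norm} on $\Fct((-1,\infty);V)\times\Fct((-\infty,1);V)$, whereas you silently substitute a Banach space of functions holomorphic on a fixed complex neighbourhood with the sup norm; your mechanism (Cauchy's inequality) is intrinsically tied to that substitution and cannot deliver norm-independence. Already the single entry $\alpha_s(a_1(\lambda))f=\chi(T_\lambda)f(\cdot-\lambda)$ shows the literal claim needs reinterpretation: with the sup norm on bounded real-analytic functions, $\sup_{\|f\|\le 1}\sup_x|f(x-\lambda)-f(x-2)|$ equals $2$ for every $\lambda\neq 2$ (take $f(x)=\sin(Nx)$ with $N(\lambda-2)\equiv\pi \bmod 2\pi$), so some regularity-enforcing choice of norm is unavoidable and the quantifier must be confronted, not weakened to "equivalent norm" as you do. Second, within your chosen setting the decisive estimate --- uniformity of the Cauchy bound for $C_{\varphi_{i,\lambda}}-C_{\varphi_{i,2}}$ near the fixed points of $k_1,k_2,k_3$, which lie on the closures of $(-1,\infty)$ and $(-\infty,1)$ and at which $a_2(\lambda),a_3(\lambda)$ degenerate from hyperbolic to parabolic --- is only named, not carried out. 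This is genuinely where the proof lives: the unit ball of $B(\mc C;V)$ is not equicontinuous at boundary points of $\mc C$ (consider $z^n$ on the disc), so if a parabolic fixed point sits on $\partial\mc C$ the naive bound fails; and the paper constructs Banach-space domains only for the \emph{fast} operators, so there is nothing to import for the slow ones --- you would have to build neighbourhoods of the closed intervals adapted to the parabolic elements and verify the containments yourself. As it stands the proposal is a plausible programme rather than a proof.
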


We remark that Theorem~\ref{convergence} is very sensitive to the choice of the cross sections and their sets of representatives. Similar results hold for the transfer operators along the family of cofinite Hecke triangle groups if one uses a different cross section for the Theta group. The details will be discussed in the forthcoming Master thesis of A.\@ Adam (G\"ottingen).

 \section{The fast systems and the Selberg zeta functions}\label{fast}

In this section we discuss how to represent the Selberg zeta function $Z(s,\chi)$ for $\Gamma_\lambda$ (with the representation $(V,\chi)$) as a Fredholm determinant of a family of transfer operators arising from a discretization of the geodesic flow on $X_\lambda$. 

Each of the slow discrete dynamical systems in Section~\ref{slow} is expanding but none uniformly. This has the consequence that their associated transfer operators are not nuclear (on no nonzero Banach space) and hence do not admit Fredholm determinants. The reason for this non-uniformity in the expansion rate is the presence of submaps of the form
\[
  J_\alpha \times \{\alpha\} \to I_\alpha \times \{\alpha\},\quad (x,a) \mapsto (g.x,a),
\]
where $J_\alpha$ is a subset of $I_\alpha$ and $g$ is parabolic or elliptic. To overcome this problem we induce the slow discrete dynamical systems on these submaps. The outcome are discrete dynamical systems which branch into infinitely, but countably many submaps that are uniformly expanding. We call these systems \textit{fast}. 

The definition of transfer operator from Section~\ref{sec:TO} clearly carries over to fast discrete dynamical systems. But now the associated transfer operators involve infinitely many terms which makes necessary a discussion of their convergence and domains of definition. We show below that they constitute nuclear operators of order zero on direct sums of Banach spaces (with supremum norm) of the form
\begin{align*}
B(\mc C; V) &\sceq \{ \text{$f\colon \overline{\mc C} \to V$ continuous, $f\vert_{\mc C}$ holomorphic} \}, 
\end{align*}
where $\mc C$ is an open bounded disk in $P^1(\C)$. The action $\tau_s$ obviously extends to $B(\mc C; V)$. We continue to denote each arising discrete dynamical system by $(D,F)$.

\subsection{Hecke triangle groups with parameter $\lambda<2$}

Let $\Gamma \sceq \Gamma_\lambda$ be a cofinite Hecke triangle group with parameter $\lambda<2$. Recall the elements $g_k\in\Gamma$, $k=1,\ldots, q-1$, from Section~\ref{cross1}. The fast discrete dynamical system $(D,F)$ is defined on $D=(0,\infty)_\st$ and given by the weighted submaps
\begin{align*}
\left(g_k^{-1}.0, g_k^{-1}.\infty\right)_\st & \to (0,\infty)_\st, & x & \mapsto g_k.x, && \weight: \chi(g_k^{-1}),
\intertext{for $k=2,\ldots, q-2$, and, for $n\in\N$,}
\left(g_1^{-n}.0, g_1^{-(n+1)}.0\right)_\st & \to (0,\lambda)_\st, & x & \mapsto g_1^n.x, && \weight: \chi(g_1^{-n}),
\\
\left(g_{q-1}^{-(n+1)}.\infty, g_{q-1}^{-n}.\infty\right)_\st & \to \left(\frac1\lambda,\infty\right)_\st, & x & \mapsto g_{q-1}^n.x, && \weight: \chi(g_{q-1}^{-n}).
\end{align*}
We identify $f\in \Fct(D;\C)$ with the function vector $f=(f_1, f_r, f_{q-1})^\top$, where
\[
 f_1 \sceq f\cdot 1_{(\lambda,\infty)_\st},\quad f_r \sceq f\cdot 1_{(1/\lambda, \lambda)_\st}, \quad\text{and}\quad f_{q-1}\sceq f\cdot 1_{(0,1/\lambda)_\st}.
\]
Then the associated transfer operator with parameter $s\in\C$ is formally given by the matrix
\[
 \mc L_{s,\chi} =
\begin{pmatrix}
0 & \sum\limits_{k=2}^{q-2} \alpha_s(g_k) & \sum\limits_{n\in\N} \alpha_s(g_{q-1}^n)
\\
\sum\limits_{n\in\N} \alpha_s(g_1^n) &  \sum\limits_{k=2}^{q-2} \alpha_s(g_k) & \sum\limits_{n\in\N} \alpha_s(g_{q-1}^n)
\\
\sum\limits_{n\in\N} \alpha_s(g_1^n) &  \sum\limits_{k=2}^{q-2} \alpha_s(g_k) & 0
\end{pmatrix}.
\]

To find a domain of definition for $\mc L_{s,\chi}$ on which it becomes a nuclear operator, we need to work with neighborhoods of $\infty$ in $P^1(\C)$. To avoid dealing with changes of charts we conjugate the group $\Gamma$, the discretization, the discrete dynamical system and $\mc L_{s,\chi}$ with 
\[
 \mc T = \frac{1}{\sqrt{2}} \bmat{1}{-1}{1}{1}.
\]
Then $\Lambda \sceq \mc T \Gamma \mc T^{-1}$ is the conjugate lattice, the conjugate discrete dynamical system is $(E,G)$ defined on 
\[
 E\sceq (-1,1)_\st
\]
and given by the submaps from above, where each $g_k$ is changed to 
\[
 h_k \sceq \mc T g_k \mc T^{-1}
\]
and the weight is determined by the representation 
\[
 \eta\colon \Lambda \to U(V),\quad \eta(h) \sceq \chi(\mc T^{-1} h \mc T).
\]
Let 
\[
 \beta_s(h) \sceq \eta(h^{-1}) \tau_s(h)
\]
and
\begin{align*}
 E_1 &\sceq \mc T.(\lambda,\infty) = \left( \frac{\lambda -1}{\lambda+1}, 1\right),
\\
E_r & \sceq \mc T.\left(\frac{1}\lambda, \lambda\right) = \left( - \frac{\lambda-1}{\lambda+1}, \frac{\lambda-1}{\lambda+1}\right),
\\
E_{q-1} & \sceq \mc T.\left(0,\frac1\lambda\right) = \left(-1, -\frac{\lambda-1}{\lambda+1}\right).
\end{align*}
The conjugate transfer operator
 \[
 \wt{\mc L}_{s,\eta} =
\begin{pmatrix}
 0 & \sum\limits_{k=2}^{q-2} \beta_s(h_k) & \sum\limits_{n\in\N} \beta_s(h_{q-1}^n)
\\
\sum\limits_{n\in\N} \beta_s(h_1^n) & \sum\limits_{k=2}^{q-2} \beta_s(h_k) & \sum\limits_{n\in\N} \beta_s(h_{q-1}^n)
\\
\sum\limits_{n\in\N} \beta_s(h_1^n) & \sum\limits_{k=2}^{q-2} \beta_s(h_k) & 0
\end{pmatrix}
\]
is formally acting on the function vectors 
\[
f = 
\begin{pmatrix}
f_1 \colon E_1 \to V
\\
f_r \colon E_r \to V
\\
f_{q-1} \colon E_{q-1}\to V
\end{pmatrix}.
\]

As shown in \cite[Proposition~4.2]{Moeller_Pohl} there exist open bounded disks $\mc E_1, \mc E_r, \mc E_{q-1} \subseteq \C$ such that 

\begin{enumerate}[(i)]
\item $\overline{E_j} \subseteq \mc E_j$ for $j\in\{1,r,q-1\}$,
\item $J.\mc E_r = \mc E_r$ and $J.\mc E_1 = \mc E_{q-1}$, where $J = \mc T Q \mc T^{-1} = \textbmat{-1}{0}{0}{1}$,
\item for $k=2,\ldots, q-2$ we have $h_k^{-1}.\overline{\mc E}_1 \subseteq \mc E_r$, $h_k^{-1}.\overline{\mc E}_r \subseteq \mc E_r$ and $h_k^{-1}.\overline{\mc E}_{q-1} \subseteq \mc E_r$,
\item for $n\in\N$ we have $h_1^{-n}.\overline{\mc E}_r \subseteq \mc E_1$ and $h_1^{-n}.\overline{\mc E}_{q-1} \subseteq \mc E_1$,
\item for $n\in\N$ we have $h_{q-1}^{-n}.\overline{\mc E}_1 \subseteq \mc E_{q-1}$ and $h_{q-1}^{-n}.\overline{\mc E}_r\subseteq \mc E_{q-1}$,
\item for all $z\in\overline{\mc E}_1$ we have $\Rea z >-1$,
\item for all $z\in\overline{\mc E}_{q-1}$ we have $1>\Rea z$, and
\item for all $z\in\overline{\mc E}_r$ we have $1>\Rea z > -1$.
\end{enumerate}
Let 
\[
 B(\mc E;V) \sceq B(\mc E_1;V) \oplus B(\mc E_r;V) \oplus B(\mc E_{q-1};V).
\]

\begin{thm}\label{TO1}
\begin{enumerate}[{\rm (i)}]
\item\label{TOconv} For $\Rea s > \frac12$, the transfer operator $\wt{\mc L}_{s,\eta}$ is an operator on the Banach space $B(\mc E;V)$ and as such nuclear of order $0$.
\item\label{TOextension} The map $s \mapsto \wt{\mc L}_{s,\eta}$ extends meromorphically to all of $\C$. We also use $\wt{\mc L}_{s,\eta}$ to denote this extension. The poles are all simple and contained in $\frac{1}{2}(1-\N_0)$. For each pole $s_0$, there is a neighborhood $U$ of $s_0$ such that $\wt{\mc L}_{s,\eta}$ is of the form 
\[
 \wt{\mc L}_{s,\eta} = \frac{1}{s-s_0} \mc A_s + \mc B_s
\]
where the operators $\mc A_s$ and $\mc B_s$ are holomorphic on $U$, and $\mc A_s$ is of rank at most $4\sd(\eta)$.
\end{enumerate}
\end{thm}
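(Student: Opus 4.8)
The plan is to follow the scheme of the scalar case in \cite[Proposition~4.2]{Moeller_Pohl} and to check that endowing the composition operators with the fixed unitary factors $\eta(\cdot)\in U(V)$ leaves part~\ref{TOconv} essentially unaffected and, for part~\ref{TOextension}, merely replaces the Riemann zeta function by operator-valued polylogarithms. The basic building block is the classical fact that if $\mc C,\mc C'$ are open bounded discs in $\C$ and $\varphi$ is holomorphic on a neighbourhood of $\overline{\mc C}$ with $\varphi(\overline{\mc C})\subseteq\mc C'$, then the composition operator $f\mapsto f\circ\varphi$ from $B(\mc C';V)$ to $B(\mc C;V)$ is nuclear of order $0$, with Grothendieck nuclear numbers dominated by $\theta^k$, where $\theta<1$ is the ratio of the radius of the smallest disc concentric with $\mc C'$ containing $\varphi(\overline{\mc C})$ to the radius of $\mc C'$; multiplying by a holomorphic function bounded on $\overline{\mc C}$, or tensoring with a fixed operator on the finite-dimensional space $V$, preserves this with the obvious change of nuclear norm. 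Each $\beta_s(h)=\eta(h)\tau_s(h)$ with $h=h_k$, $k=2,\dots,q-2$, is of this form by property~(iii), while properties~(viii)--(x) guarantee that $j_s(h_k^{-1},\cdot)$ is a nonvanishing, hence bounded, holomorphic factor on a neighbourhood of the relevant $\overline{\mc E}_j$; finite sums of such operators remain nuclear of order $0$, so the ``$\sum_{k=2}^{q-2}\beta_s(h_k)$'' entries cause no trouble.

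The entries containing the infinite sums over $n$ are controlled by summing nuclear norms. The elements $h_1,h_{q-1}$ are parabolic, with fixed points $\xi_1\in\overline{E_1}$ and $\xi_{q-1}\in\overline{E_{q-1}}$, hence by property~(i) points in the \emph{interiors} of $\mc E_1$ resp.\ $\mc E_{q-1}$. By properties~(iv) and~(v) the sets $h_1^{-n}.\overline{\mc E}_r$ and $h_1^{-n}.\overline{\mc E}_{q-1}$ lie in $\mc E_1$; since $h_1$ is parabolic they are discs whose radii shrink like $n^{-2}$ and whose centres converge to $\xi_1$ at rate $n^{-1}$. Because $\xi_1$ lies in the interior of $\mc E_1$, these images stay inside a fixed proper concentric sub-disc of $\mc E_1$, so the ratio $\theta_n$ entering the building block above is bounded away from $1$ uniformly in $n$, whence $\|C_{h_1^{-n}}\|_{\mathrm{nuc}}=O(1)$; meanwhile $\sup|j_s(h_1^{-n},\cdot)|=O(n^{-2\Rea s})$ on the relevant source discs. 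Hence $\|\beta_s(h_1^n)\|_{\mathrm{nuc}}=O(n^{-2\Rea s})$, and likewise for $h_{q-1}$ via property~(vii) (or the $J$-symmetry~(ii)); both series converge in nuclear norm precisely for $\Rea s>\tfrac12$. Therefore $\wt{\mc L}_{s,\eta}$ is a nuclear-norm convergent sum of operators that are nuclear of order $0$ with uniformly geometrically dominated nuclear numbers, hence itself nuclear of order $0$ on $B(\mc E;V)$; this gives part~\ref{TOconv}.

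For part~\ref{TOextension}, the entries $\sum_{k=2}^{q-2}\beta_s(h_k)$ are entire in $s$, the only $s$-dependence being the factor $j_s(h_k^{-1},\cdot)=(\text{nonvanishing holomorphic})^s$. For the infinite sums I would Taylor-expand $j_s(h_1^{-n},x)f(h_1^{-n}.x)$ around the parabolic fixed point $\xi_1$: using the explicit shape of $h_1$, the $m$-th order term equals $n^{-2s-m}$ times a kernel holomorphic in $x$ and $s$, times the functional $f\mapsto\partial^m f(\xi_1)$, with $\eta(h_1^n)=\eta(h_1)^n$ appearing only as a prefactor. Decomposing the unitary $\eta(h_1)=\sum_{|\omega|=1}\omega\,\Pi_\omega$ spectrally turns $\sum_{n\ge1}\eta(h_1)^n n^{-\sigma}$ into $\sum_\omega\Pi_\omega\,\Li_\sigma(\omega)$, where $\Li_\sigma(\omega)=\sum_{n\ge1}\omega^n n^{-\sigma}$ is entire in $\sigma$ for $\omega\ne1$ and equals $\zeta(\sigma)$ for $\omega=1$, with a single simple pole at $\sigma=1$. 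Summing over $m$ then yields a meromorphic continuation of the entry to $\C$ with simple poles exactly at $2s+m=1$, $m\in\N_0$, i.e.\ at $s\in\tfrac12(1-\N_0)$; the principal part at $s_0=\tfrac12(1-m)$ is $\tfrac1{s-s_0}$ times a constant multiple of $\Pi_1\circ\bigl(f\mapsto\partial^m f(\xi_1)\bigr)$ tensored with an explicit kernel, an operator of rank at most $\dim\ker(\eta(h_1)-1)\le\sd(\eta)$; the same holds at the $h_{q-1}$-end. Since infinite sums occur in exactly four entries of the matrix $\wt{\mc L}_{s,\eta}$, adding the four principal parts produces an operator $\mc A_s$, holomorphic near $s_0$ and of rank at most $4\sd(\eta)$, with $\mc B_s\sceq\wt{\mc L}_{s,\eta}-\tfrac1{s-s_0}\mc A_s$ holomorphic near $s_0$ by construction; this gives part~\ref{TOextension}.

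The main obstacle is the geometric bookkeeping just indicated: one must verify that the discs of \cite[Proposition~4.2]{Moeller_Pohl} indeed keep $\theta_n\le\theta<1$ uniformly in $n$ (so that the $n^{-2\Rea s}$ decay of the multipliers is exactly what makes the series converge in nuclear norm for $\Rea s>\tfrac12$, and that the infinite sum stays of order $0$ rather than merely nuclear), and one must push the Taylor/polylogarithm computation far enough to read off that the residues factor through the projections $\Pi_1$ onto $\ker(\eta(h_1)-1)$ and $\ker(\eta(h_{q-1})-1)$. Beyond this, the representation $(V,\chi)$ enters only through finitely many unitary operators, so the whole argument is a routine upgrade of the scalar case, and I do not anticipate further difficulties.
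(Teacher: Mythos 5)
Your proposal is correct and follows essentially the same route as the paper: part (i) is the standard nuclear-norm summation argument adapted from \cite[Proposition~4.2]{Moeller_Pohl} (uniform geometric domination of the composition operators because the images $h_1^{-n}.\overline{\mc E}_r$ etc.\ accumulate only at the parabolic fixed point, which lies in the interior of $\mc E_1$, combined with the $O(n^{-2\Rea s})$ decay of the multipliers), and part (ii) rests on subtracting Taylor polynomials at the parabolic fixed points, diagonalizing the unitaries $\eta(h_1)$, $\eta(h_{q-1})$, and continuing the resulting Dirichlet series, with the residues factoring through $\ker(\eta(h_1)-1)$ and $\ker(\eta(h_{q-1})-1)$ to give the rank bound $4\sd(\eta)$. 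The one inaccuracy is your claim that the $m$-th Taylor term contributes $n^{-2s-m}$ times an $n$-independent kernel: the actual $n$-sum is $\sum_{n}\omega^{n}\bigl(n+\tfrac{2}{\lambda(1-z)}\bigr)^{-2s-m}$, a Lerch zeta function in a $z$-dependent shifted variable rather than a polylogarithm, but since the Lerch transcendent has the same continuation properties (entire in $\sigma$ for $\omega\neq1$, a single simple pole at $\sigma=1$ for $\omega=1$, holomorphic in the shift for positive real part) this is precisely what the paper uses and your conclusions are unaffected.
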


\begin{proof}
The proof of \eqref{TOconv} is an easy adaption of the corresponding statement in \cite{Moeller_Pohl} for the trivial one-dimensional representation.  For \eqref{TOextension} we need to show that the maps 
\begin{align*}
\Psi_1 & \colon 
\left\{
\begin{array}{ccl}
\{\Rea s > \tfrac12\} & \to & \{\text{operators $B(\mc E_{q-1};V)\to B(\mc E_1;V)$}\}
\\
s & \mapsto & \sum\limits_{n\in\N} \beta_s(h_{q-1}^n)
\end{array}
\right.
\\
\Psi_2 & \colon\left\{
\begin{array}{ccl}
 \{\Rea s > \tfrac12\} & \to & \{\text{operators $B(\mc E_{q-1};V)\to B(\mc E_r;V)$}\}
\\
s & \mapsto & \sum\limits_{n\in\N} \beta_s(h_{q-1}^n)
\end{array}
\right.
\\
\Psi_3 & \colon\left\{
\begin{array}{ccl}
\{\Rea s > \tfrac12\} & \to & \{\text{operators $B(\mc E_{1};V)\to B(\mc E_r;V)$}\}
\\
s & \mapsto & \sum\limits_{n\in\N} \beta_s(h_1^n)
\end{array}
\right.
\\
\Psi_4 & \colon\left\{
\begin{array}{ccl}
 \{\Rea s > \tfrac12\} & \to & \{\text{operators $B(\mc E_{1};V)\to B(\mc E_{q-1};V)$}\}
\\
s & \mapsto & \sum\limits_{n\in\N} \beta_s(h_1^n)
\end{array}
\right.
\end{align*}
extend to meromorphic functions on $\C$ with values in nuclear operators of order $0$ and with poles as claimed. We provide these proofs for the latter two maps. The proofs for the former two maps are analogous. We start with a diagonalization. 

Since $\eta(h_1)$ is a unitary operator on $V$, there exists an orthonormal basis of $V$ with respect to which $\eta(h_1)$ is represented by a (unitary) diagonal matrix, say
\[
 \diag\left( e^{2\pi i a_1},\ldots, e^{2\pi i a_d} \right)
\]
with $a_1,\ldots, a_d\in\R$ and $d=\dim V$. The degree of singularity $\sd(\eta)$ is then the number of integral $a_j$ in this representation. We use the same basis of $V$ to represent any $f\in B(\mc E_k;V)$ ($k\in \{1,r,q-1\}$) as a vector of component functions 
\[
\begin{pmatrix} f_1 \\ \vdots \\ f_d\end{pmatrix} \colon \mc E_k \to \C^d. 
\]
In these coordinates, the operator on the right hand side in the definition of $\Psi_3$ and $\Psi_4$ becomes
\[
 \diag\left( \sum_{n\in\N} e^{2\pi i n a_1} \tau_s(h_1^n), \ldots, \sum_{n\in\N} e^{2\pi i n a_d} \tau_s(h_1^n)\right).
\]
Note that here $\tau_s = \tau_s^\C$.
% and it suffices to prove the statement on meromorphic extension for each component separately. 
Let
\[
 L_s \sceq \sum_{n\in\N} e^{2\pi i n a} \tau_s(h_1^n)
\]
with $a\in\R$. Then it suffices to show that for $\ell\in\{r, q-1\}$ and $a\in\R$, the maps
\[
 \left\{
\begin{array}{ccl}
\{\Rea s > \tfrac12\} & \to & \{\text{operators $B(\mc E_{1};\C)\to B(\mc E_\ell;\C)$}\}
\\
s & \mapsto & L_s
\end{array}
\right.
\]
admit entire extensions if $a\in \R\setminus\Z$, and, if $a\in\Z$, meromorphic extensions to all of $\C$ with simple poles all of which are contained in $\tfrac12(1-\N_0)$.

Note that $1 \in \mc E_1$ is the fixed point of the parabolic element $h_1^{-1}$. For $M\in \N_0$ let $P_M\colon B(\mc E_1;\C) \to B(\mc E_1;\C)$ be the operator which subtracts from a function its Taylor polynomial of degree $M$ centered at $1$, thus
\[
 P_M(g)(z) \sceq g_M(z) \sceq g(z) - \sum_{k=0}^M \frac{g^{(k)}(1)}{k!}(z-1)^k.
\]
We use the operator $P_M$ to write $L_s$ as a sum of two operators: 
\begin{equation}\label{L_split}
 L_s = L_s\circ (1-P_M) + L_s\circ P_M.
\end{equation}

We start by investigating the first term on the right hand side of \eqref{L_split}. For $n\in\N$ we have 
\[
 h_1^n = \frac12 \bmat{2+n\lambda}{-n\lambda}{n\lambda}{2-n\lambda}
\]
and hence
\[
 h_1^{-n}.z = 1 - \frac{2}{\lambda}\left( n + \frac{2}{\lambda(1-z)} \right)^{-1}.
\]
Let $\Rea s > \frac12$, $g\in B(\mc E_1;\C)$ and $z\in \mc E_r \cup \mc E_{q-1}$. Then 
\begin{align*}
\big(L_s \circ (1&-P_M)\big)g(z)  = L_s( g-g_M) (z)  = \sum_{k=0}^M \frac{g^{(k)}(1)}{k!}L_s\big( (z-1)^k \big)
\\
& = \sum_{k=0}^M (-1)^k \frac{g^{(k)}(1)}{k!} \frac{2^{2s+k}}{\lambda^{2s+k}(1-z)^{2s}}\sum_{n\in\N} e^{2\pi i n a} \left( n + \frac{2}{\lambda(1-z)}\right)^{-2s-k}.
\end{align*}
We recall the Lerch zeta function
\[
 \zeta(s,a,w) \sceq \sum_{n=0}^\infty \frac{e^{2\pi i n a}}{(n+w)^s},
\]
defined for $\Rea s > 1$, $a\in \R$ and $w\in \C\setminus (-\N_0)$. For $a\in\Z$, this is just the Hurwitz zeta function. A classical Riemann method shows that the Lerch zeta function extends meromorphically to all of $\C$ in the $s$-variable. For that one considers the contour integral
\[
 I(s,a,w) \sceq \frac{1}{2\pi i}\int_C \frac{z^{s-1} e^{wz}}{1-e^{z+2\pi i a}} dz,
\]
where $C$ is a path which begins at $-\infty$, goes once around the origin in positive direction, and returns to $-\infty$ such that it does not enclose any point in $2\pi i (-a + \Z)$ other than (possibly) $0$. Then $I(s,a,w)$ is entire in $s$, and 
\[
 \zeta(s,a,w) = \Gamma(s-1) I(s,a,w).
\]
Hence, for $a\in\Z$, the map $s\mapsto \zeta(s,a,w)$ extends to a meromorphic function on $\C$ with only a simple pole at $s=1$. For $a\notin\Z$, the extension of $s\mapsto \zeta(s,a,w)$ is entire.

The previous calculation now shows that
\[
 L_s(g-g_M)(z) = \sum_{k=0}^M (-1)^k \frac{g^{(k)}(1)}{k!} \frac{2^{2s+k}e^{2\pi i a}}{\lambda^{2s+k}(1-z)^{2s}}  \zeta\left( 2s+k, a, 1+\frac{2}{\lambda(1-z)}\right).
\]
From $1>\Rea z$ it follows that $\Rea \left(1+\frac{2}{\lambda(1-z)}\right)>0$. Thus, the properties of the Lerch zeta function imply that for $a\in\Z$ the map $L_s(g-g_M)(z)$ extends meromorphically in the $s$-variable to all of $\C$ with simple poles at $s=(1-k)/2$, $k=0,\ldots, M$. For $a\notin \Z$, the map $L_s(g-g_M)(z)$ extends to an entire function. In both cases, the extension of $L_s\circ (1-P_M)$ is nuclear of order $0$ as a finite rank operator.

For the investigation of the second term of \eqref{L_split} we fix $t>0$ such that the ball $B_t(1)$ in $\C$ with radius $t$ around $1$ is contained in $\mc E_1$. Let $g\in B(\mc E_1;\C)$ and $M\in\N$. There exists $C\in\R$ such that for all $z\in B_t(1)$, 
\[
 |g_M(z)| \leq C |z-1|^{M+1}.
\]
Further, since $h_1$ is parabolic with fixed point $1$,  we have
\[
 \lim_{n\to\infty} h_1^{-n}.z = 1
\]
for every $z\in\C$. Thus, for each $z\in \mc E_r\cup \mc E_{q-1}$ and for some $n_0 = n_0(z) \in\N$, we have
\begin{align*}
 \left|\big(L_s\circ P_M\big)g(z)\right| & = \left|\sum_{n\in\N} \frac{e^{2\pi i n a} 2^{2s}}{(n\lambda (1-z) + 2)^{2s}} g_M(h_1^{-n}.z)\right|
\\
& \leq \left| \sum_{n\leq n_0} \frac{e^{2\pi i n a} 2^{2s}}{(n\lambda (1-z) + 2)^{2s}} g_M(h_1^{-n}.z)\right|
\\
& \qquad + \sum_{n > n_0}\frac{2^{2\Rea s} e^{\pi |\Ima s|}}{|n\lambda(1-z) + 2|^{2\Rea s + M + 1}}.
\end{align*}
Hence, $(L_s\circ P_M)g(z)$ converges for $\Rea s > -\frac{M}{2}$. The operator $L_s\circ P_M$ is nuclear of order $0$ since $P_M$ is bounded.

Obviously, these arguments apply to each $M\in\N$, and hence $L_s$ extends meromorphically to $\C$ with simple poles at $s=(1-k)/2$, $k\in\N_0$, for $a\in \Z$, and it extends holomorphically to $\C$ for $a\notin\Z$. This proves \eqref{TOextension}.
\end{proof}

An immediate consequence of Theorem~\ref{TO1} is that the Fredholm determinant 
\[
\det(1-\wt{\mc L}_{s,\eta})
\]
exists and defines a holomorphic map on $\Rea s > \frac12$. For $\Rea s$ sufficiently large (here, $\Rea s > 1$), it is given by
\[
 \det(1-\wt{\mc L}_{s,\eta}) = \exp\Big( -\sum_{n\in\N} \frac1n \Tr \wt{\mc L}_{s,\eta} \Big).
\]
To show that it equals the Selberg zeta function $Z(s,\eta)$ we need the following lemma.

\begin{lemma}\label{traces}
Let $h\in\Lambda$ and let $\mc C$ be an open bounded subset of $\C$ such that $h.\overline{\mc C} \subseteq \mc C$ and such that the action of $h$ on $\mc C$ has a single fixed point. Consider the operator $\tau_s(h^{-1}) = \tau_s^V(h^{-1})$ as acting on the Banach space $B(\mc C; V)$. Then
\[
 \Tr(\beta_s(h^{-1})) = \frac{N(h)^{-s}}{1-N(h)^{-1}} \tr\eta(h).
\]
\end{lemma}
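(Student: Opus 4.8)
The plan is to compute the trace of the operator $\eta(h)\tau_s(h)$ on $B(\mc C;V)$ by reducing to the scalar case and then to an eigenvalue computation at the fixed point. First I would diagonalize the unitary operator $\eta(h)$ on $V$: choose an orthonormal basis of $V$ in which $\eta(h) = \diag(\mu_1,\dots,\mu_d)$ with $|\mu_j|=1$ and $d = \dim V$. With respect to this basis, $B(\mc C;V) \cong B(\mc C;\C)^{\oplus d}$, and the operator $\eta(h)\tau_s^V(h)$ becomes the block-diagonal operator $\diag(\mu_1\tau_s^\C(h),\dots,\mu_d\tau_s^\C(h))$. Hence $\Tr(\eta(h)\tau_s^V(h)) = \sum_{j=1}^d \mu_j \Tr(\tau_s^\C(h)) = (\tr\eta(h))\cdot\Tr(\tau_s^\C(h))$, which reduces the claim to showing $\Tr(\tau_s^\C(h)) = N(h)^{-s}/(1-N(h)^{-1})$.

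Next I would compute the scalar trace using the standard Ruelle--Mayer formula for composition operators on spaces of holomorphic functions. Write $\psi = h^{-1}$ as a Möbius transformation, so that $\tau_s^\C(h)g(z) = j_s(h,z)\, g(h^{-1}.z) = j_s(h,z)\, g(\psi(z))$ with $j_s(h,z) = (|\det h|\,(cz+d)^{-2})^s$ for $h = \textbmat{a}{b}{c}{d}$. The hypothesis $h^{-1}.\overline{\mc C}\subseteq\mc C$ means $\psi$ maps $\overline{\mc C}$ strictly inside $\mc C$, so $\psi$ has a unique attracting fixed point $z_0\in\mc C$; this is the attracting fixed point of $h^{-1}$, equivalently the repelling fixed point of $h$. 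For such a holomorphic contraction with multiplier $\psi'(z_0)$ (note $|\psi'(z_0)|<1$), the weighted composition operator $g\mapsto w(z)\,g(\psi(z))$ is nuclear on $B(\mc C;\C)$ with trace
\[
 \Tr(\tau_s^\C(h)) = \frac{w(z_0)}{1-\psi'(z_0)},
\]
where $w(z) = j_s(h,z)$. I would either cite the relevant trace formula (e.g.\ from Mayer's work, as used in \cite{Moeller_Pohl}) or, if a self-contained argument is wanted, expand $g$ in a Taylor series around $z_0$, observe that in this basis the operator is upper-triangular with diagonal entries $w(z_0)\psi'(z_0)^k$, $k\ge 0$, and sum the geometric series $\sum_{k\ge 0} w(z_0)\psi'(z_0)^k = w(z_0)/(1-\psi'(z_0))$.

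It then remains to identify $w(z_0) = j_s(h,z_0)$ and $\psi'(z_0)$ in terms of $N(h)$. Since $h$ is hyperbolic with norm $N(h) = $ (square of the eigenvalue of larger absolute value), conjugate $h$ to $\diag(N(h)^{1/2}, N(h)^{-1/2})$ in $\PSL_2(\R)$ (or the $\PGL_2$ analogue, absorbing $|\det h|$); the repelling fixed point $z_0$ of $h$ is the attracting fixed point of $h^{-1}$, and at that point the multiplier of $h^{-1}$ is $\psi'(z_0) = N(h)^{-1}$, while the automorphy factor satisfies $j_s(h,z_0) = N(h)^{-s}$. (Both facts are conjugation-invariant: a direct computation at $h = \diag(\sqrt{N},1/\sqrt{N})$ with fixed point $0$ gives $h^{-1}.z = z/N$, so $\psi'(0) = N^{-1}$, and $j_s(h,0) = (N^{-1})^s$, using $cz+d = 1/\sqrt N$ at $z=0$ and $|\det h| = 1$.) Substituting yields $\Tr(\tau_s^\C(h)) = N(h)^{-s}/(1-N(h)^{-1})$, and combining with the first step completes the proof.

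The main obstacle, I expect, is not conceptual but making the fixed-point/multiplier bookkeeping consistent: one must be careful that $\tau_s(h)$ involves $h^{-1}$ in the argument, so the relevant fixed point is the \emph{repelling} one of $h$ (the attracting one of $h^{-1}$), and that the branch of $j_s(h,\cdot)^s$ is the one fixed by the paper's convention $j_s(g,x) = (|\det g|(cx+d)^{-2})^s$; getting the single power $N(h)^{-s}$ (rather than $N(h)^{-2s}$ or $N(h)^{s}$) hinges on tracking these conventions precisely. The nuclearity and the validity of the trace formula on $B(\mc C;V)$ are already guaranteed by the disc condition $h^{-1}.\overline{\mc C}\subseteq\mc C$ together with the holomorphic-contraction structure, so those require only a citation.
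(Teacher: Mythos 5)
Your proposal is correct and follows essentially the same route as the paper: diagonalize the unitary $\eta(h)$ to split $B(\mc C;V)$ into $\dim V$ copies of $B(\mc C;\C)$, on each of which the operator is a scalar multiple of $\tau_s^{\C}(h)$, and then invoke the scalar trace formula $\Tr\tau_s^{\C}(h)=N(h)^{-s}/(1-N(h)^{-1})$, which the paper simply cites from \cite{Ruelle_zeta}. One bookkeeping remark on your optional self-contained verification of that scalar formula: with the paper's convention $\tau_s(h)f(x)=j_s(h^{-1},x)\,f(h^{-1}.x)$, the weight at the fixed point is $j_s(h^{-1},z_0)=N(h)^{-s}$, whereas you wrote $j_s(h,z_0)$ and then evaluated it as $N^{-s}$ even though $(cz+d)^{-2}=N$ at $z_0=0$ for $h=\diag(\sqrt N,1/\sqrt N)$ --- two inversions that cancel, so your final formula is still the right one.
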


\begin{proof}
For $\tau_s^{\C}(h^{-1})\colon B(\mc C;\C) \to B(\mc C;\C)$ we have (see \cite{Ruelle_zeta})
\[
 \Tr \tau_s^{\C}(h^{-1}) = \frac{N(h)^{-s}}{1-N(h)^{-1}}.
\]
Since $\eta(h)$ is a unitary operator on $V$, we find a basis of $V$ with respect to which $\eta(h)$ is represented by a diagonal matrix, say by
\[
 \diag(a_1,\ldots, a_d)
\]
with $a_1,\ldots, a_d\in \C$, $d=\dim V$. We use the same basis of $V$ to define an isomorphism (of Banach spaces)
\[
 B(\mc C; V) \cong \bigoplus_{j=1}^d B(\mc C; \C).
\]
Under this isomorphism, the operator $\tau_s^{V}(h)$ acts diagonally, i.e.\@
\[
 \tau_s^V(h^{-1}) \cong \bigoplus_{j=1}^d \tau_s^{\C}(h^{-1}).
\]
Hence
\[
 \eta(h)\tau_s^V(h^{-1}) \cong \bigoplus_{j=1}^d a_j \tau_s^{\C}(h^{-1})
\]
and
\begin{align*}
 \Tr \eta(h)\tau_s^V(h^{-1}) & = \sum_{j=1}^d a_j \Tr\tau_s^{\C}(h^{-1}) = \frac{N(h)^{-s}}{1- N(h)^{-1}} \sum_{j=1}^d a_j = \frac{N(h)^{-s}}{1- N(h)^{-1}} \tr \eta(h).
\end{align*}
\end{proof}

\begin{thm}\label{ZetaFredholm}
\begin{enumerate}[{\rm (i)}]
\item\label{zeta1} For $\Rea s > 1$ we have $Z(s, \eta) = \Det(1 - \wt{\mc L}_{s,\eta})$.
\item\label{FDextension} The Fredholm determinant $\Det( 1 - \wt{\mc L}_{s,\eta})$ extends to a meromorphic function on $\C$ whose poles are contained in $\frac12(1-\N_0)$. The order of a pole is at most $4\sd(\eta)$.
\end{enumerate}
\end{thm}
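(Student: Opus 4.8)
The plan is to prove the two parts separately: \eqref{zeta1} by computing the iterated traces of $\wt{\mc L}_{s,\eta}$ and matching them against the Euler product defining $Z(s,\eta)$, and \eqref{FDextension} by a finite‑rank perturbation argument on top of Theorem~\ref{TO1}\eqref{TOextension}.

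For \eqref{zeta1} I would start from
\[
\Det(1-\wt{\mc L}_{s,\eta}) = \exp\Big(-\sum_{n\geq 1}\tfrac1n\Tr\wt{\mc L}_{s,\eta}^n\Big),
\]
valid for $\Rea s>1$ by Theorem~\ref{TO1}\eqref{TOconv}. Expanding the matrix product representing $\wt{\mc L}_{s,\eta}^n$, this operator is a finite sum of operators $\beta_s(h)=\eta(h)\tau_s(h)$ indexed by the length-$n$ admissible words of the fast symbolic dynamics, where $h\in\Lambda$ is the group element accumulated along the word. Only the words that close up contribute to the trace; for each of these the associated $h$ is hyperbolic and, by the defining properties of the discs $\mc E_j$, maps the relevant closure strictly inside itself, so Lemma~\ref{traces} applies and yields the contribution $\tfrac{N(h)^{-s}}{1-N(h)^{-1}}\tr\eta(h)$. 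Using the dictionary between periodic orbits of the discretised geodesic flow and hyperbolic conjugacy classes of $\Lambda$ — equivalently of $\Gamma_\lambda$, since conjugation by $\mc T$ identifies the two groups, their hyperbolic classes, and $\chi$ with $\eta$ — and organising periodic points into orbits and orbits into powers of primitive ones, the double sum collapses to
\[
\sum_{n\geq 1}\tfrac1n\Tr\wt{\mc L}_{s,\eta}^n = \sum_{[g]\in[\Gamma_\lambda]_p}\sum_{m\geq 1}\tfrac1m\,\frac{N(g^m)^{-s}}{1-N(g^m)^{-1}}\,\tr\eta(g^m).
\]
Inserting $\tfrac{N(g^m)^{-s}}{1-N(g^m)^{-1}}=\sum_{k\geq 0}N(g^m)^{-(s+k)}$, using $N(g^m)=N(g)^m$ and $\tr\eta(g^m)=\tr(\eta(g)^m)$, and recognising $\sum_{m\geq1}\tfrac1m\tr(\eta(g)^m)N(g)^{-m(s+k)}=-\log\det(1-\eta(g)N(g)^{-(s+k)})$, the right‑hand side equals $-\log Z(s,\eta)$, which is the claim. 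This is the argument of \cite{Moeller_Pohl} for the trivial one-dimensional representation; the only new ingredient is the trace identity of Lemma~\ref{traces}.

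For \eqref{FDextension}, holomorphy of $\Det(1-\wt{\mc L}_{s,\eta})$ on $\C\setminus\tfrac12(1-\N_0)$ is automatic, since there $s\mapsto\wt{\mc L}_{s,\eta}$ is holomorphic with values in nuclear operators of order $0$ by Theorem~\ref{TO1}. To bound the pole order at a point $s_0\in\tfrac12(1-\N_0)$ I would use the local decomposition $\wt{\mc L}_{s,\eta}=\tfrac{1}{s-s_0}\mc A_s+\mc B_s$ from Theorem~\ref{TO1}\eqref{TOextension}, with $\mc A_s,\mc B_s$ holomorphic on a neighbourhood $U$ of $s_0$ and $\rank\mc A_s\leq N:=4\sd(\eta)$. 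On the (cofinite in $U$) set where $1-\mc B_s$ is invertible one has
\[
\Det(1-\wt{\mc L}_{s,\eta}) = \Det(1-\mc B_s)\cdot\Det\Big(1-\tfrac{1}{s-s_0}(1-\mc B_s)^{-1}\mc A_s\Big),
\]
where the first factor is holomorphic on $U$. Since $(1-\mc B_s)^{-1}\mc A_s$ has rank $\leq N$ with only the explicit simple pole at $s_0$, the second factor reduces to an $N\times N$ determinant and is a Laurent polynomial in $(s-s_0)^{-1}$ of degree $\leq N$, namely $\det\big(I - \tfrac{1}{s-s_0}M(s)\big)=\sum_{k=0}^N \tfrac{(-1)^k}{(s-s_0)^k}\tr\big(\bigwedge^k M(s)\big)$ for a suitable holomorphic $N\times N$ matrix $M(s)$. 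As $\Det(1-\wt{\mc L}_{s,\eta})$ is already known to be holomorphic on $U\setminus\{s_0\}$, this forces a pole of order at most $N=4\sd(\eta)$ at $s_0$. This is again the argument of \cite{Moeller_Pohl}, now with the rank bound from Theorem~\ref{TO1}\eqref{TOextension}.

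The step I expect to be the main obstacle is the combinatorial bookkeeping in \eqref{zeta1}: one must verify carefully that the closed admissible words of the fast system, counted with the multiplicities coming from cyclic shifts of a periodic word and from powers of primitive group elements, reproduce exactly the product over $[\Gamma_\lambda]_p$, and that no spurious closed orbits are introduced or lost by the elliptic, parabolic, or orientation-reversing features of the discretisation; checking the fixed-point hypothesis of Lemma~\ref{traces} for each closed word is part of this. Once this matching is in place, \eqref{zeta1} is the Euler-product expansion above, and \eqref{FDextension} is a routine finite-rank perturbation of the Fredholm determinant built on Theorem~\ref{TO1}.
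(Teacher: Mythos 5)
Your proposal is correct and follows essentially the same route as the paper: part \eqref{zeta1} is the trace computation matching $\sum_n \frac1n\Tr\wt{\mc L}_{s,\eta}^n$ against the Euler product via Lemma~\ref{traces} and the dictionary between closed regular words and hyperbolic conjugacy classes (with the $p(g)/w(g)$ multiplicity bookkeeping imported from \cite{Moeller_Pohl}), merely written in the reverse direction, while the paper starts from $\ln Z(s,\eta)$ and unfolds it to the traces. For part \eqref{FDextension} the paper simply declares the statement immediate from Theorem~\ref{TO1}\eqref{TOextension}; your finite-rank perturbation factorization is the standard argument being invoked there and is fine.
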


\begin{proof}
We start with the proof of \eqref{zeta1}. Let $n\in\N$ and suppose that $h=s_0\ldots s_{n-1}$ is a word over the alphabet
\begin{equation}\label{alphabet}
 \{ h_1^m, h_2,\ldots, h_{q-2}, h_{q-1}^m \mid m\in\N\}.
\end{equation}
We call $h$ \textit{reduced} if it does not contain a subword of the form $h_1^{m_1}h_1^{m_2}$ or $h_{q-1}^{m_1}h_{q-1}^{m_2}$, $m_1,m_2\in\N$. We say that $h$ is \textit{regular} if both $h$ and $hh$ are reduced. The length of $h$ is $n$. 

Since the semigroup $H$ in $\Lambda$ generated by \eqref{alphabet} is free (see \cite{Moeller_Pohl}), each element $h\in H$ corresponds to at most one word in this alphabet. For this reason, we will identify the elements in $H$ with their corresponding words.

Let $P_n$ denote the set of regular words of length $n$. An immediate consequence of \cite[Proposition~4.1]{Moeller_Pohl} is the identity
\[
 \Tr \wt{\mc L}_{s,\eta} = \sum_{h\in P_n} \Tr\beta_s(h).
\]
Further, let $g\in\Lambda$ be hyperbolic. By \cite{Moeller_Pohl}, the conjugacy class $[g]$ contains at least one regular representative $h\in H$. Let $h_0$ be a primitive hyperbolic element such that $h_0^n = h$ for some $n\in\N$. Then $h_0\in H$. The word length $w(h)$ of $h$ and the word length $w(h_0)$ of $h_0$ are independent of the chosen representatives. We define $w(g)\sceq w(h)$ to be the \textit{word length} of $g$, and $p(g)\sceq w(h_0)$ the  \textit{primitive word length} of $g$. Then there are exactly $p(g)$ representatives of $[g]$ in $P_{w(g)}$. Note that $n(g) = w(g)/p(g)$.

For $\Rea s > 1$ we now have
\begin{align*}
\ln Z(s) & = \sum_{[g]\in [\Gamma]_p} \sum_{k=0}^\infty \ln \left( \det\left(1-\eta(g)N(g)^{-(s+k)}\right)\right)
\\
& = -\sum_{[g]\in [\Gamma]_p} \sum_{\ell=1}^\infty \frac{1}{\ell} \frac{N(g^\ell)^{-s}}{1-N(g^\ell)^{-1}} \tr\left( \eta(g^\ell) \right)
\\
& = -\sum_{[g]\in [\Gamma]_h} \frac{1}{n(g)} \frac{N(g)^{-s}}{1-N(g)^{-1}} \tr\eta(g)
\\
& = -\sum_{[g]\in [\Gamma]_h} \frac{p(g)}{w(g)} \frac{N(g)^{-s}}{1-N(g)^{-1}} \tr\eta(g)
\\
& = -\sum_{w=1}^\infty \frac1w \sum_{h^{-1}\in P_w} \frac{N(h)^{-s}}{1-N(h)^{-1}} \tr\eta(h)
\\
& = -\sum_{w=1}^\infty \frac1w \sum_{h\in P_w} \Tr\beta_s(h)
\\
& = -\sum_{w=1}^\infty \frac1w \Tr\wt{\mc L}_{s,\eta}^w
\\
& = \ln \left( \det(1-\wt{\mc L}_{s,\eta})\right).
\end{align*}
This proves \eqref{zeta1}. Statement \eqref{FDextension} follows now immediately from Theorem~\ref{TO1}\eqref{TOextension}.
\end{proof}

\begin{cor}
For regular $\eta$, the map $s \mapsto \wt{\mc L}_{s,\eta}$ and the Fredholm determinant $\Det( 1 - \wt{\mc L}_{s,\eta})$ extend to entire functions.
\end{cor}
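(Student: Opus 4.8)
\section*{Proof proposal}

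The plan is to deduce the corollary directly from the quantitative refinements already recorded in Theorems~\ref{TO1} and~\ref{ZetaFredholm}, by specialising them to the case $\sd(\eta)=0$. First I would observe that $\eta$ being regular means, by definition, that $\eta$ is singular in no cusp, i.e.\ $\dim\ker(\eta(p)-1)=0$ for every parabolic $p\in\Lambda$; since for $\lambda<2$ there is a single cusp (with stabiliser generated by the $\mc T$-conjugate of $T_\lambda$, which is one of $h_1$, $h_{q-1}$), this is equivalent to $\sd(\eta)=0$.

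Next I would invoke Theorem~\ref{TO1}\eqref{TOextension}. At any putative pole $s_0$ of $s\mapsto\wt{\mc L}_{s,\eta}$ we have a neighbourhood $U$ of $s_0$ on which $\wt{\mc L}_{s,\eta}=\tfrac{1}{s-s_0}\mc A_s+\mc B_s$ with $\mc A_s,\mc B_s$ holomorphic on $U$ and $\rank\mc A_s\le 4\sd(\eta)=0$, hence $\mc A_s\equiv 0$. Thus $\wt{\mc L}_{s,\eta}=\mc B_s$ is already holomorphic at $s_0$, so $s_0$ is not a pole; consequently $s\mapsto\wt{\mc L}_{s,\eta}$ has no poles and extends holomorphically to all of $\C$. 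Moreover the construction in the proof of Theorem~\ref{TO1}\eqref{TOextension} shows that this extension proceeds through nuclear operators of order $0$ on $B(\mc E;V)$: each matrix entry is, after continuation, entire with nuclear-of-order-$0$ values (the entries $\sum_{k=2}^{q-2}\beta_s(h_k)$ are finite sums of such operators, and each $L_s$ is a sum of a finite-rank operator and an operator of the form $L_s\circ P_M$ with $\sd(\eta)=0$ forcing the Lerch zeta factors to be entire).

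Finally, for the Fredholm determinant I would argue that, since $\wt{\mc L}_{s,\eta}$ is for every $s\in\C$ a nuclear operator of order $0$ depending holomorphically on $s$, the Fredholm determinant $\Det(1-\wt{\mc L}_{s,\eta})$ is defined and holomorphic on all of $\C$; equivalently one may quote Theorem~\ref{ZetaFredholm}\eqref{FDextension} verbatim, bounding the order of any pole of $\Det(1-\wt{\mc L}_{s,\eta})$ by $4\sd(\eta)=0$. Either way $\Det(1-\wt{\mc L}_{s,\eta})$ is entire.

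I do not expect a genuine obstacle: the statement is an immediate specialisation of the two preceding theorems to $\sd(\eta)=0$. The only point that deserves a sentence of care is to confirm that the meromorphic continuation of $\wt{\mc L}_{s,\eta}$ really has nuclear-of-order-$0$ values at \emph{every} point of $\C$ (so that $\Det(1-\wt{\mc L}_{s,\eta})$ is literally defined pointwise, not merely continued meromorphically), and this is precisely what the proof of Theorem~\ref{TO1} supplies.
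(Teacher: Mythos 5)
Your proposal is correct and is exactly the argument the paper intends (the corollary is stated without proof precisely because it is the specialisation $\sd(\eta)=0$ of Theorem~\ref{TO1}\eqref{TOextension} and Theorem~\ref{ZetaFredholm}\eqref{FDextension}: the residue operators have rank at most $4\sd(\eta)=0$ and the pole orders are at most $4\sd(\eta)=0$, while in the proof of Theorem~\ref{TO1} regularity forces all the exponents $a_j$ to be non-integral so that every Lerch zeta factor is entire). Your added remark that the continuation passes through nuclear operators of order $0$, so the determinant is genuinely defined pointwise, is a sensible point of care and is indeed supplied by the decomposition $L_s = L_s\circ(1-P_M) + L_s\circ P_M$ in that proof.
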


For $\PSL_2(\Z)$ it is known (see \cite{Chang_Mayer_transop, Lewis_Zagier}) that the $1$-eigenfunctions of the ``slow'' transfer operators from Section~\ref{cross1} and the ``fast'' transfer operators developed in this section are isomorphic. Their proof takes advantage of special properties of the modular group. However, by geometric consideration we expect that the same relation holds for general Hecke triangle groups.

\subsection{The Theta group}

Let $\Gamma \sceq \Gamma_2$ be the Theta group. Recall the elements $k_1, k_2, k_3\in \Gamma$ from Section~\ref{cross2}. The first step from the original slow system for $\Gamma$ in Section~\ref{crosstheta1} towards a fast system has already been made in Section~\ref{crosstheta2}, where we eliminated the acting elliptic element $S$ and the action by $1\in\Gamma$. The additional induction on the parabolic elements gives rise to the fast discrete dynamical system $(D,F)$ which is defined on
\begin{align*}
D &\sceq \big( (-1,\infty)_\st\times \{a\}\big) \cup \big( (-\infty,1)_\st\times\{b\}\big)
\\
&\ =
\big( (0,1)_\st\times\{a\} \big) \cup \big( (-1,0)_\st\times \{a\}\big)
 \cup \big( (1,\infty)_\st\times\{a\}\big) 
\\
& \qquad  \cup \big( (-1,0)_\st\times\{b\} \big)  \cup \big( (-\infty,-1)_\st \times \{b\} \big) \cup \big( (0,1)_\st\times \{b\} \big)
\end{align*}
and determined by the weighted submaps 
\begin{align*}
(0,1)_\st\times \{a\} & \to (-\infty,1)_\st\times\{b\}, & (x,a) & \mapsto (k_3.x, b), && \weight: \chi(k_3^{-1})
\\
(-1,0)_\st\times \{b\} & \to (-1,\infty)_\st\times \{a\}, & (x,b) & \mapsto (k_2.x,a), && \weight: \chi(k_2^{-1})
\intertext{and for $n\in\N$}
\left(k_2^{-(n+1)}.\infty, k_2^{-n}.\infty\right)_\st\times \{a\} & \to (0,\infty)_\st\times \{a\}, & (x,a) & \mapsto (k_2^n.x, a), && \weight: \chi(k_2^{-n})
\\
\left(k_1^n.(-1), k_1^{n+1}.(-1)\right)_\st\times\{a\} & \to (-1,1)_\st\times \{a\}, & (x,a) & \mapsto (k_1^{-n}.x,a), && \weight: \chi(k_1^{n})
\\
\left(k_1^{-(n+1)}.1, k_1^{-n}.1\right)_\st\times\{b\} & \to (-1,1)_\st\times \{b\}, & (x,b) & \mapsto (k_1^n.x, b), && \weight: \chi(k_1^{-n})
\\
\left(k_3^{-n}.\infty, k_3^{-(n+1)}.\infty\right)_\st\times \{b\} & \to (-\infty,0)_\st\times\{b\}, & (x,b) & \mapsto (k_3^n.x, b), && \weight: \chi(k_3^{-n}).
\end{align*}
Note that 
\[
 \bigcup_{n\in\N} \left(k_2^{-(n+1)}.\infty, k_2^{-n}.\infty\right)_\st\times \{a\} = (-1,0)_\st\times \{a\},
\]
and analogously for the other infinite branches. As before, we identify $f\in \Fct(D;V)$ with the function vector $(f_1, \ldots, f_6)^\top$, where
\begin{align*}
f_1 & \sceq f \cdot 1_{(-1,0)_\st\times\{a\}}
\\
f_2 & \sceq f \cdot 1_{(0,1)_\st\times\{a\}}
\\
f_3 & \sceq f \cdot 1_{(1,\infty)_\st\times\{a\}}
\\
f_4 & \sceq f \cdot 1_{(-\infty,-1)_\st\times\{b\}}
\\
f_5 & \sceq f \cdot 1_{(-1,0)_\st\times\{b\}}
\\
f_6 & \sceq f \cdot 1_{(0,1)_\st\times\{b\}},
\end{align*}
and we may omit the $\times\{a\}$ and $\times\{b\}$. Then the associated transfer operator is (formally) represented by the matrix 
\[
 \mc L_{s,\chi} = 
\begin{pmatrix}
& & \sum\limits_{n\in\N} \alpha_s(k_1^{-n}) & & \alpha_s(k_2) & 
\\
\sum\limits_{n\in\N} \alpha_s(k_2^n) & & \sum\limits_{n\in\N}\alpha_s(k_1^{-n}) & & \alpha_s(k_2) & 
\\
\sum\limits_{n\in\N} \alpha_s(k_2^n) & & & & \alpha_s(k_2) &
\\
& \alpha_s(k_3) & & & & \sum\limits_{n\in\N} \alpha_s(k_3^n)
\\
& \alpha_s(k_3) & & \sum\limits_{n\in\N} \alpha_s(k_1^n) & & \sum\limits_{n\in\N} \alpha_s(k_3^n)
\\
& \alpha_s(k_3) & & \sum\limits_{n\in\N} \alpha_s(k_1^n) & &  
\end{pmatrix}.
\]

For $a,b\in P^1_\R$, $a\not=b$, let $B(a,b)$ denote the open ball in $P^1(\C)$ which passes through $a$ and $b$. If $a>b$, then $B(a,b)$ is understood to contain $\infty$.

Let 
\begin{align*}
 \mc D_1 &\sceq B\left( -\frac32, \frac12 \right),\quad \mc D_2 \sceq B\left( -\frac12, \frac52\right), \quad  \mc D_3 \sceq B(0,-10),
\\
\mc D_4 &\sceq B(10,0),\quad  \mc D_5 \sceq B\left(-\frac52, \frac12\right),\quad \mc D_6 \sceq B\left(-\frac12, \frac32\right).
\end{align*}
Let 
\[
 B(\mc D;V) \sceq \bigoplus_{j=1}^6 B(\mc D_j;V).
\]

Analogously to Theorems~\ref{TO1} and \ref{ZetaFredholm} one shows the following relations between the transfer operator family and the Selberg zeta functions. Note that the order of poles differs from that in Theorems~\ref{TO1} and \ref{ZetaFredholm}.

\begin{thm}
\begin{enumerate}[{\rm (i)}]
\item For $\Rea s > \frac12$, the transfer operator $\mc L_{s,\chi}$ is an operator on $B(\mc D;V)$ and as such nuclear of order $0$.
\item For $\Rea s > 1$, we have $Z(s,\chi) = \det(1-\mc L_{s,\chi})$.
\item The map $s\mapsto \mc L_{s,\chi}$ extends meromorphically to all of $\C$. The poles are all simple and contained in $\frac12(1-\N_0)$. For each pole $s_0$, there is a neighborhood $U$ of $s_0$ such that the meromorphic extension is of the form 
\[
 \mc L_{s,\chi} = \frac{1}{s-s_0} \mc A_s + \mc B_s,
\]
where the operators $\mc A_s$ and $\mc B_s$ are holomorphic on $U$, and $\mc A_s$ is of rank at most $6\sd(\chi)$.
\item The Fredholm determinant $\det(1-\mc L_{s,\chi})$ extends to a meromorphic function on $\C$ whose poles are contained in $\frac12(1-\N_0)$. The order of a pole is at most $6\sd(\chi)$.
\end{enumerate}
\end{thm}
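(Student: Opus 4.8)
The plan is to run, step by step, the argument behind Theorems~\ref{TO1} and~\ref{ZetaFredholm}, now for the six-component Banach space $B(\mc D;V)=\bigoplus_{j=1}^{6}B(\mc D_j;V)$ and with the three relevant parabolic elements $k_1=T$, $k_2=P_{-1}$ and $k_3$. The one genuinely new feature is that $k_1$ stabilises the cusp $\infty$ rather than a finite point, so the infinite branches built from $k_1^{\pm n}$ have to be treated in the chart $w=1/z$ at $\infty$, in which $k_1$ becomes the parabolic $w\mapsto w/(1\pm 2w)$ fixing $w=0$. First I would establish for $\mc D_1,\dots,\mc D_6$ the analogues of conditions (i)--(x) of \cite[Proposition~4.2]{Moeller_Pohl}: the finite branches $k_2^{-1},k_3^{-1}$ send the closures of the relevant discs strictly inside their targets; for every $n\in\N$ the inverse branches $k_2^{n},k_3^{n},k_1^{\pm n}$ do likewise, together with a uniform contraction estimate, checked in the $w$-chart for the $k_1^{\pm n}$-branches; and the half-plane conditions on $\Rea z$ (and on $\Rea w$ in the $k_1$-case) hold that will later force the arguments of all occurring Lerch/Hurwitz zeta functions into $\{\Rea>0\}$, which is exactly the purpose of the asymmetric radii in $\mc D_3=B(0,-10)$ and $\mc D_4=B(10,0)$. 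The symmetry $J=\textbmat{-1}{0}{0}{1}$, which interchanges $\mc D_1\leftrightarrow\mc D_6$, $\mc D_2\leftrightarrow\mc D_5$, $\mc D_3\leftrightarrow\mc D_4$ and the $a$- and $b$-components, halves the verifications.

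Given these inclusions, part~(i) follows by the same argument as in \cite{Moeller_Pohl}: each finite entry $\alpha_s(k_2),\alpha_s(k_3)$ is a weighted composition operator sending the closure of one disc holomorphically into another, hence nuclear of order $0$ on the appropriate summand, and each infinite entry $\sum_{n}\alpha_s(k_i^{\pm n})$ is, for $\Rea s>\tfrac12$, an absolutely convergent series of such operators whose $n$-th term has nuclear norm $O(n^{-2\Rea s})$ by the uniform contraction; a finite matrix of nuclear order-$0$ operators is nuclear of order $0$. For part~(ii) I would first record the combinatorial input: the alphabet $\{k_1^{\pm m},k_2^{m},k_3^{m}:m\in\N\}$ of submap-weights is ``free'' in the sense of \cite{Moeller_Pohl} --- legal reduced words map injectively into $\Gamma$, by the ping-pong visible in the disc picture --- so that by the analogue of \cite[Proposition~4.1]{Moeller_Pohl} one has $\Tr\mc L_{s,\chi}^{n}=\sum_{h}\Tr\bigl(\chi(h)\tau_s(h)\bigr)$, summed over the regular words $h$ of length $n$, and every hyperbolic conjugacy class of $\Gamma$ with word length $w$ and primitive word length $p$ has exactly $p$ regular representatives among the length-$w$ words. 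Inserting Lemma~\ref{traces} and reorganising the sum over $[\Gamma]_h$ exactly as in the proof of Theorem~\ref{ZetaFredholm}\eqref{zeta1} gives $\ln\det(1-\mc L_{s,\chi})=\ln Z(s,\chi)$ for $\Rea s>1$.

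For parts~(iii) and~(iv) each infinite entry $\sum_{n}\alpha_s(k_i^{\pm n})$ is analysed exactly like the operator $L_s$ in the proof of Theorem~\ref{TO1}\eqref{TOextension}: one diagonalises the unitary $\chi(k_i)$, and on each scalar component $\sum_{n}e^{2\pi i n a}\tau_s(k_i^{\pm n})$ splits $L_s=L_s\circ(1-P_M)+L_s\circ P_M$, where $P_M$ subtracts the degree-$M$ Taylor polynomial at the parabolic fixed point (at $w=0$ in the $w$-chart when $i=1$); then $L_s\circ(1-P_M)$ is a finite combination of Lerch/Hurwitz zeta functions $\zeta(2s+k,a,w(z))$, $0\le k\le M$, meromorphic in $s$ with only simple poles at $s=(1-k)/2$ for $a\in\Z$ and entire for $a\notin\Z$, and of finite rank, whereas $L_s\circ P_M$ converges and is nuclear of order $0$ for $\Rea s>-M/2$. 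Letting $M\to\infty$ yields the meromorphic continuation of $\mc L_{s,\chi}$ to $\C$ with only simple poles, all in $\tfrac12(1-\N_0)$; the residue at a pole $s_0$ is the sum of the residues of the infinite entries, and as only the integral exponents in the diagonalisations contribute, each such residue has rank at most $\dim\ker(\chi(k_i)-1)\le\sd(\chi)$ per component, so $\mc A_s$ has rank at most $6\,\sd(\chi)$. Part~(iv) is then the standard estimate that a Fredholm determinant of a family $\tfrac{1}{s-s_0}\mc A_s+\mc B_s$ with $\mc A_s,\mc B_s$ holomorphic near $s_0$ and $\rank\mc A_s\le N$ has a pole of order at most $N$ at $s_0$, applied with $N=6\,\sd(\chi)$, together with the identity of part~(ii) continued analytically.

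The main obstacle is the treatment of the $k_1^{\pm n}$-branches at $\infty$: unlike $k_2,k_3$, whose fixed points $-1,1$ lie in the interiors of bounded discs, $k_1$ fixes $\infty\in\mc D_3\cap\mc D_4$, so the disc conditions and the residue analysis have to be carried out in the chart $w=1/z$, where one must check that the change of chart preserves the inclusions established in Step~1 and that the zeta-argument $w(z)$ stays in $\{\Rea>0\}$ uniformly over $\overline{\mc D_1}\cup\overline{\mc D_2}$ (\resp{} over $\overline{\mc D_5}\cup\overline{\mc D_6}$, by the $J$-symmetry) --- which is what the radii $\pm10$ in $\mc D_3,\mc D_4$ are arranged for --- and then read off the resulting Hurwitz zeta contribution in that chart. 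A secondary but still necessary point is establishing the freeness of the six-letter alphabet above and the regular-word/hyperbolic-conjugacy-class dictionary for this specific accelerated system, that is, the analogues for the Theta group of the combinatorial lemmas of \cite{Moeller_Pohl}.
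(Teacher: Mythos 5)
The paper proves this theorem only by the one-line remark that it follows ``analogously to Theorems~\ref{TO1} and \ref{ZetaFredholm}'', and your proposal is a correct, faithful expansion of exactly that analogy: disc inclusions and contraction for nuclearity, the free-semigroup/regular-word dictionary plus Lemma~\ref{traces} for the determinant identity, and the Taylor-subtraction/Lerch-zeta splitting for the continuation and residue ranks. You have also correctly isolated the one genuinely new point -- that $k_1=T$ fixes the cusp $\infty\in\overline{\mc D_3}\cap\overline{\mc D_4}$, so that the $k_1^{\pm n}$-entries (whose Jacobian factors $j_s(k_1^{\pm n},\cdot)$ are identically $1$ in the standard chart) only acquire the needed $n^{-2s}$ decay after transporting those two components to the chart $w=1/z$ via the $\tau_s$-cocycle, where $k_1^{\pm1}$ becomes $w\mapsto w/(1\pm 2w)$ and the analysis of Theorem~\ref{TO1} applies verbatim.
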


\subsection{Non-cofinite Hecke triangle groups}

Let $\Gamma\sceq \Gamma_\lambda$ be a non-cofinite Hecke triangle group, thus $\lambda > 2$. Recall the elements $a_1, a_2, a_3\in \Gamma$ from Section~\ref{sec:slow_infinite}. The fast discrete dynamical system $(D,F)$ for $\Gamma$ is defined on 
\[
 D \sceq \big( (-1,\infty)_\st\times\{a\} \big) \cup \big( (-\infty,1)_\st\times \{b\} \big)
\]
and given by the weighted submaps
\begin{align*}
(-1,0)_\st\times \{a\} & \to (-1,\infty)_\st\times\{a\}, & (x,a) & \mapsto (a_2.x,a), && \weight: \chi(a_2^{-1}),
\\
(0,1)_\st\times\{a\} & \to (-\infty,1)_\st\times\{b\}, & (x,a) & \mapsto (a_3.x,b), && \weight: \chi(a_3^{-1}),
\\
(-1,0)_\st\times \{b\} & \to (-1,\infty)_\st\times \{a\}, & (x,b) & \mapsto (a_2.x,a), && \weight: \chi(a_2^{-1}),
\\
(0,1)_\st\times \{b\} & \to (-\infty,1)_\st\times \{b\}, & (x,b) & \mapsto (a_3.x,b), && \weight: \chi(a_3^{-1}),
\end{align*}
and, for $n\in\N$,
\begin{align*}
(-1+n\lambda, -1 + (n+1)\lambda)_\st\times \{a\} & \to (-1, -1+\lambda)_\st\times\{a\},
\\ (x,a) & \mapsto (a_1^{-n}.x,a), \qquad \weight: \chi(a_1^{n}),
\\
(1-(n+1)\lambda, 1-n\lambda)_\st\times\{b\} & \to (1-\lambda,1)_\st\times\{b\}, 
\\
(x,b) & \mapsto (a_1^n.x,b), 
\qquad \weight: \chi(a_1^{-n}).
\end{align*}
The associated transfer operator with parameter $s\in\C$ is represented by the matrix
\[
\mc L_{s,\chi} = 
\begin{pmatrix}
\alpha_s(a_2) & \sum\limits_{n\in\N} \alpha_s(a_1^{-n}) & & \alpha_s(a_2)
\\
\alpha_s(a_2) &  & & \alpha_s(a_2)
\\
\alpha_s(a_3) &  & & \alpha_s(a_3)
\\
\alpha_s(a_3) &  & \sum\limits_{n\in\N} \alpha_s(a_1^{n})  & \alpha_s(a_3)
\end{pmatrix}
\]
acting formally on the function vectors
\[
 f = 
\begin{pmatrix}
f_1\colon (-1,1)_\st\to V
\\
f_2 \colon (-1+\lambda,\infty)_\st\to V
\\
f_3\colon (-\infty, 1-\lambda)_\st\to V
\\
f_4\colon (-1,1)_\st\to V
\end{pmatrix}.
\]
Let 
\[
 \mc D_1 \sceq B(-1,1), \quad \mc D_2 \sceq B\left( \frac{5\lambda-4}{6}, -\frac{\lambda}{2}\right), \quad\text{and}\quad \mc D_3 \sceq B\left( \frac{\lambda}{2}, \frac{4-5\lambda}{6}\right),
\]
and set
\[
 B(\mc D;V) \sceq B(\mc D_1;V) \oplus B(\mc D_2;V) \oplus B(\mc D_3;V) \oplus B(\mc D_1;V).
\]

Analogously to Theorems~\ref{TO1} and \ref{ZetaFredholm} the following results are shown.

\begin{thm}\label{infinite_fast}
\begin{enumerate}[{\rm (i)}]
\item For $\Rea s > \frac12$, the transfer operator $\mc L_{s,\chi}$ defines a nuclear operator on $B(\mc D;V)$ of order $0$. The map $s\mapsto \mc L_{s,\chi}$ extends meromorphically to all of $\C$ with possible poles located at $s=(1-k)/2$, $k\in\N_0$. All poles are simple. For each pole $s_0$, there is a neighborhood $U$ of $s_0$ such that $\mc L_{s,\chi}$ is of the form
\[
 \mc L_{s,\chi} = \frac{1}{s-s_0}\mc A_s + \mc B_s,
\]
where the operators $\mc A_s$ and $\mc B_s$ are holomorphic on $U$, and $\mc A_s$ is of rank at most $2\sd(\chi)$.
\item For $\Rea s > \max( \dim\Lambda(\Gamma), \frac12)$ we have $Z(s,\chi) = \det(1-\mc L_{s,\chi})$. Moreover, the Fredholm determinant $s\mapsto \det(1-\mc L_{s,\chi})$ extends to a meromorphic function on $\C$ with possible poles located in $\frac12(1-\N_0)$. The order of a pole is at most $2\sd(\chi)$.
\end{enumerate}

\end{thm}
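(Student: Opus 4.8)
\emph{Approach.} The plan is to run, step for step, the proofs of Theorems~\ref{TO1} and~\ref{ZetaFredholm}. The only structural differences are that here there are two parabolic ``towers'' instead of four (which is why the rank and pole bound drops from $4\sd$ to $2\sd$) and that the parabolic fixed point $\infty$ is kept inside the discs $\mc D_2$ and $\mc D_3$, rather than being conjugated to a finite point by an auxiliary element $\mc T$. The first task is therefore to record, for $\mc D_1,\mc D_2,\mc D_3$, the analogues of the properties behind \cite[Proposition~4.2]{Moeller_Pohl}: the finite-term branch inverses $a_2^{-1},a_3^{-1}$ send the closures of the relevant discs compactly into the source discs; $a_1^{\pm n}.\overline{\mc D_1}\subseteq\mc D_2$ resp.\ $\subseteq\mc D_3$ for every $n\in\N$; the symmetry $J=\textbmat{-1}{0}{0}{1}$ permutes the discs ($J.\mc D_1=\mc D_1$, $J.\mc D_2=\mc D_3$, using $-\tfrac{5\lambda-4}{6}=\tfrac{4-5\lambda}{6}$); and each hyperbolic composition of branches has a single attracting fixed point in the disc it acts on. These are elementary computations with $a_1=\textbmat{1}{\lambda}{0}{1}$, $a_2=\textbmat{\lambda}{1}{-1}{0}$, $a_3=\textbmat{\lambda}{-1}{1}{0}$, of the same flavour as those in \cite{Moeller_Pohl}, and belong to the construction of \cite{Pohl_hecke_infinite}.

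\emph{Part (i).} The finite-term entries $\alpha_s(a_2),\alpha_s(a_3)$ are composition operators attached to holomorphic maps strictly contracting discs, hence nuclear of order $0$ on the relevant $B(\mc D_j;V)$ by the standard argument (cf.\ \cite{Mayer_thermo,Moeller_Pohl}). For the towers $\sum_{n\in\N}\alpha_s(a_1^{-n})$ and $\sum_{n\in\N}\alpha_s(a_1^{n})$ one uses that elements of $B(\mc D_2;V)$ and $B(\mc D_3;V)$ are, in the weight-$2s$ interpretation under which $\tau_s$ extends to these spaces, regular at the parabolic fixed point $\infty\in\mc D_2\cap\mc D_3$, hence vanish there to order $2s$; evaluating such a section at $z\pm n\lambda\to\infty$ therefore yields $O(n^{-2\Rea s})$ (this is the exact counterpart of the factor $j_s(h_1^{-n},z)\asymp n^{-2\Rea s}$ in the proof of Theorem~\ref{TO1}). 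Thus each tower converges in nuclear norm for $\Rea s>\tfrac12$, and so $\mc L_{s,\chi}$ is nuclear of order $0$ there. For the meromorphic continuation one repeats the argument of Theorem~\ref{TO1}: diagonalize the unitary $\chi(a_1)=\diag(e^{2\pi i a_1},\dots,e^{2\pi i a_d})$, so that $\sd(\chi)$ is the number of integral $a_j$, and reduce each scalar tower to $L_s=\sum_{n\in\N}e^{2\pi i na}\tau_s(a_1^{\mp n})$; split $L_s=L_s\circ(1-P_M)+L_s\circ P_M$, where $P_M$ subtracts the degree-$M$ Taylor polynomial at the fixed point in the chart $w=1/z$. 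The first summand is finite rank with coefficients given through the Lerch zeta function $\zeta(\cdot,a,\cdot)$, entire for $a\notin\Z$ and reducing to a Hurwitz zeta function with a simple pole for $a\in\Z$, contributing simple poles at $s=(1-k)/2$, $k=0,\dots,M$; the second summand converges for $\Rea s>-M/2$ and is nuclear since $P_M$ is bounded. Letting $M\to\infty$ gives the meromorphic extension to $\C$ with simple poles in $\tfrac12(1-\N_0)$. At a pole $s_0$ the residue of each tower involves only the single Taylor coefficient at the fixed point restricted to the $\le\sd(\chi)$ integral components; there are two towers, so $\mc A_{s_0}$ has rank $\le 2\sd(\chi)$.

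\emph{Part (ii).} Nuclearity of order $0$ gives that $\det(1-\mc L_{s,\chi})$ exists, is holomorphic wherever $\mc L_{s,\chi}$ is, and inherits the meromorphic continuation with the order of each pole bounded by the rank of the corresponding residue, i.e.\ by $2\sd(\chi)$. For the zeta identity, for $\Rea s$ large expand $\log\det(1-\mc L_{s,\chi})=-\sum_{m\ge1}\tfrac1m\Tr\mc L_{s,\chi}^{m}$ and invoke the symbolic dynamics of \cite{Pohl_hecke_infinite}: the word semigroup generated by $\{a_2,a_3\}\cup\{a_1^{n}:n\in\N\}$ (with the appropriate sidedness) is free, every hyperbolic conjugacy class of $\Gamma$ has regular representatives in it, and a class $[g]$ has exactly $p(g)$ regular representatives of word length $w(g)=n(g)p(g)$. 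This yields $\Tr\mc L_{s,\chi}^{m}=\sum_{h\in P_m}\Tr(\chi(h)\tau_s(h))$ with $P_m$ the set of regular words of length $m$; Lemma~\ref{traces}---applicable because each hyperbolic word has a unique fixed point in its disc by the nesting above---turns each summand into $\tfrac{N(h)^{-s}}{1-N(h)^{-1}}\tr\chi(h)$, and regrouping the double sum over primitive hyperbolic classes and their powers reproduces $\log Z(s,\chi)$, as in the proof of Theorem~\ref{ZetaFredholm}. Since the zeta product converges for $\Rea s>\dim\Lambda(\Gamma)$ and the trace series requires $\Rea s>\tfrac12$ (more precisely $\Rea s>\dim\Lambda(\Gamma)$, as the spectral radius is governed by the topological pressure), the identity holds on $\Rea s>\max(\dim\Lambda(\Gamma),\tfrac12)$; the meromorphic continuation of part (i), together with the known continuation of $Z(s,\chi)$, then carries the identity to all of $\C$, with poles of $\det(1-\mc L_{s,\chi})$ in $\tfrac12(1-\N_0)$ of order $\le 2\sd(\chi)$.

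\emph{Main obstacle.} Beyond transcribing the proof of Theorem~\ref{TO1}, the real work is twofold. First, keeping $\infty$ inside $\mc D_2,\mc D_3$ forces one to handle the chart change at $\infty$ cleanly: it is this chart change, not a conjugation as in Theorem~\ref{TO1}, that produces the $n^{-2s}$ decay of the tower terms, hence both the nuclearity threshold $\Rea s>\tfrac12$ and the Hurwitz/Lerch poles. Second---and this is where the genuine effort sits---the identity $\det(1-\mc L_{s,\chi})=Z(s,\chi)$ rests on the coding results of \cite{Pohl_hecke_infinite}: freeness of the word semigroup, the correspondence between hyperbolic conjugacy classes and regular words up to cyclic permutation, and the length bookkeeping $w(g)=n(g)p(g)$. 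Establishing (or carefully citing) these, together with the localization of the operator traces at the unique branch fixed points via Lemma~\ref{traces}, is the crux of the argument.
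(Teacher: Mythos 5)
Your proposal is correct and follows essentially the same route as the paper, whose entire proof of this theorem is the remark that it is shown ``analogously to Theorems~\ref{TO1} and \ref{ZetaFredholm}''; your elaboration faithfully reproduces that argument, including the two genuine points where the analogy is not literal (the parabolic fixed point sitting at $\infty\in\mc D_2\cap\mc D_3$, handled via the weight-$2s$ chart change at $\infty$ rather than a conjugation by $\mc T$, and the drop from four parabolic towers to two, which yields the rank bound $2\sd(\chi)$). Nothing further is needed.
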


\section{Symmetry reduction}\label{symmetryreduction}

Suppose that the representation $(V,\chi)$ of $\Gamma_\lambda$ decomposes into a direct (finite) sum 
\begin{equation}\label{decomp}
 \chi = \bigoplus_{j=1}^m \chi_j
\end{equation}
of representations $(V_j,\chi_j)$ of $\Gamma_\lambda$. Then the Selberg zeta function factors accordingly \cite[Theorem~7.2]{Venkov_book2}:
\begin{equation}\label{selbergfactor}
  Z(s,\chi) = \prod_{j=1}^m Z(s,\chi_j).
\end{equation}
The structure of the interaction of the representation $\chi$ with the action $\tau_s$ yields that this kind of factorization already happens at the level of the transfer operators, and that \eqref{selbergfactor} is an immediate consequence of this more general result.

\begin{thm}
Suppose that $\chi$ decomposes as in \eqref{decomp}, and let $\mc L_{s,\chi}$ be a transfer operator of the form as in Section~\ref{slow} or \ref{fast}. Then 
\begin{equation}\label{splitTO}
 \mc L_{s,\chi} = \bigoplus_{j=1}^m \mc L_{s,\chi_j}.
\end{equation}
Moreover, 
\begin{equation}\label{splitZeta}
 Z(s,\chi) = \prod_{j=1}^m Z(s,\chi_j).
\end{equation}
\end{thm}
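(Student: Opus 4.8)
The plan is to exploit the fact that the action $\tau_s$ is completely insensitive to the coefficient space $V$. For a function $f\colon U\to V$ on some $U\subseteq P^1(\R)$ we have, by definition, $\tau_s^V(g)f(x) = j_s(g^{-1},x)\,f(g^{-1}.x)$, so under the canonical identification $\Fct(U;V)\cong\Fct(U;\C)\otimes V$ (valid since $V$ is finite-dimensional) the operator $\tau_s^V(g)$ is simply $\tau_s^\C(g)\otimes\id_V$. Hence
\[
 \alpha_s(g) \;=\; \chi(g)\,\tau_s^V(g) \;=\; \tau_s^\C(g)\otimes\chi(g),
\]
the two factors acting on the two tensor slots. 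If $\chi=\bigoplus_{j=1}^m\chi_j$ on $V=\bigoplus_{j=1}^m V_j$, then $\chi(g)=\bigoplus_j\chi_j(g)$, and therefore $\alpha_s(g)=\bigoplus_j\big(\tau_s^\C(g)\otimes\chi_j(g)\big)=\bigoplus_j\alpha_s^{V_j}(g)$, the $j$-th summand acting on $\Fct(U;V_j)$.

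First I would record that every transfer operator occurring in Sections~\ref{slow} and \ref{fast} is built in the same uniform manner: it acts on a direct sum $\bigoplus_\alpha \Fct(\mc C_\alpha;V)$ (or on $\bigoplus_\alpha B(\mc C_\alpha;V)$ in the fast case) indexed by the finitely many components of $D$, and is represented by a matrix whose entries are finite or convergent sums of operators $\alpha_s(g)$ with $g\in\Gamma_\lambda$; the index set and the shape of this matrix are independent of the representation. Combining the decomposition of each $\alpha_s(g)$ from the previous paragraph with the reordering of double direct sums
\[
 \bigoplus_\alpha \Fct(\mc C_\alpha;V) \;=\; \bigoplus_\alpha\bigoplus_{j=1}^m \Fct(\mc C_\alpha;V_j) \;\cong\; \bigoplus_{j=1}^m\bigoplus_\alpha \Fct(\mc C_\alpha;V_j),
\]
one sees that the matrix for $\mc L_{s,\chi}$ becomes block-diagonal along the $j$-index, each diagonal block being exactly the matrix for $\mc L_{s,\chi_j}$; this is \eqref{splitTO}. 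Since each $\chi_j$ inherits unitarity from $\chi$, all analytic statements of Section~\ref{fast} (nuclearity of order $0$, meromorphic continuation, rank bounds) apply verbatim to each $\mc L_{s,\chi_j}$, and the rank/pole-order bounds for $\mc L_{s,\chi}$ follow by additivity.

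To deduce \eqref{splitZeta}, take $\mc L_{s,\chi}$ to be the relevant fast transfer operator. For $\Rea s$ sufficiently large one has $Z(s,\chi)=\det(1-\mc L_{s,\chi})$ by Theorem~\ref{ZetaFredholm}\eqref{zeta1} and its analogues for the Theta group and for the non-cofinite Hecke triangle groups. Using \eqref{splitTO} together with the multiplicativity of the Fredholm determinant over finite direct sums of nuclear operators,
\[
 \det\Big(1-\bigoplus_{j=1}^m\mc L_{s,\chi_j}\Big) \;=\; \prod_{j=1}^m\det\big(1-\mc L_{s,\chi_j}\big),
\]
one obtains $Z(s,\chi)=\prod_{j=1}^m\det(1-\mc L_{s,\chi_j})=\prod_{j=1}^m Z(s,\chi_j)$ on the half-plane of convergence, and the identity then holds on all of $\C$ by meromorphic continuation (Theorem~\ref{ZetaFredholm}\eqref{FDextension} and its analogues).

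There is no genuine mathematical difficulty here; the only thing requiring care is the bookkeeping in the second paragraph, namely making the identification $B(\mc C;V)\cong\bigoplus_j B(\mc C;V_j)$ and the interchange $\bigoplus_\alpha\bigoplus_j\cong\bigoplus_j\bigoplus_\alpha$ precise, and checking that under it no matrix entry $\alpha_s(g)$ ever couples two distinct summands $V_j$ — which is immediate from $\chi(g)=\bigoplus_j\chi_j(g)$ and the fact that $\tau_s$ does not touch the coefficient space at all.
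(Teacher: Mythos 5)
Your proposal is correct and follows essentially the same route as the paper: the key observation that $\tau_s$ acts only on the argument while $\chi(g)$ acts only on the values, so that $\alpha_s^V(g)=\bigoplus_j\alpha_s^{V_j}(g)$, and then the multiplicativity of the Fredholm determinant over the resulting block decomposition. Your tensor-product phrasing and the explicit reordering of the double direct sum are just a more detailed write-up of the same argument.
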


\begin{proof}
Obviously, we have 
\[
 \Fct(D;V) = \bigoplus_{j=1}^m \Fct(D;V_j) \quad\text{and}\quad B(\mc D; V) = \bigoplus_{j=1}^m B(\mc D; V_j).
\]
For any $g\in \Gamma_\lambda$ and any function $f$, the map $\tau_s(g)$ only acts on the argument of $f$, while $\chi(g)$ only acts on the vector $f(z)$. Therefore, 
\[
 \alpha_s^V(g) = \chi(g)\tau_s^V(g) = \bigoplus_{j=1}^m \chi_j(g) \tau_s^{V_j}(g) = \bigoplus_{j=1}^m \alpha_s^{V_j}(g).
\]
This implies \eqref{splitTO}. Then \eqref{splitZeta} follows immediately from
\begin{align*}
 Z(s,\chi) &= \det(1 - \mc L_{s,\chi}) 
\\
& = \det\big( 1 - \bigoplus_{j=1}^m \mc L_{s,\chi_j} \big)= \prod_{j=1}^m \det(1- \mc L_{s,\chi_j})
\\
& = \prod_{j=1}^m Z(s,\chi_j).
\end{align*}
\end{proof}

\section{Billiard flows}\label{billiard}

Recall the elements 
\[
 Q = \bmat{0}{1}{1}{0} \quad\text{and}\quad J = \bmat{-1}{0}{0}{1}
\]
in $\PGL_2(\R)$. The discrete dynamical systems and the transfer operators from Sections~\ref{slow} and \ref{fast} commute with the action of $Q$ and $\alpha_s(Q)$ for $\Gamma_\lambda$ with $\lambda<2$ respectively with the action of $J$ and $\alpha_s(J)$ for $\Gamma_\lambda$ with $\lambda\geq 2$. As shown in \cite{Moeller_Pohl} for $\Gamma_\lambda$ with $\lambda<2$ with the trivial representation $(\C,\id)$ this exterior symmetry allows us to characterize even/odd Maass cusp forms as those period functions that are invariant/anti-invariant under $\tau_s(Q)$, and it results in a factorization of the Selberg zeta function.

An approach from a higher point of view was taken on in \cite{Pohl_hecke_spectral} and \cite{Pohl_hecke_infinite}, where $\Gamma_\lambda$ was extended by these symmetries and then the billiard flow was considered. In the following we briefly survey this approach and extend it to include non-trivial representations.

Let 
\[
 \wt\Gamma_\lambda \sceq \left\langle Q, \Gamma_\lambda\right\rangle = \left\langle J, \Gamma_\lambda\right\rangle \ \leq \PGL_2(\R)
\]
denote the triangle group underlying the Hecke triangle group. Further, we extend the representation $(V,\chi)$ of $\Gamma_\lambda$ to a representation of $\wt\Gamma_\lambda$, also denoted by $\chi$, by defining $\chi(Q)$ (or, equivalently, $\chi(J)$).

We consider the billiard flow on $\wt\Gamma_\lambda\backslash\h$, that is, the geodesic flow on $\wt\Gamma_\lambda\backslash\h$ considered as an orbifold. We inherit the cross sections for the geodesic flow on $\Gamma_\lambda\backslash\h$ to $\wt\Gamma_\lambda\backslash\h$ by taking ``half'' of it and then use the arising discrete dynamical systems to define the transfer operator families. For example, the cross section for $\Gamma_\lambda$, $\lambda\geq 2$, is only $C'_a$ from Section~\ref{cross2} respectively \ref{sec:slow_infinite}. For $\Gamma_\lambda$ with $\lambda= 2\cos \pi/q$ and $q$ odd, it is
\begin{equation}\label{billcross}
 C' \sceq \{ v\in S\h \mid \Rea\base(v) = 0,\ \gamma_v(\infty)\in (0,1]_\st,\ \gamma_v(-\infty)\in (-\infty,0)_\st\}.
\end{equation}
For $\Gamma_\lambda$ with $\lambda= 2\cos \pi/q$ and $q$ even, the point $1\in\R$ is an endpoint of periodic geodesics. For that reason one cannot simply use \eqref{billcross} as cross section but rather needs an average over all reasonable choices for $C'$ to achieve that the tangent vectors belonging to these special periodic geodesics are ``inner vectors''. We refer to \cite{Pohl_hecke_spectral} for details. The upshot is that the contribution from the element $g_{q/2}$ gets an additional weight of $1/2$. We also refer to \cite{Pohl_hecke_spectral, Pohl_hecke_infinite} for the explicit formulas for the arising discrete dynamical systems which only need to be extended by the obvious weights. The discrete dynamical systems for $\wt\Gamma_2$ are constructed in an analogous way. 

To keep this article to a reasonable length, we only state the results for some cofinite Hecke triangle groups. For the others, analogous results are true. Recall that for $g\in\wt\Gamma_\lambda$ we have $N(g) = N(g^2)^{1/2}$.

\begin{thm}\label{billTO}
Let $\Gamma_\lambda$ be a cofinite Hecke triangle group with $\lambda = 2\cos\frac{\pi}{q}$, $q$ odd. Set $m \sceq \frac{q+1}{2}$.
\begin{enumerate}[{\rm (i)}]
\item The transfer operator associated to the slow discrete dynamical system for $\wt\Gamma_\lambda$ is
\[
 \mc L^{\text{slow}}_{s,\chi} = \sum_{k=m}^{q-1} \alpha_s(g_k) + \alpha_s(Qg_k),
\]
acting on $\Fct( (0,1); V)$.
\item\label{sonder} Suppose that $V=\C$ and $\chi$ acts trivially on $\Gamma_\lambda$.
\begin{enumerate}[{\rm (a)}]
\item If $\chi(Q) = 1$ and $\Rea s \in (0,1)$, then the even Maass cusp forms for $\Gamma_\lambda$ are isomorphic to the eigenfunctions with eigenvalue $1$ of $\mc L^{\text{slow}}_{s,\chi}$ that satisfy the regularity conditions from Theorem~\ref{MP_slow}.
\item If $\chi(Q) = -1$ and $\Rea s \in (0,1)$, then the odd Maass cusp forms for $\Gamma_\lambda$ are isomorphic to the eigenfunctions with eigenvalue $1$ of $\mc L^{\text{slow}}_{s,\chi}$ that satisfy the regularity conditions from Theorem~\ref{MP_slow}. 
\end{enumerate}
\item The transfer operator associated to the fast discrete dynamical system for $\wt\Gamma_\lambda$ is
\[
 \mc L^{\text{fast}}_{s,\chi} = 
\begin{pmatrix}
\sum\limits_{n\in\N} \alpha_s(Qg_{q-1}^n) & \sum\limits_{k=m}^{q-2} \alpha_s(g_k) + \alpha_s(Qg_k)
\\
\sum\limits_{n\in\N} \alpha_s(g_{q-1}^n) + \alpha_s(Qg_{q-1}^n) &  \sum\limits_{k=m}^{q-2} \alpha_s(g_k) + \alpha_s(Qg_k)
\end{pmatrix},
\]
acting on $B(\mc T^{-1}.\mc E_{q-1};V)\oplus B(\mc T^{-1}.\mc E_r;V)$. For $\Rea s > \frac12$, they define nuclear operators of order $0$, and $s\mapsto \mc L^{\text{fast}}_{s,\chi}$ extends meromorphically to all of $\C$ with poles located as in Theorem~\ref{TO1} of order at most $2\sd(\chi)$.
\item For $\Rea s > 1$ we have
\[
 \det(1-\mc L^{\text{fast}}_{s,\chi}) = \prod_{[g]\in [\wt\Gamma_\lambda]_p} \prod_{k=0}^\infty \det\left( 1 - \chi(g) \det g^k N(g)^{-(s+k)}\right),
\]
which is a dynamical zeta function.
\end{enumerate}
\end{thm}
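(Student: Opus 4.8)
The plan is to follow the template of the proof of Theorem~\ref{ZetaFredholm}\eqref{zeta1}, the only genuinely new ingredient being a trace computation that accounts for the orientation-reversing (glide-reflection) elements that now occur in the symbolic dynamics for $\wt\Gamma_\lambda$. By part~(iii) of this theorem the operator $\mc L^{\text{fast}}_{s,\chi}$ is nuclear of order $0$ for $\Rea s>\tfrac12$, and for $\Rea s>1$ its spectral radius drops below $1$ (the usual estimate via the prime geodesic counting for the lattice $\wt\Gamma_\lambda$, which also makes the product on the right-hand side converge absolutely). So I would first write
\[
 \det\big(1-\mc L^{\text{fast}}_{s,\chi}\big)=\exp\Big(-\sum_{n\in\N}\tfrac1n\Tr\big(\mc L^{\text{fast}}_{s,\chi}\big)^n\Big),
\]
and then, following \cite[Prop.~4.1]{Moeller_Pohl} adapted to the billiard coding of \cite{Pohl_hecke_spectral}, expand the $n$-th power of the defining matrix into a sum over the set $P_n$ of regular words of length $n$ in the alphabet of letters occurring in its entries, namely $g_k,Qg_k$ ($m\le k\le q-2$) and $g_{q-1}^n,Qg_{q-1}^n$ ($n\in\N$). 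Collecting the diagonal contributions gives $\Tr(\mc L^{\text{fast}}_{s,\chi})^n=\sum_{h\in P_n}\Tr(\chi(h)\tau_s(h))$. The combinatorial input needed here — that the generated semigroup is free and that each hyperbolic-type conjugacy class $[g]$ in $\wt\Gamma_\lambda$ has exactly $p(g)$ regular representatives of word length $w(g)$, with $n(g)=w(g)/p(g)$ — is the bookkeeping carried out for the billiard flow in \cite{Pohl_hecke_spectral}; since $q$ is odd, the half-weight correction of Section~\ref{billiard} does not arise.

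Next I would establish the analogue of Lemma~\ref{traces} for an arbitrary $h\in\wt\Gamma_\lambda$ of hyperbolic type, i.e.\ with two distinct real fixed points (automatic when $\det h=-1$): if $h$ acts on a bounded open disc $\mc C\subseteq\C$ with $h^{-1}.\overline{\mc C}\subseteq\mc C$ and a single fixed point $z_0$ in $\mc C$, then
\[
 \Tr\big(\chi(h)\tau^V_s(h)\big)=\frac{N(h)^{-s}}{1-\det(h)\,N(h)^{-1}}\,\tr\chi(h).
\]
Writing $\mu$ for the eigenvalue of a lift of $h$ of larger absolute value (normalised so that $\det h=\pm1$), the eigenvalues are $\mu$ and $\det(h)\mu^{-1}$, so the $h^{-1}$-multiplier at the fixed point lying in $\mc C$ equals $\det(h)\mu^{-2}=\det(h)N(h)^{-1}$, of absolute value $<1$. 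Since $(h^{-1})'(z_0)=\det(h^{-1})(cz_0+d)^{-2}$ with $\textbmat{a}{b}{c}{d}=h^{-1}$ and $\det(h^{-1})=\det(h)$, this forces $(cz_0+d)^{-2}=N(h)^{-1}$, hence $j_s(h^{-1},z_0)=\big(|\det h^{-1}|(cz_0+d)^{-2}\big)^s=N(h)^{-s}$, with no branch ambiguity since $N(h)>1$ is a positive real. Ruelle's trace formula for weighted composition operators on $B(\mc C;\C)$ (as used for Lemma~\ref{traces}, cf.\ \cite{Ruelle_zeta}) then gives $\Tr\tau^\C_s(h)=j_s(h^{-1},z_0)/\big(1-(h^{-1})'(z_0)\big)$, and diagonalising the unitary $\chi(h)$ on an orthonormal basis of $V$ exactly as in the proof of Lemma~\ref{traces} produces the factor $\tr\chi(h)$. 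This specialises to Lemma~\ref{traces} when $\det h=1$.

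With the trace lemma in hand I would close the argument by matching logarithms for $\Rea s>1$. On the transfer operator side, combining the displays above with the word-length bijection, exactly as in the proof of Theorem~\ref{ZetaFredholm}, collapses $-\sum_n\tfrac1n\sum_{h\in P_n}\Tr(\chi(h)\tau_s(h))$ into $-\sum_{[g]\in[\wt\Gamma_\lambda]_h}\tfrac{1}{n(g)}\tfrac{N(g)^{-s}}{1-\det(g)N(g)^{-1}}\tr\chi(g)$. On the zeta side I would expand each factor $\ln\det(1-\chi(g)\det(g)^kN(g)^{-(s+k)})$ as $-\sum_{\ell\ge1}\tfrac1\ell\det(g)^{k\ell}N(g)^{-(s+k)\ell}\tr\chi(g^\ell)$, sum the geometric series in $k$ — which converges because $|\det(g)^\ell N(g)^{-\ell}|=N(g)^{-\ell}<1$, and which is precisely the step absorbing the factor $\det(g)^k$ into $\big(1-\det(g)^\ell N(g)^{-\ell}\big)^{-1}$ — and use $\det(g)^\ell=\det(g^\ell)$, $N(g)^\ell=N(g^\ell)$, $\chi(g)^\ell=\chi(g^\ell)$ together with the grouping of primitive classes and their powers into all of $[\wt\Gamma_\lambda]_h$ to obtain the same expression. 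Exponentiating yields the claimed identity; and since $\det(g)^k=(-1)^k$ precisely on the glide-reflection classes, the right-hand side is the dynamical zeta function of the billiard flow on $\wt\Gamma_\lambda\backslash\h$ twisted by $\chi$.

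The main obstacle I anticipate is not any of these formal manipulations but the verification that the symbolic dynamics of the billiard flow genuinely supplies the combinatorial input used in the first step — freeness of the generating semigroup and the exact count of regular representatives per conjugacy class — now in the presence of the $Q$-twisted letters; this is what legitimises the passage from traces of powers of $\mc L^{\text{fast}}_{s,\chi}$ to sums over conjugacy classes, and it is the point where one must rely on the detailed constructions of \cite{Pohl_hecke_spectral} (and \cite{Pohl_hecke_infinite} for the non-cofinite analogues).
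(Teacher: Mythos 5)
The paper's own ``proof'' of this theorem is a single sentence delegating everything to \cite{Pohl_hecke_spectral, Pohl_hecke_infinite}, so your proposal actually supplies more mathematical content than the paper does, at least for part (iv). Your route for (iv) is the intended one -- rerun the argument of Theorem~\ref{ZetaFredholm}\eqref{zeta1} for the billiard coding -- and the one genuinely new computation you isolate, the extension of Lemma~\ref{traces} to orientation-reversing $h$, is carried out correctly: with $N(h)=N(h^2)^{1/2}=\mu^2$ the multiplier of $h^{-1}$ at its attracting fixed point is $\det(h)N(h)^{-1}$, Ruelle's formula gives $\Tr\big(\chi(h)\tau_s^V(h)\big)=N(h)^{-s}\big(1-\det(h)N(h)^{-1}\big)^{-1}\tr\chi(h)$, and resumming the geometric series in $k$ on the zeta side reproduces exactly this denominator together with the factor $\det(g)^k$ in the Euler product. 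Your closing remark is also well placed: the only non-formal input is the freeness of the generated semigroup and the count of regular representatives per conjugacy class for the $Q$-twisted alphabet, which is precisely what both you and the paper must import from \cite{Pohl_hecke_spectral}.

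The gap is one of coverage rather than of correctness: the theorem has four parts and you prove only the fourth, taking part (iii) as given (you invoke its nuclearity statement to define the Fredholm determinant) and saying nothing about (i) and (ii). Parts (i) and the first half of (iii) are essentially a matter of reading the weighted submaps off the billiard cross section \eqref{billcross} and inducing on the parabolic/elliptic branches, and the meromorphic continuation in (iii) is a rerun of Theorem~\ref{TO1}\eqref{TOextension} with the Lerch/Hurwitz zeta argument; but part (ii), the isomorphism between even/odd Maass cusp forms and the highly regular $1$-eigenfunctions of $\mc L^{\text{slow}}_{s,\chi}$ for $\chi(Q)=\pm1$, is a substantive statement resting on the period-function correspondence of \cite{Moeller_Pohl} and its equivariant refinement in \cite{Pohl_hecke_spectral}, and it does not follow from anything in your argument. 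If you intend the proposal to prove the theorem as stated, you need at least to say explicitly that (i)--(iii) are obtained by the adaptations just described, and to cite the Dirichlet/Neumann separation of \cite{Pohl_hecke_spectral} for (ii).
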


\begin{proof}
These statements (and also the corresponding statements for the other Hecke triangle groups) are easily adapted from \cite{Pohl_hecke_spectral, Pohl_hecke_infinite}.
\end{proof}

In Theorem~\ref{billTO}\eqref{sonder} we see that this specific choice of representation mimicks Dirichlet ($\chi(Q) = 1$) respectively Neumann ($\chi(Q) = -1$) boundary conditions. The same interpretation also holds for the fast transfer operators and their dynamical zeta functions, for which we refer to \cite{Pohl_hecke_spectral}. We remark that these results allow us to reformulate the Phillips--Sarnak conjecture on the non-existence of even Maass cusp forms in terms of non-existence of non-trivial highly regular $1$-eigenfunctions of $\mc L^{\text{slow}}_{s,\chi}$ and $\mc L^{\text{fast}}_{s,\chi}$.

It would be interesting to see if there is a similar interpretation for more general representations, in particular if $\dim V > 1$.

\bibliography{Pohl_ap_bib}
\bibliographystyle{amsplain}
\end{document}